\DeclareMathOperator{\sign}{sgn}
\DeclareMathOperator{\supp}{supp}
\DeclareMathOperator{\diam}{diam}
\DeclareMathOperator{\dist}{dist}
\DeclareMathOperator{\meas}{meas}
\begin{document}
\newcommand{\done}[2]{\dfrac{d {#1}}{d {#2}}}
\newcommand{\donet}[2]{\frac{d {#1}}{d {#2}}}
\newcommand{\pdone}[2]{\dfrac{\partial {#1}}{\partial {#2}}}
\newcommand{\pdonet}[2]{\frac{\partial {#1}}{\partial {#2}}}
\newcommand{\pdonetext}[2]{\partial {#1}/\partial {#2}}
\newcommand{\pdtwo}[2]{\dfrac{\partial^2 {#1}}{\partial {#2}^2}}
\newcommand{\pdtwot}[2]{\frac{\partial^2 {#1}}{\partial {#2}^2}}
\newcommand{\pdtwomix}[3]{\dfrac{\partial^2 {#1}}{\partial {#2}\partial {#3}}}
\newcommand{\pdtwomixt}[3]{\frac{\partial^2 {#1}}{\partial {#2}\partial {#3}}}
\newcommand{\bs}[1]{\mathbf{#1}}
\newcommand{\bx}{\mathbf{x}}
\newcommand{\by}{\mathbf{y}}
\newcommand{\bd}{\mathbf{d}} 
\newcommand{\bn}{\mathbf{n}} 
\newcommand{\bP}{\mathbf{P}} 
\newcommand{\bp}{\mathbf{p}} 
\newcommand{\ol}[1]{\overline{#1}}
\newcommand{\rf}[1]{(\ref{#1})}
\newcommand{\xt}{\mathbf{x},t}
\newcommand{\hs}[1]{\hspace{#1mm}}
\newcommand{\vs}[1]{\vspace{#1mm}}
\newcommand{\eps}{\varepsilon}
\newcommand{\ord}[1]{\mathcal{O}\left(#1\right)} 
\newcommand{\oord}[1]{o\left(#1\right)}
\newcommand{\Ord}[1]{\Theta\left(#1\right)}
\newcommand{\PhiF}{\Phi_{\rm freq}}
\newcommand{\real}[1]{{\rm Re}\left[#1\right]} 
\newcommand{\im}[1]{{\rm Im}\left[#1\right]}
\newcommand{\hsnorm}[1]{||#1||_{H^{s}(\bs{R})}}
\newcommand{\hnorm}[1]{||#1||_{\tilde{H}^{-1/2}((0,1))}}
\newcommand{\norm}[2]{\left\|#1\right\|_{#2}}
\newcommand{\normt}[2]{\|#1\|_{#2}}
\newcommand{\on}[1]{\Vert{#1} \Vert_{1}}
\newcommand{\tn}[1]{\Vert{#1} \Vert_{2}}
\newcommand{\ts}{\tilde{s}}
\newcommand{\tGamma}{{\tilde{\Gamma}}}
\newcommand{\darg}[1]{\left|{\rm arg}\left[ #1 \right]\right|}
\newcommand{\bnabla}{\boldsymbol{\nabla}}
\newcommand{\dive}{\boldsymbol{\nabla}\cdot}
\newcommand{\curl}{\boldsymbol{\nabla}\times}
\newcommand{\Phixy}{\Phi(\bx,\by)}
\newcommand{\PhiOxy}{\Phi_0(\bx,\by)}
\newcommand{\dxPhixy}{\pdone{\Phi}{n(\bx)}(\bx,\by)}
\newcommand{\dyPhixy}{\pdone{\Phi}{n(\by)}(\bx,\by)}
\newcommand{\dxPhiOxy}{\pdone{\Phi_0}{n(\bx)}(\bx,\by)}
\newcommand{\dyPhiOxy}{\pdone{\Phi_0}{n(\by)}(\bx,\by)}

\newcommand{\rd}{\mathrm{d}}
\newcommand{\R}{\mathbb{R}}
\newcommand{\N}{\mathbb{N}}
\newcommand{\Z}{\mathbb{Z}}
\newcommand{\C}{\mathbb{C}}
\newcommand{\K}{{\mathbb{K}}}
\newcommand{\ri}{{\mathrm{i}}}
\newcommand{\re}{{\mathrm{e}}} 

\newcommand{\cA}{\mathcal{A}}
\newcommand{\cC}{\mathcal{C}}
\newcommand{\cS}{\mathcal{S}}
\newcommand{\cD}{\mathcal{D}}
\newcommand{\cone}{{c_{j}^\pm}}
\newcommand{\ctwo}{{c_{2,j}^\pm}}
\newcommand{\cthree}{{c_{3,j}^\pm}}

\newtheorem{thm}{Theorem}[section]
\newtheorem{lem}[thm]{Lemma}
\newtheorem{defn}[thm]{Definition}
\newtheorem{prop}[thm]{Proposition}
\newtheorem{cor}[thm]{Corollary}
\newtheorem{rem}[thm]{Remark}
\newtheorem{conj}[thm]{Conjecture}
\newtheorem{ass}[thm]{Assumption}
\newcommand{\tk}{k}
\newcommand{\tbx}{\tilde{\bx}}
\newcommand{\tby}{\tilde{\by}}	%
\newcommand{\tbd}{\tilde{\bd}}
\newcommand{\txi}{\xi}
\newcommand{\bxi}{\boldsymbol{\xi}}
\newcommand{\boldeta}{\boldsymbol{\eta}}	%
\newcommand{\sD}{\mathsf{D}}
\newcommand{\sN}{\mathsf{N}}
\newcommand{\sE}{\mathsf{E}}
\newcommand{\sS}{\mathsf{S}}
\newcommand{\sT}{\mathsf{T}}
\newcommand{\sH}{\mathsf{H}}
\newcommand{\sI}{\mathsf{I}}
\renewcommand{\cA}{A}
\newcommand{\cB}{\mathcal{B}}
\newcommand{\cH}{\mathcal{H}}
\newcommand{\cI}{\mathcal{I}}
\newcommand{\cItilde}{\tilde{\mathcal{I}}}
\newcommand{\cIhat}{\hat{\mathcal{I}}}
\newcommand{\cIcheck}{\check{\mathcal{I}}}
\newcommand{\cIstar}{{\mathcal{I}^*}}
\newcommand{\cJ}{\mathcal{J}}
\newcommand{\cM}{\mathcal{M}}
\newcommand{\cT}{\mathcal{T}}
\newcommand{\scrD}{\mathscr{D}}
\newcommand{\scrS}{\mathscr{S}}
\newcommand{\scrJ}{\mathscr{J}}
\newcommand{\tH}{\tilde{H}}
\newcommand{\Hze}{H_{\rm ze}}
\newcommand{\dimH}{{\rm dim_H}}
\newcommand{\dudnjump}{\left[ \pdone{u}{\bn}\right]}
\newcommand{\dudnjumptext}{[ \pdonetext{u}{\bn}]}
\newcommand{\sumpm}[1]{\{\!\!\{#1\}\!\!\}}
\newtheorem{example}[thm]{Example}
\title{Acoustic scattering by fractal screens: mathematical formulations and wavenumber-explicit continuity and coercivity estimates}
\author{S.\ N.\ Chandler-Wilde\footnotemark[1], D.\ P.\ Hewett\footnotemark[1] \footnotemark[2]}

\renewcommand{\thefootnote}{\fnsymbol{footnote}}
\footnotetext[1]{Department of Mathematics and Statistics, University of Reading, Whiteknights PO Box 220, Reading RG6 6AX, UK. This work was supported by EPSRC grant EP/F067798/1.}

\footnotetext[2]{Current address: Mathematical Institute, University of Oxford, Radcliffe Observatory Quarter, Woodstock Road, Oxford, OX2 6GG, UK. Email: \texttt{hewett@maths.ox.ac.uk}}

\date{Originally posted 14 Jan 2014, this corrected version 19 Mar 2015}
\maketitle
\renewcommand{\thefootnote}{\arabic{footnote}}
\begin{abstract}
We consider time-harmonic acoustic scattering by planar sound-soft (Dirichlet) and sound-hard (Neumann) screens. In contrast to previous studies, in which the domain occupied by the screen is assumed to be Lipschitz or smoother, we consider screens occupying an arbitrary bounded open set in the plane. Thus our study includes cases where the closure of the domain occupied by the screen has larger planar Lebesgue measure than the screen, as can happen, for example, when the screen has a fractal boundary. We show how to formulate well-posed boundary value problems for such scattering problems, our arguments depending on results on the coercivity of the acoustic single-layer and hypersingular boundary integral operators, and on properties of Sobolev spaces on general open sets which appear to be new. Our analysis teases out the explicit wavenumber dependence of the continuity and coercivity constants of the boundary integral operators, viewed as mappings between fractional Sobolev spaces, this in part extending previous results of Ha-Duong \cite{Ha-Du:90,Ha-Du:92}. 
We also consider the complementary problem of propagation through a bounded aperture in an infinite planar screen.
\end{abstract}
\section{Introduction}
\label{sec:Intro}
This paper concerns the mathematical analysis of classical time-harmonic acoustic scattering problems modelled by the Helmholtz equation
\begin{equation} \label {eq:he}
\Delta u + k^2 u = 0,
\end{equation}
where $k>0$ is the {\em wavenumber}. 
Our focus is on scattering by a thin planar screen occupying some bounded and relatively open set $\Gamma \subset \Gamma_\infty:=\{\bx=(x_1,...,x_n)\in \R^n:x_n=0\}$ (we assume throughout 
that $n=2$ or $3$), with \eqref{eq:he} assumed to hold in the domain $D:=\R^n\setminus \overline{\Gamma}$. 
We consider both the Dirichlet (sound-soft) and Neumann (sound-hard) boundary value problems (BVPs), which we state below. The function space notation, and the precise sense in which the boundary conditions are to be understood, will be explained in \S\ref{sec:SobolevSpaces} and \S\ref{ScatProb}.
\begin{defn}[Problem $\sD$]
\label{def:SPD}
Given $g_{\sD}\in H^{1/2}(\Gamma)$, find $u\in C^2\left(D\right)\cap  W^1_{\mathrm{loc}}(D)$ such that
\begin{eqnarray}
 & \Delta u+k^2u  =  0, \quad \mbox{in }D, & \label{eqn:HE1} \\
 & u  = g_{\sD}, \quad \mbox{on }\Gamma,&\label{eqn:bc1}
\end{eqnarray}
and $u$ satisfies the Sommerfeld radiation condition at infinity. %
\end{defn}
\begin{defn}[Problem $\sN$]
\label{def:SPN}
Given $g_{\sN}\in H^{-1/2}(\Gamma)$, find $u\in C^2\left(D\right)\cap W^1_{\mathrm{loc}}(D)$ such that
\begin{eqnarray}
 & \Delta u+k^2u  =  0, \quad \mbox{in }D, & \label{eqn:HE2} \\
 & \pdone{u}{\bn}  = g_{\sN}, \quad \mbox{on }\Gamma,& \label{eqn:bc2}%
\end{eqnarray}
and $u$ satisfies the Sommerfeld radiation condition at infinity. %
\end{defn}

\begin{example}
\label{ex:scattering}
Consider the problem of scattering by $\Gamma$ of an incident 
superposition of plane waves
\begin{align}
\label{uiDef}
    u^i(\bx)&:=\sum_{j=1}^N a_j \re^{\ri  k \bx\cdot \bd_j}, \qquad \bx\in\mathbb{R}^n,%
\end{align}
where 
$N\in\N$ and 
$\bd_j\in\mathbb{R}^n$, $j=1,\ldots,N$, are unit direction vectors. 
A `sound-soft' and a `sound-hard' screen are modelled respectively by problem $\sD$ (with $g_{\sD}=-u^i|_\Gamma$) and problem $\sN$ (with $g_{\sN}=-\pdonetext{u^i}{\bn}|_\Gamma$). In both cases $u$ represents the scattered field, the total field being given by $u^i+u$.
\end{example}

Such scattering problems have been well-studied, both theoretically \cite{StWe84,Ste:87,WeSt:90,Ha-Du:90,Ha-Du:92} and in applications \cite{DLaWroPowMan:98,DavisChew08}. However, it would appear that all previous studies assume that 
$\Gamma\subset \Gamma_\infty$ 
is at least a Lipschitz relatively open set (in the sense of \cite{McLean}), and most that $\Gamma$ is substantially smoother. In these cases it is well known (see, e.g., \cite{StWe84,Ste:87,WeSt:90}) that problems $\sD$ and $\sN$ are uniquely solvable for all $g_{\sD}\in H^{1/2}(\Gamma)$ and $g_{\sN}\in H^{-1/2}(\Gamma)$. However, this unique solvability does not hold for general non-Lipschitz screens, as we shall see.

\subsection{Main results and outline of the paper}

The current paper makes two novel contributions to the study of this problem: 
\begin{enumerate}[(i)]
\item we show how problems $\sD$ and $\sN$ must be modified so that they are uniquely solvable when $\Gamma$ is an arbitrary (in particular, fractal) bounded relatively open subset of $\Gamma_\infty$; 
\item we present new wavenumber-explicit continuity and coercivity estimates on the associated boundary integral operators (the acoustic single-layer and hypersingular operators).
\end{enumerate}

We give some motivation for and further explanation of objectives (i) and (ii) in \S\ref{sec:Motivation} below. But first we provide a brief overview of the structure of the paper.

Our approach follows previous studies (e.g.\ \cite{StWe84,Ste:87,WeSt:90}) in that it is based on the classical direct integral equation method, in which Green's theorem is used to reformulate the BVPs in the propagation domain as boundary integral equations (BIEs) on the screen. Specifically, the sound-soft BVP leads to the single-layer BIE, and the sound-hard BVP to the hypersingular BIE. To determine the solvability of these BIEs one needs to study the associated boundary integral operators (BIOs) as mappings between certain fractional Sobolev spaces defined on the screen. But while these Sobolev spaces are well-studied for Lipschitz screens, their properties for general non-Lipschitz screens do not seem to have been widely studied. We therefore begin in \S\ref{sec:SobolevSpaces} by carefully documenting some of the properties of Sobolev spaces on an arbitrary open subset $\Omega$ of $\R^n$. Of particular importance to the current study are the following two issues, which can arise when $\Omega$ is non-Lipschitz: firstly, the boundary of $\Omega$ may support non-zero distributions in $H^s(\R^n)$ for $s$ in the range relevant for BIE formulations (see \S\ref{sec:Polarity}); and secondly, it is not in general possible to approximate an arbitrary $H^s(\R^n)$ distribution supported in the closure of $\Omega$ by a sequence of smooth functions compactly supported inside $\Omega$ (see \S\ref{sec:EqualityOfSpaces}). Dealing with these two issues will prove to be crucial when it comes to correctly formulating the screen scattering problems for arbitrary screens (objective (i) above), which we do in \S\ref{ScatProb}. 

While we allow our screen to have an arbitrarily rough boundary, our proofs that the modified BVPs we propose are well-posed make extensive use of the fact that the screen is planar. This planarity means that Sobolev spaces on the screen can be defined concretely for all orders of Sobolev regularity in terms of Fourier transforms (more precisely, via Bessel potentials). 
Furthermore, it allows us to write explicit representations for the relevant layer potentials and BIOs in terms of Fourier transforms. These representations, which we present in \S\ref{sec:FourierRep}, make it possible to prove that the BIOs are coercive. They also facilitate our wavenumber-explicit analysis, presented in \S\ref{sec:SkAnalysis} and \S\ref{sec:TkAnalysis}, of the associated continuity and coercivity constants. 
In this respect our wavenumber-explicit analysis builds on and generalises that carried out using similar arguments by Ha-Duong in \cite{Ha-Du:90,Ha-Du:92}\footnote{
We are grateful to M.\ Costabel for drawing references \cite{Ha-Du:90,Ha-Du:92} to our attention.}, 
although the results presented in \cite{Ha-Du:90} are valid only for complex wavenumbers, and the results for the hypersingular operator presented in \cite{Ha-Du:92}, while valid for real wavenumbers, are not as sharp in their wavenumber dependence as the results we obtain (see \S\ref{sec:TkAnalysis} for a more detailed discussion). 
We remark that while coercivity was proved for the hypersingular BIO in \cite{Ha-Du:92}, to the best of our knowledge a proof of coercivity for the single-layer BIO does not seem to have been published before; the coercivity result is stated without proof in \cite[Prop.\ 2.3]{Co:04}, with a reference to \cite{Ha-Du:90}, but in \cite{Ha-Du:90} the result is only proved for complex wavenumber, the real case being mentioned only in passing (see \cite[p.\ 502]{Ha-Du:90}). In fact, as we shall see, the proof of coercivity for the single-layer BIO for real wavenumber is actually much more straightforward than that for the hypersingular BIO.

In \S\ref{sec:NormEstimates} we collect some useful norm estimates in the space $H^s(\Gamma)$, of relevance in the numerical analysis of Galerkin boundary element methods (BEMs) based on our integral equation formulation of the sound-soft screen problem (see e.g.\ \cite{ScreenBEM} for an application of these results). Finally, in \S\ref{ScatProbHole} we present well-posed formulations of the complementary problems of scattering by unbounded sound-soft or sound-hard screens with bounded apertures. The analysis is similar to that of the screen problem, but with some small technical differences. We note that for the aperture problem the sound-soft BVP gives rise to the hypersingular BIE and the sound-hard BVP gives rise to the single-layer BIE.

We remark that some of the results on the BVP and BIE formulations in this paper were stated without proof in the conference paper \cite{HeCh:13}. Further results concerning Sobolev spaces on non-Lipschitz domains will be presented in the future publication \cite{ChaHewMoi:13}. A BIE formulation for the case where $\Gamma$ is an arbitrary relatively \emph{closed} subset of $\Gamma_\infty$ is presented in \cite{Ch:13}, along with an application to screens defined in terms of the ``Cantor dust'' fractal.

\subsection{Motivation and background}
\label{sec:Motivation}

We end this introduction 
by giving some motivation for objectives (i) and (ii) above.

Objective (i) above is motivated in part by the possible application to the design and simulation of antennas for electromagnetic wave transmission/reception whose geometry is based on fractal subsets of the plane (see e.g.\ \cite{PBaRomPouCar:98,SriRanKri:11}). A key property of fractals is the fact that they possess structure on every lengthscale - this has been exploited to create antennas which can transmit/receive efficiently over a broad range of frequencies simultaneously. Although this seems to be a mature engineering technology, as far as we are aware no analytical framework is currently available for such problems. Of course, quantitative modelling of such problems would involve a study of the full electromagnetic wave scattering problem, but the acoustic case considered in the current paper represents a first step in this direction.

Objective (ii) forms part of a wider effort in the rigorous mathematical analysis of BIE methods for high frequency acoustic scattering problems (for a recent review of this area see e.g.\ \cite{ChGrLaSp:11}). Typically (and this is the approach adopted in the current paper) one reformulates the scattering problem as an integral equation, which can be written in operator form as
\begin{equation} \label{eq:ieab}
\cA \phi = f,
\end{equation}
where $\phi$ and $f$ are complex-valued functions defined on the boundary $\Gamma$ of the scatterer. A standard and appropriate functional analysis framework
is that the solution $\phi$ is sought in some Hilbert space $V$, with $f\in V^*$, the dual space of $V$ (the space of continuous antilinear functionals), and $\cA:V\to V^*$ a bounded linear boundary integral operator.\footnote{A concrete example is the standard Brakhage-Werner formulation \cite{BraWer:65,ChGrLaSp:11} of sound-soft acoustic scattering by a bounded, Lipschitz obstacle, in which case $V=V^* = L^2(\Gamma)$, and
\begin{equation} \label{eq:ABW}
\cA = \frac{1}{2}I + D_k -\ri \eta S_k,
\end{equation}
with $S_k$ and $D_k$ the standard acoustic single- and double-layer BIOs, $I$ the identity operator, and $\eta\in \R\setminus\{0\}$ a coupling parameter.}
Equation \rf{eq:ieab} can be restated in weak (or variational) form as
\begin{align}
\label{WeakForm}
a(\phi, \varphi) = f(\varphi), \quad \textrm{for all } \varphi\in V,
\end{align}
in terms of the sesquilinear form
\begin{align*}
\label{}
a(\phi, \varphi):=(\cA \phi)(\varphi), \quad \phi,\varphi\in V.
\end{align*}
The Galerkin method for approximating \rf{WeakForm} is to seek a solution $\phi_N\in V_N\subset V$, where $V_N$ is a finite-dimensional subspace, requiring that
\begin{align}
\label{WeakFormGalerkin}
a(\phi_N, \varphi_N) = f(\varphi_N), \quad \textrm{for all } \varphi_N\in V_N.
\end{align}

The sesquilinear form $a$ is clearly bounded with continuity constant equal to $\|\cA\|_{V\to V^*} $.
We say that $a$ (and the associated bounded linear operator $\cA$) is \emph{coercive} if, for some $\gamma>0$ (called the \emph{coercivity constant}), it holds that
\begin{align*}
\label{}
|a(\phi,\phi)|\geq \gamma\|\phi\|_{V}^2, \quad \textrm{for all } \phi\in V.
\end{align*}
In this case, the Lax-Milgram lemma implies that \rf{WeakForm} (and hence \rf{eq:ieab}) has exactly one solution $\phi\in V$, and that $\|\phi\|_{V} \leq \gamma^{-1} \|f\|_{V^*}$, i.e.\ $\|A^{-1}\|_{V^*\to V} \leq \gamma^{-1}$.
Furthermore, by Cea's lemma, the existence and uniqueness of the Galerkin solution $\phi_N$ of \rf{WeakFormGalerkin} is then also guaranteed for any finite-dimensional approximation space $V_N$, and there holds the quasi-optimality estimate
\begin{align}
\label{QuasiOpt}
\|\phi -\phi_N\|_{V} \leq \frac{\|\cA\|_{V\to V^*}}{\gamma} \inf_{\varphi_N\in V_N} \| \phi- \varphi_N\|_{V}.
\end{align}

One major thrust of recent work (for a review see \cite{ChGrLaSp:11}) has been to attempt to prove wavenumber-explicit continuity and coercivity estimates for BIE formulations of scattering problems, which, by the above discussion, lead to wavenumber-explicit bounds on the condition number $\|A\|_{V\to V^*}\|A^{-1}\|_{V^*\to V}$ and the quasi-optimality constant $\gamma^{-1}\|\cA\|_{V\to V^*}$.
\footnote{Such estimates have recently been proved \cite{SpSm:11} for the operator \rf{eq:ABW} for the case where the scatterer is strictly convex and $\Gamma$ is sufficiently smooth. We also note that in \cite{SCWGS11} a new formulation for sound-soft acoustic scattering, the so-called `star-combined' formulation, has been shown to be coercive on $L^2(\Gamma)$ for all star-like Lipschitz scatterers.}
This effort is motivated by the fact that these problems are computationally challenging when the wavenumber $k>0$ (proportional to the frequency) is large, and that such wavenumber-explicit estimates are useful for answering certain key numerical analysis questions, for instance:
\begin{enumerate}[(a)]
\item Understanding the behaviour of iterative solvers (combined with matrix compression techniques such as the fast multipole method) at high frequencies, in particular understanding the dependence of iteration counts on parameters related to the wavenumber. This behaviour depends, to a crude first approximation, on the condition number of the associated matrices, which is in part related to the wavenumber dependence of the norms of the BIOs and their inverses at the continous level (\cite{BeChGrLaLi:11,SpSm:11}).
\item Understanding the accuracy of conventional BEMs (based on piecewise polynomial approxiomation spaces) at high frequencies by undertaking a rigorous numerical analysis which teases out the joint dependence of the error on the number of degrees of freedom and the wavenumber $k$. For example, is it enough to increase the degrees of freedom in proportion to $k^{d-1}$ in order to maintain accuracy, maintaining a fixed number of degrees of freedom per wavelength in each coordinate direction? See e.g.\ \cite{LohMel:11,GrLoMeSp:13} for some recent results in this area.
\item Developing, and justifying by a complete numerical analysis, novel BEMs for high frequency scattering problems based on the so-called `hybrid numerical-asymptotic' (HNA) approach, the idea of which is to use an approximation space enriched with oscillatory basis functions, carefully chosen to capture the high frequency solution behaviour. The aim is to develop algorithms for which the number of degrees of freedom $N$ required to achieve any desired accuracy be fixed or increase only very mildly as $k\to\infty$. This aim is provably achieved in certain cases, mainly 2D so far; see, e.g., \cite{DHSLMM,NonConvex} and the recent review \cite{ChGrLaSp:11}. For 2D screen and aperture problems we recently proposed in \cite{ScreenBEM} an HNA BEM which provably achieves a fixed accuracy of approximation with $N$ growing at worst like $\log^2{k}$ as $k\to\infty$, our numerical analysis using the wavenumber-explicit continuity and coercivity estimates of the current paper. 
Numerical experiments demonstrating the effectiveness of HNA approximation spaces for a 3D screen problem have been presented in \cite[\S7.6]{ChGrLaSp:11}.
\end{enumerate}

\section{Sobolev spaces}
\label{sec:SobolevSpaces}
We now define the Sobolev spaces that we will use throughout. Our analysis is mostly in the context of the Bessel potential spaces $H^s(\R^n)$ for $s\in\R$, defined in \S\ref{ss:SobolevDef} below. In line with other analyses of high frequency scattering we use a wavenumber-dependent norm on $H^s(\R^n)$ which is equivalent to the standard norm, but allows easier derivation of wavenumber-explicit estimates. Following \cite{McLean}, we will define three types of Sobolev space on non-empty open subsets $\Omega$ of $\R^n$, namely $H^s(\Omega)$, $\tilde H^s(\Omega)$, and $H^s_{\overline{\Omega}}$. We will show in \S\ref{ss:DualAnnihIntp} that the duality relation $(\tilde H^s(\Omega))^* = H^{-s}(\Omega)$ (well-known for Lipschitz $\Omega$) holds for arbitrary open $\Omega$. 
In \S\ref{sec:Polarity} we introduce the concept of nullity, an indicator of `negligibility' of a subset of $\R^n$ in terms of Sobolev regularity which, as we shall see, is intimately related to a particular variant of the concept of \emph{capacity}. 
This concept of nullity will allow us to describe in \S\ref{sec:EqualityOfSpaces} conditions under which Sobolev spaces defined on different domains coincide. 
Finally, in \S\ref{sec:3spaces} we highlight the fact that, while the spaces $\tilde H^s(\Omega)$ and $H^s_{\overline{\Omega}}$ are known to coincide when $\Omega$ is sufficiently smooth (e.g. Lipschitz), these spaces are in general distinct. Taking account of this distinction will be crucial when we formulate well-posed boundary value problems for scattering by arbitrary screens in \S\ref{ScatProb}.

\subsection{Definitions}\label{ss:SobolevDef}
Given $n\in \N$, let $\scrD(\R^n)$ denote the space of compactly supported smooth test functions on $\R^n$, and for any open set $\Omega\subset \R^n$ let
$$
\scrD(\Omega):=\{u\in\scrD(\R^n):\supp{U}\subset\Omega\} \textrm{ and } \scrD(\overline\Omega):=\{u\in C^\infty(\Omega):u=U|_\Omega \textrm{ for some }U\in\scrD(\R^n)\}.
$$
For $\Omega\subset \R^n$ let $\scrD^*(\Omega)$ denote the space of distributions on $\Omega$, which we take as the space of anti-linear continuous functionals on $\mathscr{D}(\Omega)$. With $L^1_{\rm loc}(\Omega)$ denoting the space of locally integrable functions on $\Omega$, the standard embedding $\iota_\Omega:L^1_{\rm loc}(\Omega)\to \scrD^*(\Omega)$ is given by $\iota_\Omega u(v):=\int_\Omega u \overline{v}$ for $u\in L^1_{\rm loc}(\Omega)$ and $v\in \mathscr{D}(\Omega)$.
Let $\mathscr{S}(\R^n)$ denote the Schwartz space of rapidly decaying smooth test functions on $\R^n$, and let $\mathscr{S}^*(\R^n)$ denote the dual space of tempered distributions, which we take as the space of anti-linear continuous functionals on $\mathscr{S}(\R^n)$.
Since the inclusion $\mathscr{D}(\R^n)\subset \mathscr{S}(\R^n)$ is continuous with dense image, we have $\scrS^*(\R^n)\hookrightarrow \scrD^*(\R^n)$.
For $u\in \mathscr{S}(\R^n)$ we define the Fourier transform $\hat{u}={\cal F} u\in \mathscr{S}(\R^n)$ and $\check{u}={\cal F}^{-1} u\in \mathscr{S}(\R^n)$ by %
\begin{align}
\label{FTDef}
\hat{u}(\bxi):= \frac{1}{(2\pi)^{n/2}}\int_{\R^n}\re^{-\ri \bxi\cdot \bx}u(\bx)\,\rd \bx , \;\; \bxi\in\R^n, \quad
\check{u}(\bx) := \frac{1}{(2\pi)^{n/2}}\int_{\R^n}\re^{\ri \bxi\cdot \bx}u(\bxi)\,\rd \bxi , \;\;\bx\in\R^n.
\end{align}
We define the ($k$-dependent) Bessel potential operator $\cJ_k^s$ on $\mathscr{S}(\R^n)$, for $s\in\R$ and $k>0$, by $\cJ_k^s := {\cal F}^{-1}\cM_k^s{\cal F}$, where $\cM_k^s$ is multiplication by $m_k^s(\bxi) := (k^2+|\bxi|^2)^{s/2}$. 
We extend these definitions to $\mathscr{S}^*(\R^n)$ in the usual way: for $u\in \mathscr{S}^*(\R^n)$ and $v\in \mathscr{S}(\R^n)$ we define
\begin{align}
\label{FTDistDef}
\hat{u}(v) := u(\check{v}),\quad
\check{u}(v) := u(\hat{v}),\quad \cM_k^su(v) := u(\cM_k^s v), \quad
(\cJ_k^s u)(v) := u(\cJ_k^s v),
\end{align}
and note that for $u\in \mathscr{S}^*(\R^n)$ it holds that $\widehat{\cJ_k^s u} = \cM_k^s\hat{u}$. The standard embedding $\iota:L^2(\R^n)\to \mathscr{S}^*(\R^n)$ is given by $\iota u(v) := \iota_{\R^n}u(v)=(u,v)_{L^2(\R^n)}$, for $u\in L^2(\R^n)$, $v\in\mathscr{S}(\R^n)$.

We define the Sobolev space $H^s(\R^n)\subset \mathscr{S}^*(\R^n)$ by
\begin{align*}
H^s(\R^n):=(\cJ_k^{s})^{-1}\,\iota\big(L^2(\R^n)\big)= \cJ_k^{-s}\,\,\iota\big(L^2(\R^n)\big) = \big\{u\in \mathscr{S}^*(\R^n): \cJ_k^s u \in \iota\big(L^2(\R^n)\big)\big\},
\end{align*}
which is a Hilbert space when equipped with the inner product
$\left(u,v\right)_{H^{s}_k(\R^n)}:=\left(\iota^{-1}\cJ_k^s u,\iota^{-1}\cJ_k^s v\right)_{L^2(\R^n)}$, which makes $\cJ_k^{-s}\,\iota:L^2(\R^n)\to H^s(\R^n)$ a unitary isomorphism.
If $u\in H^s(\R^n)$ then the Fourier transform $\hat u\in \scrS^*(\R^n)$ lies in $\iota_{\R^n}(L^1_{\rm loc}(\R^n))$; that is, $\hat u$ can be identified with a locally integrable function, namely $(\iota_{\R^n}^{-1}\hat u)(\bxi) = (k^2+|\bxi|^2)^{-s/2}(\iota^{-1}\cM_k^s \hat u)(\bxi)$ for $\bxi\in \R^n$. In a slight (and standard) abuse of notation we will write $\hat u(\bxi)$ in place of $(\iota_{\R^n}^{-1}\hat u)(\bxi)$.
With this convention, for $u,v\in H^s(\R^n)$, %
\begin{align} \label{eq:HsProdNorm}%
\left(u,v\right)_{H_k^{s}(\R^n)} = \int_{\R^n}(k^2+|\bxi|^2)^{s}\,\hat{u}(\bxi)\overline{\hat{v}(\bxi)}\,\rd \bxi, \quad
\norm{u}{H_k^{s}(\R^n)}^2 = \norm{\iota^{-1} \cJ_k^s u}{L^2(\R^n)}^2 = \int_{\R^n}(k^2+|\bxi|^2)^{s}|\hat{u}(\bxi)|^2\,\rd \bxi.
\end{align}
We emphasize that, for any $k>0$, $\norm{\cdot}{H_k^{s}(\R^n)}$ is equivalent to the standard norm on $H^s(\R^n)$, defined as $\norm{\cdot}{H^{s}(\R^n)}:= \norm{\cdot}{H_1^{s}(\R^n)}$. Explicitly, 
$m\norm{u}{H^{s}(\R^n)} \leq \norm{u}{H^{s}_k(\R^n)} \leq M  \norm{u}{H^{s}(\R^n)}$,
for $u \in H^s(\R^n)$, where $m:=\min\{1,k^s\}$, $M:=\max\{1,k^s\}$. 
For any $s,t\in \R$, the map $\cJ_k^{t}:H^{s}(\R^n)\to H^{s-t}(\R^n)$ is a unitary isomorphism with inverse $\cJ_k^{-t}$. 
Also, $\scrD(\R^n)$ (more precisely, $\iota(\scrD(\R^n))$) is a dense subset of $H^s(\R^n)$ for every $s\in \R$. Hence one can view $H^s(\R^n)$ either as the space of those tempered distributions $u$ whose (distributional) Fourier transform, $\hat u$, is locally integrable on $\R^n$ and satisfies $\|u\|_{H^s(\R^n)}<\infty$, or equivalently as the completion of $\scrD(\R^n)$ with respect to the norm $\|\cdot\|_{H^s(\R^n)}$.
We note for future reference, that, for any $\bx_0\in\R^n$, the Dirac delta function
\begin{equation}\label{eq:delta}
\delta_{\bx_0}\in H^{s}(\R^n)\qquad \text{if and only if}\qquad s<-n/2.
\end{equation}

Given a closed set $F\subset \R^n$, we define the Sobolev space $H^s_F$, for $s\in \R$, by
\begin{equation} \label{HsTdef}
H_F^s := \{u\in H^s(\R^n): \supp(u) \subset F\},
\end{equation}
where the support of a distribution is understood as in \cite{McLean}. Clearly $H_F^s$ is a closed subspace of $H^s(\R^n)$. 

There are a number of ways to define Sobolev spaces on $\Omega$ when $\Omega$ is a non-empty open subset of $\R^n$. 
First, we can consider the space $H^s_{\overline{\Omega}}$, defined as in \rf{HsTdef}. 
Second, we can consider the closure of $\iota(\scrD(\Omega))$ in the space $H^s(\R^n)$, denoted
\begin{equation*}
 \tilde{H}^s(\Omega):=\overline{\iota\big(\scrD(\Omega)\big)}^{H^s(\R^n)}.
\end{equation*}
By definition, $\tilde H^s(\Omega)$ is a closed subspace of $H^s(\R^n)$, and it is easy to see that $\tilde{H}^s(\Omega)\subset H^s_{\overline\Omega}$. 
Third, let
$$
H^s(\Omega):=\{u\in \scrD^*(\Omega): u=U|_\Omega \textrm{ for some }U\in H^s(\R^n)\},
$$
where $U|_\Omega$ denotes the restriction of the distribution $U$ to $\Omega$ in the sense defined in \cite{McLean}, with norm%
\begin{align*}
\|u\|_{H_k^{s}(\Omega)}:=\inf_{\substack{U\in H^s(\R^n)\\ U|_{\Omega}=u}}\normt{U}{H_k^{s}(\R^n)}.
\end{align*}
We note that the restriction operator $|_\Omega :(H^s_{\R^n\setminus\Omega})^\perp\to H^s(\Omega)$ is a unitary isomorphism (see \cite[p.\ 77]{McLean}); here $^\perp$ denotes the orthogonal complement in $H^s(\R^n)$. 
We also remark that $\scrD(\overline\Omega)$ (more precisely, $\iota_{\Omega}(\scrD(\overline\Omega))$) is a dense subset of $H^s(\Omega)$, since $\iota(\scrD(\R^n))$ is dense in $H^s(\R^n)$.

Although we will not require them for our analysis, for completeness (and to give an indication of the richness of the theory of Sobolev spaces on non-Lipschitz open sets) we mention two further ways to define Sobolev spaces on $\Omega$. First we recall the following classical subspace of $H^s(\Omega)$:
\begin{equation}\label{eq:Hs0}
H^s_0(\Omega):=
\overline{\Big(\iota\big(\scrD(\Omega)\big)\Big)\big|_\Omega}^{H^s(\Omega)}.
\end{equation}
We note that while $\tilde H^s(\Omega)$ and $H^s_0(\Omega)$ are both defined as closures of the space of the smooth functions compactly supported in $\Omega$, they have a different nature as the former is a subspace of $H^s(\R^n)\subset \scrS^*(\R^n)$ and the latter of $H^s(\Omega)\subset\scrD^*(\Omega)$. Second, for $s\geq 0$ another natural space to consider is the set of those $H^s(\R^n)$ functions which vanish almost everywhere outside $\Omega$,
$$
\mathring H^s(\Omega) := \big\{u\in H^s(\R^n): u= 0 \mbox{ a.e. in } \R^n\setminus \Omega\big\}
= \big\{u\in H^s(\R^n): \meas\big(\supp{u}\cap(\R^n\setminus \Omega)\big) = 0\big\}
$$
which, as for $\tilde H^s(\Omega)$ and $H^s_{\overline{\Omega}}$, we equip with the inner product and norm inherited from $H^s(\R^n)$. The space $\mathring H^s(\Omega)$ can obviously be mapped bijectively (using the restriction operator) onto the set of $H^s(\Omega)$ distributions whose extension by zero outside of $\Omega$ defines an element of $H^s(\R^n)$. 
A discussion of the relationship between all of these spaces, along with many other results on Sobolev spaces on general non-Lipschitz open sets, can be found in \cite{ChaHewMoi:13}. 

Sobolev spaces can also be defined, for $s\geq 0$, as subspaces of $L^2(\R^d)$ satisfying constraints on weak derivatives. In particular, given a non-empty open subset $\Omega$ of $\R^d$, let
$$
W^1(\Omega) := \{u\in L^2(\Omega): \nabla u \in L^2(\Omega)\},
$$
where $\nabla u$ is the weak gradient. Note that $W^1(\R^d)=H^1(\R^d)$ with
$$
\|u\|^2_{H^1_k(\R^d)} = \int_{\R^d} \left( |\nabla u(\bx)|^2 + k^2 |u(\bx)|^2\right) \rd \bx.
$$
 Further \cite[Theorem 3.30]{McLean}, $W^1(\Omega)= H^1(\Omega)$ whenever $\Omega$ is a Lipschitz open set, in the sense of, e.g., \cite{sauter-schwab11, ChGrLaSp:11}.   It is convenient to use the notation
$$
W^1_{\mathrm{loc}}(\Omega) := \{u\in L^2_{\mathrm{loc}}(\Omega): \nabla u \in L^2_{\mathrm{loc}}(\Omega)\},
$$
where $L^2_{\mathrm{loc}}(\Omega)$ denotes the set of locally integrable functions $u$ on $\Omega$ for which $\int_G|u(\bx)|^2 \rd \bx < \infty$ for every bounded measurable $G\subset \Omega$.
\subsection{Duality relations}\label{ss:DualAnnihIntp}
It is standard (see e.g.\ \cite{McLean}) that $H^{-s}(\R^n)$ is a natural realisation of $(H^s(\R^n))^*$, the dual space of bounded antilinear functionals on $H^s(\R^n)$. Explicitly, where $R:H^s(\R^n)\to (H^s(\R^n))^*$ is the Riesz isomorphism, the map  $\cI:=R \cJ^{2s}_k:H^{-s}(\R^n)\to (H^s(\R^n))^*$ is a unitary isomorphism. We have
$$
\cI u(v) = \left\langle u, v \right\rangle_{s}, \quad u\in H^{-s}(\R^n), \; v\in H^s(\R^n),
$$
 where $\left\langle \cdot, \cdot \right\rangle_{s}$ is the standard Sobolev space duality pairing, the continuous sesquilinear form on $H^{-s}(\R^n)\times H^{s}(\R^n)$ defined by %
\begin{align}
\label{DualDef}
\left\langle u, v \right\rangle_{s}:=\left(\iota^{-1}\cJ_k^{-s} u,\iota^{-1}\cJ_k^{s} v\right)_{L^2(\R^n)} = \int_{\R^n}\hat{u}(\bxi) \overline{\hat{v}(\bxi)}\,\rd \bxi.
\end{align}
This unitary realisation of the dual space is attractive because the associated duality pairing \rf{DualDef} is simply the $L^2(\R^n)$ inner product when $u,v\in \iota(\scrS(\R^n))$, and a continuous extension of that inner product for $u\in H^{-s}(\R^n)$, $v\in H^{s}(\R^n)$. Moreover, if $u\in H^{-s}(\R^n)$ and $v\in \iota(\scrS(\R^n))\subset H^s(\R^n)$, then, using \eqref{FTDistDef} and \eqref{DualDef},
\begin{equation} \label{dualequiv}
\langle u, v\rangle_s  = (\iota^{-1}\cJ_k^{-s}u, \iota^{-1}\cJ_k^{s}v)_{L^2(\R^n)} = \cJ_k^{-s}u(\iota^{-1} \cJ_k^{s}v) = \cJ_k^{-s}u(\cJ_k^{s}\iota^{-1}v) = u(\iota^{-1}v),
\end{equation}
so that the duality pairing $\langle u, v\rangle_s$ is simply the action of the tempered distribution $u$ on $\iota^{-1}v\in \scrS(\R^n)$.

For ease of presentation, in the rest of the paper we will omit to write the operators $\iota$ and $\iota_\Omega$, thus identifying functions and distributions in the usual way.

Comparing \rf{eq:HsProdNorm} and \rf{DualDef} we see that $\mathcal{J}_k^{2s}:H^{s}(\R^n)\to H^{-s}(\R^n)$ satisfies
\begin{align}
\label{DualvsIP}
\langle \mathcal{J}_k^{2s}u,v \rangle_{s} = (u,v)_{H^s_k(\R^n)}, 
\qquad u,v\in H^{s}(\R^n).
\end{align}

For $s\in \R$ there are natural embeddings 
$\cI_s:H^{-s}(\Omega)\to(\tilde{H}^s(\Omega))^*$ 
and 
$\cI_s^*:\tilde{H}^{s}(\Omega)\to(H^{-s}(\Omega))^*$ 
given by (cf.\ e.g.\ \cite[Theorem 3.14]{McLean})
\begin{align}
\label{def_embed1App}
(\cI_s u)(v)
& := \langle u,v \rangle_{H^{-s}(\Omega)\times \tilde{H}^s(\Omega)}:=\langle U,v \rangle_{s},\\
\label{def_embed2App}
(\cI_s^*v)(u)
&:= \langle v,u \rangle_{\tilde H^{s}(\Omega)\times H^{-s}(\Omega)}:=\langle v,U \rangle_{-s},
\end{align}
where $U\in H^{-s}(\R^n)$ is any extension of $u$ with $U|_\Omega=u$. 
Indeed, $\cI_s^*$ can be viewed as the Banach space adjoint (or transpose) of $\cI_s$ in the sense, e.g., of Kato \cite{Ka:95}. In particular, note that
$$
\langle v,u \rangle_{\tilde H^{s}(\Omega)\times H^{-s}(\Omega)} = \overline{\langle u,v \rangle}_{H^{-s}(\Omega)\times \tilde{H}^s(\Omega)}, \quad v\in \tilde H^{s}(\Omega), \; u\in H^{-s}(\Omega).
$$
In fact, as Theorem \ref{DualityTheorem} below states, the natural embeddings $\cI_s$ and $\cI_s^*$ %
are unitary isomorphisms, and in this sense it holds that
\begin{align}
\label{isdual}
H^{-s}(\Omega)\cong(\tilde{H}^s(\Omega))^* \; \mbox{ and }\; \tilde H^{s}(\Omega)\cong(H^{-s}(\Omega))^*.
\end{align}
We remark that the representations \eqref{isdual} for the dual spaces are well known when $\Omega$ is sufficiently regular. However, it is not widely  appreciated, at least in the numerical PDEs community, that \eqref{isdual} holds without any constraint on the geometry of $\Omega$. For example, $H^s(\Omega)$ and $\tilde H^s(\Omega)$ are defined precisely as above for an arbitrary open set $\Omega$ in \cite{Steinbach}, but \eqref{isdual} is claimed there only for the case $\Omega$ Lipschitz. In \cite{McLean} \eqref{isdual} is shown under the less restrictive condition that $\Omega$ is $C^0$, but not claimed for an arbitrary open set $\Omega$.

\begin{thm}\label{DualityTheorem}
Let $\Omega$ be any non-empty open subset of $\R^n$, and $s\in\R$. Then the mappings $\cI_s:H^{-s}(\Omega)\to(\tilde{H}^s(\Omega))^*$ and $\cI_s^*:\tilde{H}^{s}(\Omega)\to(H^{-s}(\Omega))^*$ defined by \rf{def_embed1App} and \rf{def_embed2App} are unitary isomorphisms.
\end{thm}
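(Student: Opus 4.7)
My plan is to prove the statement for $\mathcal{I}_s$ first and then obtain the result for $\mathcal{I}_s^*$ by Banach-space duality, since $\mathcal{I}_s^*$ is the adjoint of $\mathcal{I}_s$ modulo the reflexivity identification of Hilbert spaces with their biduals. The whole argument rests on three facts already available in the excerpt: the duality pairing $\langle\cdot,\cdot\rangle_s$ realises $H^{-s}(\mathbb{R}^n)$ as $(H^s(\mathbb{R}^n))^*$ isometrically; $\tilde{H}^s(\Omega)$ is a \emph{closed} subspace of $H^s(\mathbb{R}^n)$; and the restriction map $|_\Omega:(H^{-s}_{\mathbb{R}^n\setminus\Omega})^\perp\to H^{-s}(\Omega)$ is a unitary isomorphism.

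The first step is well-definedness of $\mathcal{I}_s$. If $U,U'\in H^{-s}(\mathbb{R}^n)$ are two extensions of $u\in H^{-s}(\Omega)$, then $(U-U')|_\Omega=0$, which means $\supp(U-U')\subset\mathbb{R}^n\setminus\overline{\Omega}^c=\mathbb{R}^n\setminus\Omega$ in the distributional sense; equivalently $(U-U')(\varphi)=0$ for every $\varphi\in\scrD(\Omega)$. By \eqref{dualequiv} this gives $\langle U-U',\varphi\rangle_s=0$ for all $\varphi\in\scrD(\Omega)$, and since $\langle U-U',\cdot\rangle_s$ is continuous on $H^s(\mathbb{R}^n)$ and $\scrD(\Omega)$ is dense in $\tilde{H}^s(\Omega)$, the pairing vanishes on all of $\tilde{H}^s(\Omega)$. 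Hence $(\mathcal{I}_s u)(v)$ does not depend on the choice of extension. Boundedness with $\|\mathcal{I}_s u\|_{(\tilde{H}^s(\Omega))^*}\le\|u\|_{H^{-s}_k(\Omega)}$ follows from $|\langle U,v\rangle_s|\le\|U\|_{H^{-s}_k(\mathbb{R}^n)}\|v\|_{H^s_k(\mathbb{R}^n)}$ by taking the infimum over $U$.

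The central step is to prove the reverse inequality together with surjectivity, and this is where I expect the key technical content to lie. Given any $f\in(\tilde{H}^s(\Omega))^*$, the Hahn--Banach theorem (in its Hilbert-space guise, via orthogonal projection onto the closed subspace $\tilde{H}^s(\Omega)$) produces a norm-preserving extension $F\in(H^s(\mathbb{R}^n))^*$, so $\|F\|_{(H^s(\mathbb{R}^n))^*}=\|f\|_{(\tilde{H}^s(\Omega))^*}$. Using the isometric identification $(H^s(\mathbb{R}^n))^*\cong H^{-s}(\mathbb{R}^n)$ via $\langle\cdot,\cdot\rangle_s$, let $U\in H^{-s}(\mathbb{R}^n)$ be such that $F(\cdot)=\langle U,\cdot\rangle_s$, with $\|U\|_{H^{-s}_k(\mathbb{R}^n)}=\|F\|_{(H^s(\mathbb{R}^n))^*}$. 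Set $u:=U|_\Omega\in H^{-s}(\Omega)$; by definition of $\mathcal{I}_s$ (with $U$ as the extension), $\mathcal{I}_s u=f$, giving surjectivity. Moreover, the definition of the norm on $H^{-s}(\Omega)$ as the infimum over extensions gives
\begin{equation*}
\|u\|_{H^{-s}_k(\Omega)}\le\|U\|_{H^{-s}_k(\mathbb{R}^n)}=\|f\|_{(\tilde{H}^s(\Omega))^*}.
\end{equation*}
Combined with the bound from the first step, this shows $\mathcal{I}_s$ is isometric, hence injective; an isometric linear bijection between Hilbert spaces is a unitary isomorphism.

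Finally, for $\mathcal{I}_s^*$: either repeat the above argument verbatim with the roles of $s$ and $-s$ interchanged (noting that $H^{-s}(\Omega)$ is defined by restriction exactly as $H^s(\Omega)$, and $\tilde{H}^{-s}(\Omega)$ is defined as closure of $\scrD(\Omega)$ in $H^{-s}(\mathbb{R}^n)$), or appeal to the duality remark preceding the theorem: the sesquilinear identity in the excerpt shows that $\mathcal{I}_s^*$ is the Banach-space adjoint of $\mathcal{I}_s$, and the adjoint of a unitary isomorphism between Hilbert spaces is a unitary isomorphism (after the canonical identification of $\tilde{H}^s(\Omega)$ with its bidual). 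The main obstacle is really psychological rather than technical: one must resist the temptation to use any regularity of $\partial\Omega$ such as an extension operator, and instead lean entirely on Hahn--Banach and the isometric character of restriction for $(H^{-s}_{\mathbb{R}^n\setminus\Omega})^\perp$, both of which are available for arbitrary open $\Omega$.
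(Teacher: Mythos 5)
Your route is close in spirit to the paper's but packaged differently, and there is one logical wrinkle that needs to be smoothed out before the argument is watertight.

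The paper's surjectivity/isometry argument goes by \emph{constructing} the inverse directly: it sets $\mathcal{Q}u := (\cJ_k^{2s}u)|_\Omega$, observes $\normt{\mathcal{Q}}{}\leq 1$, and checks via \eqref{DualvsIP} that $\mathcal{R}:=\cI_s\circ\mathcal{Q}$ is the Riesz map of $\tilde H^s(\Omega)$ onto its dual; hence $\cI_s^{-1}=\mathcal{Q}\circ\mathcal{R}^{-1}$ exists and has norm $\leq 1$, giving isometry at once. You instead extend $f$ to $F$ on the ambient space by precomposing with the orthogonal projection onto $\tilde H^s(\Omega)$, identify $F$ with some $U\in H^{-s}(\R^n)$ via the duality pairing, and restrict. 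If you unwind the definitions, the $U$ you produce is exactly $\cJ_k^{2s}$ of the Riesz representative of $F$, so your preimage coincides with the paper's $\mathcal{Q}\mathcal{R}^{-1}f$; the two constructions compute the same thing, just routed through Hahn--Banach on $H^s(\R^n)$ rather than Riesz on $\tilde H^s(\Omega)$. Either way is legitimate and neither uses any regularity of $\partial\Omega$, which as you note is the whole point.

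The wrinkle: you conclude isometry by combining $\|\cI_s u\|\leq\|u\|$ with the existence, for each $f$, of \emph{some} preimage $u_f$ satisfying $\|u_f\|\leq\|f\|$, and then declare injectivity as a consequence of isometry. As written this is circular --- the chain $\|u\|\leq\|f\|=\|\cI_s u\|$ requires that the $u_f$ you produced is the original $u$, which is injectivity, which is what you are trying to derive. The gap is harmless because injectivity is a one-liner you already have the tools for: if $\cI_s u=0$, then for any extension $U$ of $u$ and all $v\in\scrD(\Omega)$ one has $\langle U,v\rangle_s=0$, which by \eqref{dualequiv} says $U(v)=0$, i.e.\ $U|_\Omega=0$, i.e.\ $u=0$. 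Insert this before the isometry claim (the paper states it explicitly) and your argument is complete. Also note the typo $\R^n\setminus\overline{\Omega}^c$ in your well-definedness step --- the set you want is simply $\R^n\setminus\Omega$. The treatment of $\cI_s^*$ by passing to adjoints matches the paper's and is fine.
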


\begin{proof}
Noting that for Hilbert spaces the concepts of unitarity and isometricity are equivalent, it suffices to show that $\cI_s$ and $\cI_s^*$ are isometric isomorphisms. We give the proof only for $\cI_s$; the result for $\cI_s^*$ follows by taking adjoints. 
To check that $\cI_s u$ is well-defined as an element of $(\tilde{H}^s(\Omega))^*$, we first note that the right-hand side of \rf{def_embed1App} is independent of the choice of extension $U$. To check this, it is sufficient to note that if $U,U'\in H^{-s}(\R^n)$ satisfy $U|_\Omega = U'|_\Omega$ then $\langle U-U',v\rangle_{s}=0$ for all $v\in \scrD(\Omega)$, and that this result extends to all $v\in \tilde{H}^s(\Omega)$ by density. The antilinearity of $\cI_s u$ is clear, and it is easy to check that the map $\cI_s$ is bounded, with $\norm{\cI_s}{}\leq 1$.

The map $\cI_s$ is injective since if $\cI_s u=0$ then for any extension $U\in H^s(\R^n)$ of $u$ and any $v\in \scrD(\Omega)$ we have $\langle U,v\rangle_{s}=0$, which implies that $u=U|_\Omega=0$.
To show that $\cI_s$ is surjective, we first note that the map
$\mathcal{Q}:\tilde{H}^{s}(\Omega)\to H^{-s}(\Omega)$ defined by $\mathcal{Q}u:=(\mathcal{J}_k^{2s}u)|_\Omega$ is bounded with $\norm{\mathcal{Q}}{}\leq 1$. Furthermore, the map $\mathcal{R}:=\cI_s\circ\mathcal{Q}:\tilde{H}^s(\Omega)\to (\tilde{H}^s(\Omega))^*$ satisfies, by \rf{DualvsIP},
\begin{align*}
\label{}
(\mathcal{R}u)(v)
= \langle \mathcal{J}_k^{2s}u,v \rangle_{s}
= (u,v)_{H^s_k(\R^n)}
= (u,v)_{\tilde{H}^s_k(\Omega)},
\qquad \qquad u,v\in \tilde{H}^s(\Omega).
\end{align*}
But, by the Riesz representation theorem, $\mathcal{R}$ is an isometric isomorphism. This implies that $\cI_s$ is surjective, and hence bijective. Moreover, since $\cI_s^{-1}=\mathcal{Q}\circ \mathcal{R}^{-1}$ and $\normt{\mathcal{Q}}{}\leq 1$, it follows that $\normt{\cI_s^{-1}}{}\leq 1$, so that in fact $\normt{\cI_s}{}=\normt{\cI_s^{-1}}{}=1$, i.e.~$\cI_s$ is an isometry.
\end{proof}

Similarly, one can show that there are natural unitary isomorphisms which make
\begin{align}
\label{isdual3}
(\tilde{H}^{-s}(\R^n\setminus F))^\perp \cong(H^s_F)^* \; \mbox{ and }\; H^s_F\cong((\tilde{H}^{-s}(\R^n\setminus F))^\perp)^*;
\end{align}
for a more detailed discussion see \cite{ChaHewMoi:13}. Of course, \rf{isdual} and \rf{isdual3} are related by the fact that \mbox{$H^{-s}(\Omega)\cong (H^{-s}_{\R^n\setminus \Omega})^\perp$} (cf.\ \S\ref{ss:SobolevDef}).

\subsection{Nullity and capacity}
\label{sec:Polarity}
In order to compare Sobolev spaces defined on different open sets (which we do in \S\ref{sec:DifferentDomains}), and to study the relationship between the spaces $\tilde H^s(\Omega)$ and $H^s_{\overline{\Omega}}$ on a given open set $\Omega$ (which we do in \S\ref{sec:3spaces}), we require the concept of $s$-nullity of subsets of $\R^n$, which can be thought of as an indicator of negligibility in the sense of Sobolev regularity. 
\begin{defn}
For $s\in\R$ we say that a set $E\subset\R^n$ is $s$-null if there are no non-zero elements of $H^{s}(\R^n)$ supported entirely inside $E$ (equivalently, if $H^{s}_{E'}=\{0\}$ for every closed set $E'\subset E$).
\end{defn}
\begin{rem}
While the term ``$s$-null'' appears to be new, the concept it describes is very natural and has been considered elsewhere in different contexts (see, e.g., the book by Maz'ya \cite{Maz'ya}). For integer $s<0$ our definition of $s$-nullity coincides with the notion of ``$(2,-s)$-polarity'' defined in \cite[\S 13.2]{Maz'ya}. For integer $s>0$, our notion of $s$-nullity is related to the concept of ``sets of uniqueness'' (cf.\ \cite[p692]{Maz'ya}). 
The reason Maz'ya uses two different terminologies for the positive and negative order spaces is not explained in \cite{Maz'ya}, but we expect this is due to the fact that Maz'ya works primarily with the Sobolev spaces $W^s$, where the positive order spaces are defined using weak derivatives, and the negative order spaces are defined by duality. By contrast, in the Bessel potential framework of the current paper, the spaces $H^s$ are defined in the same way for all $s\in\R$; hence it seems natural to define the notion of ``negligibility'' in the same way for all $s\in\R$. Our choice of ``$s$-nullity'' as the terminology for this concept over Maz'ya's teminology ``$(-s)$-polarity'' was made simply to simplify the presentation of the results which follow and make the arguments easier to read (we find it more natural to say that a set which does not support an $H^{s}(\R^n)$ distribution is ``$s$-null'' rather than ``$(-s)$-polar''). But the difference is essentially semantic, so readers familiar with the concept of polarity should read ``$(-s)$-polar'' for ``$s$-null'' throughout.
\end{rem}

The following lemma collects a number of basic facts about $s$-nullity, relating the concept to topological and geometrical properties of a set. In particular, the results in parts
\rf{gg} and \rf{hh} %
provide a partial characterization of the $s$-null sets for $-n/2\leq s<0$ in terms of Hausdorff dimension, (defined e.g.\ in \cite[\S 5.1]{AdHe}), which we denote here, for an arbitrary set $E\subset R^n$, by ${{\rm dim_H}}(E)$.
We remark that the results in \cite[Chapter 5]{AdHe} in fact allow a slightly more precise characterization of $s$-null sets for $-n/2\leq s<0$ in terms of Hausdorff measures, but the results in parts \rf{gg} and \rf{hh} %
seem sufficient for the applications of scattering by fractal screens that motivate the current study (and moreover we note that Adams and Hedberg's remark \cite[\S5.6.4]{AdHe} implies that no \emph{complete} characterization in terms of Hausdorff measure is possible).

\begin{lem}
\label{lem:polarity}
Let $E$ be any subset of $\R^n$, and $s\in\R$. Then:
\begin{enumerate}[(i)]
\item \label{aa}If $E$ is $s$-null then any subset $E'$ of $E$ is also $s$-null.
\item \label{bb}If $E$ is $s$-null then $E$ is also $t$-null for all $t>s$.
\item \label{cc}If $E$ is $s$-null then $E$ must have empty interior.
\item \label{dd}If $s>n/2$ then $E$ is $s$-null if and only if $E$ has empty interior.
\item \label{ee}If $s\geq0$ then any set $E$ with zero Lebesgue measure is $s$-null.
\item \label{ff}$E$ is $0$-null if and only if $E$ has zero Lebesgue measure.
\item \label{gg} For $-n/2\leq s<0$, if $E$ is non-empty, Borel %
and $s$-null, then ${\rm dim_H}(E)\leq n+2s$.
\item \label{hh} For $-n/2<s<0$, if $E$ is non-empty and Borel %
and ${\rm dim_H}(E)< n+2s$, then $E$ is $s$-null.
\item \label{ii} If $s=-n/2$, any finite set is $s$-null.
\item \label{jj}If $s<-n/2$ then there are no non-empty $s$-null sets.
\item \label{kk0} If $\Omega$ is a $C^0$ open set then $\partial\Omega$ is $s$-null if $s\geq 0$.
\item \label{kk1} If $\Omega$ is a $C^0$ open set then $\partial\Omega$ is not $s$-null if $s<-1/2$.
\item \label{kk2} If $\Omega$ is a $C^{0,\alpha}$ open set for some $0<\alpha<1$ then $\partial\Omega$ is $s$-null if $s> -\alpha/2$.
\item \label{kk} If $\Omega$ is a Lipschitz open set then $\partial\Omega$ is $s$-null if and only if $s\geq -1/2$.
\item \label{ll} Let $F_1$ and $F_2$ be closed, $s$-null subsets of $\R^n$. Then $F_1\cup F_2$ is $s$-null.
\end{enumerate}
\end{lem}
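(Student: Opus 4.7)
The plan is to proceed through the items grouped by technique, deferring the hardest arguments until last.

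The elementary statements \rf{aa}--\rf{ff} all follow quickly from the definitions and basic Sobolev facts. Part \rf{aa} is immediate. For \rf{bb}, the inclusion $H^t(\R^n)\hookrightarrow H^s(\R^n)$ whenever $t\geq s$ means any failure of $t$-nullity is inherited by $s$. For \rf{cc}, a non-empty open $U\subset E$ supports any non-zero $\phi\in\scrD(U)$, which lies in $H^s_{\overline U}\subset H^s_E$. In \rf{dd}, Sobolev embedding $H^s(\R^n)\hookrightarrow C_0(\R^n)$ (for $s>n/2$) makes $\{u\neq 0\}$ an open subset of $\supp u\subset E$; if $E$ has empty interior, this forces $u\equiv 0$. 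Part \rf{ee} uses $H^s(\R^n)\subset L^2(\R^n)$ for $s\geq 0$, noting $u$ vanishes a.e.\ outside $\supp u\subset E$. For the converse in \rf{ff}, if $E$ has positive Lebesgue measure, inner regularity yields a closed $F\subset E$ of positive measure, and $\chi_F\in L^2(\R^n)=H^0(\R^n)$ is non-zero with $\supp\chi_F\subset F\subset E$.

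For \rf{gg}--\rf{hh} I would appeal to the classical link between Bessel $(2,-s)$-capacity and Hausdorff dimension developed in \cite{AdHe}: for $-n/2\leq s<0$, $E$ is $s$-null iff its $(2,-s)$-Bessel capacity vanishes. Then \rf{gg} follows from Frostman's lemma: if $\dim_H(E)>n+2s$, pick $n+2s<\alpha<\dim_H(E)$, obtain a non-zero compactly supported Borel measure $\mu$ on a compact subset of $E$ with $\mu(B_r(\bx))\lesssim r^\alpha$, and check via a standard Fourier estimate that $\int(k^2+|\bxi|^2)^s|\hat\mu(\bxi)|^2\,\rd\bxi<\infty$, so $\mu\in H^s(\R^n)\setminus\{0\}$, contradicting $s$-nullity. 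Part \rf{hh} is the reverse implication, obtained by a covering argument in the spirit of \cite[Ch.~5]{AdHe}. For \rf{ii} and \rf{jj}, the structure theorem writes any distribution supported on a finite set as $u=\sum_{i,|\alpha|\leq M} c_{i,\alpha}\partial^\alpha\delta_{\bx_i}$; a direct Fourier computation extending \rf{eq:delta} shows $\partial^\alpha\delta_{\bx}\in H^s(\R^n)$ iff $s<-n/2-|\alpha|$, and a spherical-averaging argument (using that the $\bx_i$ are distinct) shows the top-order part of $|\hat u|^2$ grows like $|\bxi|^{2M}$ in mean, forcing all highest-order coefficients to vanish when $s=-n/2$; an induction then gives $u\equiv 0$, proving \rf{ii}. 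For \rf{jj}, $\delta_{\bx_0}\in H^s(\R^n)$ for any $\bx_0\in E$ is a direct witness.

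The boundary statements \rf{kk0}--\rf{kk} reduce to geometric facts about $\partial\Omega$ combined with the above. A $C^0$ boundary is locally the graph of a continuous function, hence Lebesgue null by Fubini, and a countable cover yields Lebesgue nullity of $\partial\Omega$ globally, giving \rf{kk0} via \rf{ee}. For \rf{kk1}, $\dim_H(\partial\Omega)\geq n-1$ for any $C^0$ open set (any local graph piece projects homeomorphically onto an $(n-1)$-dimensional region), and the contrapositive of \rf{gg} then rules out $s$-nullity whenever $n+2s<n-1$, i.e.\ $s<-1/2$. For \rf{kk2}, the standard bound $\dim_H(\mathrm{graph}(f))\leq n-\alpha$ for $f\in C^{0,\alpha}(\R^{n-1})$ gives $\dim_H(\partial\Omega)\leq n-\alpha$ globally (by countable subadditivity of Hausdorff dimension), and \rf{hh} furnishes $s$-nullity for $s>-\alpha/2$. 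Part \rf{kk} then combines \rf{kk0}, \rf{kk1} with the Lipschitz identity $\dim_H(\partial\Omega)=n-1$, leaving only the critical case $s=-1/2$, which I would handle by citing the classical fact that the $(2,1/2)$-Bessel capacity of a Lipschitz surface vanishes.

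Finally, \rf{ll} is a density argument based on the duality \rf{isdual3}: closed $s$-nullity of $F$ is equivalent to $\scrD(\R^n\setminus F)$ being dense in $H^{-s}(\R^n)$. Given $v\in H^{-s}(\R^n)$, approximate $v$ by $v_n\in\scrD(\R^n\setminus F_1)$ using the $s$-nullity of $F_1$. Since $K_n:=\supp v_n$ is compact and disjoint from the closed set $F_1$, choose $\chi_n\in\scrD(\R^n)$ with $\chi_n\equiv 1$ on $K_n$ and $\supp\chi_n\subset\R^n\setminus F_1$. Now use the $s$-nullity of $F_2$ to approximate $v_n$ in $H^{-s}$ by $w_{n,m}\in\scrD(\R^n\setminus F_2)$; multiplication by $\chi_n$ is bounded on $H^{-s}(\R^n)$, so $\chi_n w_{n,m}\to\chi_n v_n=v_n$ as $m\to\infty$, with $\chi_n w_{n,m}\in\scrD(\R^n\setminus(F_1\cup F_2))$. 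A diagonal extraction yields approximants of $v$ in $\scrD(\R^n\setminus(F_1\cup F_2))$, proving $F_1\cup F_2$ is $s$-null. The main obstacle overall is the capacity--dimension equivalence used in \rf{gg}/\rf{hh}, which hinges on deep results from \cite{AdHe} that cannot be reproved from scratch in reasonable space.
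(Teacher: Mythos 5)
Your proof is correct, and for most items it follows the paper's own route (definitional items, Sobolev embedding, the capacity/Hausdorff-dimension machinery from \cite{AdHe} together with Lemma~\ref{lem:polar_cap_equiv}). Three items are handled differently. For \rf{ll}, the paper proves a direct localization lemma in the primal spaces: if $0\neq u\in H^s(\R^n)$ has $\supp u\subset F_1\cup F_2$ and some $\bx_0\in\supp u\cap(F_1\setminus F_2)$, then a test function $\phi$ with $\phi(\bx_0)\neq 0$ and $\supp\phi$ avoiding $F_2$ gives $0\neq\phi u\in H^s(\R^n)$ with $\supp(\phi u)\subset F_1$, contradicting $s$-nullity of $F_1$. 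Your argument instead passes to the dual picture via \rf{isdual3}, recasting $s$-nullity of a closed set $F$ as density of $\scrD(\R^n\setminus F)$ in $H^{-s}(\R^n)$, and proves density for $F_1\cup F_2$ by a two-stage approximation with a cutoff $\chi_n$ bounded as a multiplier on $H^{-s}(\R^n)$. This is a valid alternative; the paper's primal localization is more self-contained (the unitary identification \rf{isdual3} is stated here but its proof is deferred to \cite{ChaHewMoi:13}), and the same localization lemma is reused verbatim in the proof of Theorem~\ref{thm:Hs_equality_closed}, so the paper's choice buys a reusable tool. For \rf{kk}, the paper handles all $s$ uniformly by reducing to a hyperplane via Lemma~\ref{lem:polar_cap_equiv} and the Lipschitz invariance of zero capacity \cite[Theorem 5.2.1]{AdHe}, then invoking \cite[Lemma 3.39]{McLean}; you instead patch together \rf{kk0}, \rf{kk1} and a Hausdorff-dimension argument, leaving the endpoint $s=-1/2$ to a cited ``classical fact'' --- which is precisely what the paper's reduction proves, so your version is built on the same foundation but is less complete. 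Finally, for \rf{ii} your spherical-averaging argument spells out a point the paper leaves implicit: that a nontrivial finite linear combination of derivatives of deltas at distinct points cannot lie in $H^{-n/2}(\R^n)$, not merely that a single delta cannot; the paper's one-line appeal to \eqref{eq:delta} is terser but is in the same spirit.
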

\begin{proof}
\rf{aa} and \rf{bb} These follow straight from the definition of $s$-nullity. \rf{cc} If $E$ has non-empty interior one can trivially construct a non-zero element of $C^\infty_0(\R^n)\subset H^{s}(\R^n)$ supported inside $E$. \rf{dd} In this case $H^{s}(\R^n)$ consists of continuous functions (by the Sobolev embedding theorem \cite[Theorem 3.26]{McLean}). \rf{ee} and \rf{ff} follows from the fact that $H^{s}(\R^n)$ is continuously embedded inside $L^2(\R^n)$ for $s\geq 0$. \rf{gg} and \rf{hh} are proved below. \rf{ii} If $E$ is a finite set, then any distribution supported on $E$ is necessarily a linear combination of delta functions and their derivatives supported on the points of $E$ \cite[Theorem 3.9]{McLean}, but delta functions are not contained in $H^{-n/2}(\R^n)$ (see \eqref{eq:delta}).
\rf{jj} In this case $H^{s}(\R^n)$ does contain delta functions, so any non-empty set $E$ supports non-zero elements of $H^{s}(\R^n)$.
\rf{kk0} follows from \rf{ee} and the fact that the graph of a continuous function has zero Lebesgue measure (which can be seen by considering the measure of the union of infinitely many vertical translates of its graph).
\rf{kk1} The case $n=1$ is covered by \rf{jj}, so assume that $n\geq 2$. Suppose, for a contradiction, that $\partial\Omega$ is $s$-null for some $s<-1/2$. Without loss of generality we can, by \rf{bb}, assume that $-n/2\leq s <-1/2$. Then, by \rf{gg}, $\dimH{\partial\Omega}\leq n+2s<n-1$.
But this contradicts 
the fact 
that $\dimH{\partial\Omega}\geq n-1$. Hence no such $s$ can exist.
\rf{kk2} follows from \rf{hh} and 
the fact that $\dimH{\partial\Omega}\leq n-\alpha$. 
\rf{kk} is proved at the end of this section.
\rf{ll}
We first notice the following fact:
if $U\in \scrD^*(\R^n)$ and $\phi\in \scrD(\R^n)$,
and if there exists $\bx\in\supp{U}$ such that $\phi(\bx)\neq 0$, then $\phi U\neq 0$ as a distribution on $\R^n$.
To prove this, suppose that there exists $\bx\in\supp{U}$ such that $\phi(\bx)\neq 0$. Then for $\eps>0$ let $B_\eps(\bx)$ be the ball of radius $\eps$ centred on $\bx$.
Choose $\eps$ such that $\phi$ is non-zero in $B_\eps(\bx)$.
Then, since $\bx\in\supp{U}$, $U|_{B_\eps(\bx)}\neq 0$ and so $U(\psi)\neq0$ for some $\psi\in \scrD(B_\eps(\bx))$.
But then, defining $\varphi\in \scrD(\R^n)$ by $\varphi(\bx):=\psi/\phi$, for $\bx\in B_\eps(\bx)$, and $\varphi(\bx):=0$ otherwise, we have $(\phi U)(\varphi) = U(\psi)\neq 0$, so $\phi U$ is non-zero.
To prove \rf{ll} we argue again by contrapositive. Suppose that $F_1\cup F_2$ is not $s$-null, i.e.\ there exists $0\neq u\in H^{s}(\R^n)$ with $\supp{u}\subset F_1\cup F_2$. Then if $\supp{u}\subset F_1$ or $\supp{u}\subset F_2$ we are done. Otherwise, suppose w.l.o.g.\ that there exists $\bx\in\supp{u}\cap(F_1\setminus F_2)$. Then since $F_2$ is closed, $\eps:=\dist(\bx,F_2)>0$. Let $\phi\in\scrD(B_\eps(\bx))$ with $\phi(\bx)\neq 0$.
Then by the result mentioned above $0\neq\phi u\in H^{s}(\R^n)$ with $\supp{\phi u}\subset F_1$, so that $F_1$ is not $s$-null.
\end{proof}

\begin{rem}
\label{rem:TriebelHausdorff}
Parts \rf{gg} and \rf{hh} of Lemma \ref{lem:polarity} imply that if $E$ is Borel and has zero Lebesgue measure, then
$$\dim_H(E) = \inf\big\{d: E \mbox{ is } \big((d-n)/2\big)\mbox{-null}\big\},$$
which is similar to \cite[Theorem 17.8]{Triebel97FracSpec} in the special case $p=q=2$.
\end{rem}

\begin{rem}
Part \rf{ll} of Lemma \ref{lem:polarity} is not true for general subsets.
A simple counterexample is where $F_1$ comprises the rational elements of $[0,1]$ and $F_2$ the irrational elements.
Then for $s>1/2$ both $F_1$ and $F_2$ are $s$-null, since they both have empty interior.
But $F_1\cup F_2=[0,1]$, which is not $s$-null for any $s$ (since it has non-empty interior).
\end{rem}

The remainder of this section is dedicated to the proof of parts \rf{gg}, \rf{hh} and \rf{kk} of Lemma \ref{lem:polarity}.
For this we require an alternative characterisation of $s$-null sets in terms of a set function called \emph{capacity}.
Since the notion of capacity is not used elsewhere in the paper, readers wishing to skip the remainder of this section can safely proceed to \S\ref{sec:3spaces}.

\begin{defn}
\label{def:cap}
For $s\in\R$ and $K\subset\R^n$ compact, define
(cf.\ \cite[\S 13.1]{Maz'ya}, where ${\rm Cap_s}(\cdot)$ is denoted ${\rm Cap}_s(\cdot,H^s(\R^n))$)
\begin{align*}
\label{}
{\rm Cap}_s(K):= \inf  \{\|u \|^2_{H^s(\R^n)}:u\in \scrD(\R^n) \textrm{ and } u=1 \textrm{ in a neighbourhood of } K \}.
\end{align*}
For arbitrary $E\subset\R^n$ define \emph{inner} and \emph{outer} capacities by
\begin{align*}
{\rm \underline{Cap}}{}_{s}(E)&:=\sup_{\substack{K\subset E\\ K \textrm{ compact}}} {\rm Cap}_s(K),
\qquad\qquad %
{\rm \overline{Cap}}_{s}(E):=\inf_{\substack{G\supset E\\ G \textrm{ open}}} {\rm \underline{Cap}}{}_{s}(G).
\end{align*}
Clearly ${\rm \underline{Cap}}{}_{s}(E)\leq{\rm \overline{Cap}}_{s}(E)$ for all $E\subset\R^n$. If ${\rm \underline{Cap}}{}_{s}(E)={\rm \overline{Cap}}_{s}(E)$ then we say $E$ is $s$-\emph{capacitable} and define the capacity of $E$ to be ${\rm Cap}_{s}(E):={\rm \underline{Cap}}{}_{s}(E)={\rm \overline{Cap}}_{s}(E)$ (cf.\ \cite[p.~28]{AdHe}, where ${\rm \overline{Cap_s}}$ is denoted $N_{s,2}$).
We note that for $s\geq 0$ all Borel subsets of $R^n$ (in particular all compact subsets) are $s$-capacitable \cite[Theorem 2.3.11]{AdHe}).
\end{defn}
Note that we have defined ${\rm Cap_s}(\cdot)$ for all $s\in \R$ but we only actually use it for $s>0$.

The following equivalence is stated and proved by Maz'ya in \cite[\S 13.2]{Maz'ya} for integer $s<0$. In fact the proof given in \cite{Maz'ya} works for all $s\in \R$ (note that we only actually use this result for $s<0$).
\begin{lem}
\label{lem:polar_cap_equiv}
Let $s\in\R$. Then a non-empty set $E\subset\R^n$ is $s$-null if and only if $\underline{\rm Cap}{}_{-s}(E)=0$.
\end{lem}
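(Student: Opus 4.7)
The plan is to prove the two implications separately, working throughout in the Bessel potential framework so that nothing is restricted to integer or negative $s$. The key analytic tools will be (a) the duality identification $(H^{-s}(\R^n))^*\cong H^s(\R^n)$ from Theorem \ref{DualityTheorem}, with the pairing $\langle\cdot,\cdot\rangle_{-s}$ reducing to the distributional action on test functions by \eqref{dualequiv}; and (b) the standard fact that multiplication by any $\chi\in\scrD(\R^n)$ is a bounded operator on $H^t(\R^n)$ for every $t\in\R$.

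For the implication ``$\underline{\rm Cap}{}_{-s}(E)=0\Rightarrow E$ is $s$-null'', suppose $u\in H^s(\R^n)$ has support in some closed $E'\subset E$; I want to show $u=0$. Since a distribution vanishes iff $\chi u=0$ for every $\chi\in\scrD(\R^n)$, and since $\chi u\in H^s(\R^n)$ has $\supp(\chi u)\subset E'\cap\supp(\chi)$, which is compact and contained in $E$, I may replace $u$ by $\chi u$ and so assume $u$ is supported in a compact set $K\subset E$. Monotonicity then gives ${\rm Cap}_{-s}(K)\leq \underline{\rm Cap}{}_{-s}(E)=0$, so I can pick $\phi_n\in\scrD(\R^n)$ equal to $1$ on open neighbourhoods of $K$ with $\|\phi_n\|_{H^{-s}(\R^n)}\to 0$. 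For arbitrary $\psi\in\scrD(\R^n)$, the function $(1-\phi_n)\psi$ vanishes on a neighbourhood of $\supp(u)$, so $u(\psi)=u(\phi_n\psi)$; then \eqref{dualequiv}, Cauchy--Schwarz, and the multiplier estimate give
\[
|u(\psi)| = |\langle u,\phi_n\psi\rangle_{-s}| \leq \|u\|_{H^s(\R^n)}\,\|\phi_n\psi\|_{H^{-s}(\R^n)} \leq C_\psi\,\|u\|_{H^s(\R^n)}\,\|\phi_n\|_{H^{-s}(\R^n)} \longrightarrow 0,
\]
so $u(\psi)=0$ for every $\psi\in\scrD(\R^n)$, i.e.\ $u=0$.

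For the converse, assume $\underline{\rm Cap}{}_{-s}(E)>0$ and pick a compact $K\subset E$ with ${\rm Cap}_{-s}(K)>0$. Fix some $\chi\in\scrD(\R^n)$ with $\chi=1$ on an open neighbourhood of $K$, and set $V:=\{\psi\in\scrD(\R^n):\psi=0 \text{ on some open neighbourhood of }K\}$, viewed as a linear subspace of $H^{-s}(\R^n)$. The set of admissible functions in the definition of ${\rm Cap}_{-s}(K)$ is exactly $\chi+V$, hence
\[
\dist_{H^{-s}(\R^n)}(\chi,V) = \inf_{\phi\in\chi+V}\|\phi\|_{H^{-s}(\R^n)} = \sqrt{{\rm Cap}_{-s}(K)} > 0,
\]
and in particular $\chi\notin\overline{V}^{H^{-s}(\R^n)}$. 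Orthogonal projection in the Hilbert space $H^{-s}(\R^n)$ (or Hahn--Banach) then yields a bounded antilinear functional $L$ on $H^{-s}(\R^n)$ with $L|_{\overline{V}}=0$ and $L(\chi)\neq 0$; by Theorem \ref{DualityTheorem} this corresponds to a unique $u\in H^s(\R^n)$ via $L(v)=\langle u,v\rangle_{-s}$, and by \eqref{dualequiv} the action on a test function is $u(\psi)=L(\psi)$. If $\psi\in\scrD(\R^n)$ has $\supp\psi\cap K=\emptyset$ then compactness of $K$ gives $\dist(\supp\psi,K)>0$, so $\psi\in V$ and $u(\psi)=0$; thus $\supp u\subset K\subset E$, while $u(\chi)\neq 0$ forces $u\neq 0$, showing that $E$ is not $s$-null.

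The main obstacle is essentially the multiplier estimate $\|\chi\psi\|_{H^{-s}(\R^n)}\leq C_\chi\|\psi\|_{H^{-s}(\R^n)}$ used in the forward direction: in Maz'ya's integer-negative-$s$ setting it is immediate from the weak-derivative characterisation, whereas for arbitrary real $s$ one must appeal to the standard Bessel-potential multiplier theorem, available either via the Fourier representation of $\|\cdot\|_{H^s(\R^n)}$ or directly from \cite{McLean}. The other ingredients---the minimising cutoff sequence realising the infimum in the definition of ${\rm Cap}_{-s}(K)$, the duality $H^s(\R^n)\cong(H^{-s}(\R^n))^*$, and the identity \eqref{dualequiv}---are already developed earlier in the paper.
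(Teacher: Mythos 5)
The paper does not actually supply a proof of this lemma: it simply remarks that Maz'ya proves the equivalence in \cite[\S 13.2]{Maz'ya} for integer $s<0$ and that the same proof works for all $s\in\R$. Your argument therefore goes beyond the paper by spelling out the details, and it is correct. Both implications check out. In the forward direction the chain $u\mapsto\chi u$ (to reduce to a compactly supported distribution with $\supp u\subset K\subset E$), the minimising cutoffs $\phi_n$ with $\|\phi_n\|_{H^{-s}(\R^n)}\to 0$, the identity $u(\psi)=u(\phi_n\psi)$ valid because $(1-\phi_n)\psi$ vanishes near $\supp u$, and the Cauchy--Schwarz plus multiplier estimate $\|\phi_n\psi\|_{H^{-s}(\R^n)}\le C_\psi\|\phi_n\|_{H^{-s}(\R^n)}$ together give $u(\psi)=0$. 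In the converse direction your identification of the admissible class in the definition of ${\rm Cap}_{-s}(K)$ with the coset $\chi+V$ is exactly right (both inclusions are immediate because $V$ is closed under negation), and so ${\rm Cap}_{-s}(K)=\dist_{H^{-s}(\R^n)}(\chi,V)^2>0$ forces $\chi\notin\overline V$; the Riesz-representation (or Hahn--Banach) step then produces $u\in H^s(\R^n)$ with $\supp u\subset K$ and $u(\chi)\ne 0$.

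Two small presentational points. First, you cite Theorem \ref{DualityTheorem} for the duality $(H^{-s}(\R^n))^*\cong H^s(\R^n)$, but that theorem concerns the $\Omega$-based spaces $\tilde H^s(\Omega)$ and $H^{-s}(\Omega)$; the duality you actually need on $\R^n$ is the one established in the opening paragraph of \S\ref{ss:DualAnnihIntp} (the unitary map $\cI=R\cJ_k^{2s}$), and \eqref{dualequiv} is the right reference for passing between the duality pairing and the distributional action. Second, the capacity in Definition \ref{def:cap} is taken with respect to $\|\cdot\|_{H^s(\R^n)}=\|\cdot\|_{H^s_1(\R^n)}$, so it is worth being explicit that the Cauchy--Schwarz step and the distance computation are carried out in that fixed norm (the equivalence with $\|\cdot\|_{H^s_k}$ is not needed). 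Finally, your closing remark correctly isolates what makes this argument go through for all real $s$ — namely replacing the Leibniz-rule argument available in Maz'ya's integer setting with the Bessel-potential multiplier theorem — which is exactly the content of the paper's unexplained assertion that Maz'ya's proof ``works for all $s\in\R$''.
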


Adams and Hedberg \cite{AdHe} also describe properties of the capacity ${\rm Cap}_s$, working mostly with the outer capacity ${\rm \overline{Cap}}_{s}(E)$, which they denote $N_{s,2}$ (cf.\ \cite[\S 5.7]{AdHe}). The results in \cite{AdHe} provide a partial characterization of the sets of zero outer capacity ${\rm \overline{Cap}}_{s}(E)$ for $0<s\leq n/2$ in terms of Hausdorff dimension,
which, when combined with Lemma \ref{lem:polar_cap_equiv}, proves parts \rf{gg} and \rf{hh} of Lemma \ref{lem:polarity}.
\begin{thm}\label{th:CapDimH}
Let $E\subset\R^n$. Then:
\begin{enumerate}[(i)]
\item For $0<s\leq n/2$, if ${\rm \overline{Cap}}_{s}(E)=0$ then ${\rm dim_H}(E)\leq n-2s$.
\item For $0<s<n/2$, if ${\rm dim_H}(E)< n-2s$ then ${\rm \overline{Cap}}_{s}(E)=0$.
\end{enumerate}
\end{thm}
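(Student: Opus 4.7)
The plan is to deduce both statements by invoking the relationship between Bessel capacity and Hausdorff dimension as developed in Adams--Hedberg \cite{AdHe}. First I would verify that ${\rm \overline{Cap}}_s(\cdot)$ as introduced in Definition \ref{def:cap} coincides with (or is at least equivalent to) the standard outer Bessel $(s,2)$-capacity $N_{s,2}$ appearing in \cite[\S2.3, \S5]{AdHe}. This is essentially built in: a function $u\in \scrD(\R^n)$ with $u=1$ on a neighbourhood of $K$ is admissible in both definitions, and the $H^s(\R^n)$-norm squared agrees with the Adams--Hedberg energy $\|u\|_{s,2}^2$ up to the usual Bessel potential identification $u=\cJ_1^{-s}f$ with $\|f\|_{L^2(\R^n)}^2=\|u\|_{H^s(\R^n)}^2$. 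Since the inner/outer capacity construction (Definition \ref{def:cap}) mirrors that of \cite[\S2.3.3]{AdHe}, the two set functions coincide on Borel sets and agree in the sense of vanishing on arbitrary sets.

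For part (i), with $0 < s \leq n/2$, I would appeal to the implication ``${\rm \overline{Cap}}_s(E)=0 \Rightarrow \mathcal{H}^d(E)=0$ for all $d>n-2s$'' (cf.\ \cite[Theorem 5.1.9]{AdHe}, or more precisely the refinement there that gives vanishing of a Hausdorff measure of gauge $r^{n-2s}$, which is more than enough). The sketch of that implication: for each $d>n-2s$ one estimates the $d$-dimensional Hausdorff content of $E$ by a sum $\sum r_i^d$ over a covering by balls, and uses that the indicator of each such ball, suitably mollified, is admissible in the definition of capacity with $H^s(\R^n)$-norm comparable to $r_i^{(n-2s)/2}$. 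Vanishing of capacity forces the covering sum, hence the content, to vanish. Taking infimum over $d>n-2s$ yields $\dim_H(E)\leq n-2s$.

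For part (ii), with $0<s<n/2$ and $\dim_H(E)<n-2s$, I would choose $d$ with $\dim_H(E)<d<n-2s$, so that $\mathcal{H}^d(E)=0$. I would then invoke the converse direction in \cite[Theorem 5.1.12]{AdHe}, which gives ${\rm \overline{Cap}}_s(E)=0$ whenever $\mathcal{H}^d(E)=0$ with $d<n-2s$. The underlying idea is contrapositive via Frostman's lemma: if ${\rm \overline{Cap}}_s(E)>0$ then by duality there is a non-trivial positive Radon measure $\mu$ supported in $\overline E$ whose Bessel potential $\cJ_1^{-s}\mu$ lies in $L^2(\R^n)$ (an energy-finite measure), and for such measures one has $\mu(B(x,r))\lesssim r^{n-2s}/\sqrt{|\log r|}$ on small balls, forcing $\mathcal{H}^d(E)>0$ for every $d<n-2s$ --- a contradiction.

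The main obstacle is really the bookkeeping of reconciling our definition with that of Adams--Hedberg; the hard analytic content (the Frostman-type construction of an energy measure on sets of positive Hausdorff dimension, and the ball-by-ball capacity estimate for the upper bound) is already in \cite{AdHe}. Given the paper's stated intent to invoke \cite{AdHe}, the proof should be written essentially as a citation of \cite[Theorems 5.1.9, 5.1.12]{AdHe} after noting the equivalence of the two capacity definitions.
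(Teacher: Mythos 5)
Your proposal is correct and takes essentially the same route as the paper: the paper's proof is a one-line reference to Corollary 3.3.4 (for reconciling the test-function definition of ${\rm Cap}_s$ with the Adams--Hedberg capacity $N_{s,2}$, which you flag as the ``main obstacle'') together with Theorems 5.1.9 and 5.1.13 of \cite{AdHe}. The only cosmetic difference is that for part (ii) you cite Theorem 5.1.12 (the Frostman/energy-measure route) where the paper cites Theorem 5.1.13 (the direct Hausdorff-measure-to-capacity comparison); both deliver the required implication $\dim_H(E) < n-2s \Rightarrow {\rm \overline{Cap}}_s(E)=0$.
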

\begin{proof}
This follows from Corollary 3.3.4, Theorem 5.1.9 and Theorem 5.1.13 of \cite{AdHe}.
\end{proof}

\begin{rem}
As a consequence of Theorem \ref{th:CapDimH} and Lemma \ref{lem:polar_cap_equiv}, we note that part \rf{gg} of Lemma \ref{lem:polarity} holds more generally for a $(-s)$-capacitable set $E$; part \rf{hh} holds for any set $E\subset\R^n$.
\end{rem}

Lemma \ref{lem:polar_cap_equiv} also allows us to prove part \rf{kk} of Lemma \ref{lem:polarity}.

\begin{proof}[Proof of Lemma \ref{lem:polarity}\rf{kk}]
By a standard partition of unity argument, it suffices to consider the case of a Lipschitz hypograph. Using Lemma \ref{lem:polar_cap_equiv} and the fact that the image of a set of zero capacity under a Lipschitz map has zero capacity \cite[Theorem 5.2.1]{AdHe}, we deduce that if $\Omega$ is a Lipschitz hypograph and $-n/2\leq s<0$, $\partial\Omega$ is $s$-null if and only if the hyperplane $\{\bx\in\R^n:x_n=0\}$ is $s$-null. But, by \cite[Lemma 3.39]{McLean}, a hyperplane is $s$-null if and only if $s\geq -1/2$. So for $n\geq 2$ we conclude that $\partial\Omega$ is $s$-null if and only if $s\geq -1/2$; for $n=1$ we lack the ``only if'' statement, but this follows because there are no non-empty $s$-null sets for $s<-n/2=-1/2$ (cf.\ Lemma \ref{lem:polarity}\rf{jj}).
\end{proof}
\subsection{Equality of spaces defined on different domains}
\label{sec:EqualityOfSpaces}
Using the concept of $s$-nullity defined in the previous section we can give a characterisation of when Sobolev spaces defined on different open or closed sets are in fact equal. For two subsets $E_1$ and $E_2$ of $\R^n$ we use the notation $E_1\ominus E_2$ to denote the symmetric difference between $E_1$ and $E_2$, i.e.
\begin{align*}
\label{}
E_1\ominus E_2:=(E_1\setminus E_2)\cup( E_2\setminus E_1)=(E_1\cup E_2)\setminus (E_1\cap E_2).
\end{align*}

\label{sec:DifferentDomains}
\begin{thm}
\label{thm:Hs_equality_closed}
Let $F_1$, $F_2$ be closed subsets of $\R^n$, and let $s\in\R$. Then the following are equivalent:
\begin{enumerate}[(i)]
\item \label{a}$F_1\ominus F_2$ %
is $s$-null.
\item \label{b} $F_1\setminus F_2$ and $ F_2\setminus F_1$ are both $s$-null.
\item \label{c}$H^s_{F_1\cap F_2}=H^s_{F_1}=H^s_{F_2}=H^s_{F_1\cup F_2}$.
\end{enumerate}
\end{thm}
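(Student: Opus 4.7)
I would close the cycle $(i)\Leftrightarrow(ii)$, $(iii)\Rightarrow(i)$, $(i)\Rightarrow(iii)$, relying throughout on Lemma~\ref{lem:polarity}.

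The equivalence $(i)\Leftrightarrow(ii)$ is a direct consequence of Lemma~\ref{lem:polarity}. The implication $(i)\Rightarrow(ii)$ is immediate from part~\rf{aa} (any subset of an $s$-null set is $s$-null). For $(ii)\Rightarrow(i)$, by the definition of $s$-nullity it suffices to show $H^s_{E'}=\{0\}$ for every closed $E'\subset F_1\ominus F_2$. I would write $E'=(E'\cap F_1)\cup(E'\cap F_2)$; both pieces are closed (intersection of closed sets), and since $E'\cap(F_1\cap F_2)=\emptyset$ they lie respectively in $F_1\setminus F_2$ and $F_2\setminus F_1$, so are $s$-null by (ii) together with part~\rf{aa}. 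The closed-union statement Lemma~\ref{lem:polarity}\rf{ll} then gives $E'$ $s$-null, as required. The direction $(iii)\Rightarrow(i)$ is equally quick: for any closed $E'\subset F_1\ominus F_2$ and $u\in H^s_{E'}$, (iii) places $u$ in $H^s_{F_1\cup F_2}=H^s_{F_1\cap F_2}$, forcing $\supp u\subset(F_1\cap F_2)\cap E'=\emptyset$, so $u=0$.

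The substantive direction is $(i)\Rightarrow(iii)$. The inclusions $H^s_{F_1\cap F_2}\subset H^s_{F_i}\subset H^s_{F_1\cup F_2}$ ($i=1,2$) are automatic, so I only need to prove $H^s_{F_1\cup F_2}\subset H^s_{F_1\cap F_2}$. Given $u\in H^s_{F_1\cup F_2}$, to show $\supp u\subset F_1\cap F_2$ I would test against an arbitrary $\phi\in\scrD(\R^n\setminus(F_1\cap F_2))$. Compactness of $\supp\phi$ and closedness of $F_1\cap F_2$ give $\delta:=d(\supp\phi,F_1\cap F_2)>0$, and I would choose $\psi\in\scrD(\R^n)$ with $\psi\equiv 1$ on $\supp\phi$ and $\supp\psi\subset\{x\in\R^n:d(x,F_1\cap F_2)\geq\delta/2\}$. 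Standard multiplier boundedness of $\scrD(\R^n)$-functions on $H^s(\R^n)$ ensures $\psi u\in H^s(\R^n)$, and
\begin{equation*}
\supp(\psi u)\subset\supp\psi\cap\supp u\subset\big\{x:d(x,F_1\cap F_2)\geq\delta/2\big\}\cap(F_1\cup F_2)\subset F_1\ominus F_2,
\end{equation*}
which is a \emph{closed} subset of the $s$-null set $F_1\ominus F_2$. Lemma~\ref{lem:polarity}\rf{aa} then forces $\psi u=0$, and since $\psi\phi=\phi$ pointwise on $\R^n$, we obtain $u(\phi)=(\psi u)(\phi)=0$, completing the proof.

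The main obstacle lies in this cutoff step. Because $F_1\ominus F_2$ is typically neither open nor closed, $s$-nullity cannot be applied to it directly; one must insert a quantitative separation from $F_1\cap F_2$ so that $\supp(\psi u)$ lands in a genuinely closed subset of $F_1\ominus F_2$. The auxiliary fact — continuity of multiplication by $\scrD(\R^n)$-functions on $H^s(\R^n)$ for every $s\in\R$ — is a standard consequence of the Fourier-Bessel definition of $H^s(\R^n)$ combined with duality, but deserves to be invoked explicitly, since negative~$s$ is included.
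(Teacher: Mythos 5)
Your proof is correct, and it routes the equivalences differently from the paper. The paper proves the cycle $(i)\Rightarrow(ii)\Rightarrow(iii)\Rightarrow(i)$: $(i)\Rightarrow(ii)$ by Lemma~\ref{lem:polarity}\rf{aa}, $(iii)\Rightarrow(i)$ by contrapositive, and for $(ii)\Rightarrow(iii)$ it first reduces to the nested case $A\subset B$ closed with $B\setminus A$ $s$-null, then argues by contrapositive: take $\bx\in\supp u\cap(B\setminus A)$, a small ball $B_\eps(\bx)$ avoiding $A$, and a bump $\phi\in\scrD(B_\eps(\bx))$ with $\phi(\bx)\ne0$ to produce $0\ne\phi u$ supported in $B\setminus A$. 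You instead prove $(i)\Leftrightarrow(ii)$ in both directions (the step $(ii)\Rightarrow(i)$ uses the closed-union Lemma~\ref{lem:polarity}\rf{ll}, which the paper's cycle does not need), prove $(iii)\Rightarrow(i)$ directly rather than by contrapositive, and prove the substantive implication $(i)\Rightarrow(iii)$ directly without the reduction to $A\subset B$, using a global distance-based cutoff $\psi$ that equals one on $\supp\phi$ and vanishes near $F_1\cap F_2$. Both hinge on the same engine---multiply by a cutoff to land $\supp(\psi u)$ in a closed subset of the $s$-null set $F_1\ominus F_2$, then conclude $\psi u=0$---but your version avoids the contrapositive bookkeeping and the intermediate nested-sets lemma, at the cost of invoking Lemma~\ref{lem:polarity}\rf{ll}. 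You correctly flag the one non-trivial auxiliary fact, that multiplication by $\scrD(\R^n)$ functions is bounded on $H^s(\R^n)$ for all $s\in\R$ (including $s<0$), which the paper also uses but less explicitly; for completeness you might also note that $\psi$ may be taken real-valued so that $(\psi u)(\phi)=u(\psi\phi)=u(\phi)$ holds with the paper's antilinear convention for distributions.
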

\begin{proof}
That \rf{a} $\Rightarrow$ \rf{b} follows from Lemma \ref{lem:polarity}\rf{aa}. To show that \rf{c} $\rightarrow$ \rf{a} we argue by contrapositive: if $F_1\ominus F_2$ is not $s$-null, then $H^s_{F_1\cap F_2}\neq H^s_{F_1\cup F_2}$. It remains to prove that \rf{b} $\rightarrow$ \rf{c}. For this it suffices to prove that, if $A,B\subset \R^n$ are closed and $A\subset B$, then if $B\setminus A$ is $s$-null, it holds that $H^s_A=H^s_B$. To see this we argue by contrapositive.
Suppose that $H^s_A\neq H^s_B$. Then there exists $0\neq u\in H^s(\R^n)$ with $\supp{u}\cap(B\setminus A)$ non-empty. Let $\bx\in\supp{u}\cap(B\setminus A)$, and (by the closedness of $A$), let $\eps>0$ be such that $B_\eps(\bx)\cap A$ is empty. Then, for any $\phi\in \scrD(B_\eps(\bx))$ such that $\phi(\bx)\neq 0$, it holds (cf.\ the proof of Lemma \ref{lem:polarity}\rf{ll}) that $0\neq \phi u \in H^s(\R^n)$ with $\supp(\phi u)\subset B\setminus A$, which implies that $B\setminus A$ is not $s$-null.
\end{proof}

From Theorem \ref{thm:Hs_equality_closed} one can deduce a corresponding result about spaces defined on open subsets.
\begin{thm}
\label{thm:Hs_equality_open}
Let $\Omega_1$, $\Omega_2$ be non-empty, open subsets of $\R^n$, and let $s\in\R$. Then the following are equivalent:
\begin{enumerate}[(i)]
\item \label{a0}$\Omega_1\ominus\Omega_2$ %
is $s$-null.
\item \label{b0}$\Omega_1\setminus\Omega_2$ and $\Omega_2\setminus\Omega_1$ are both $s$-null.
\item \label{c0}$\Omega_1\cap\Omega_2$ is non-empty and $H^{s}(\Omega_1\cap \Omega_2)=H^{s}(\Omega_1)=H^{s}(\Omega_2)=H^{s}(\Omega_1\cup \Omega_2)$, in the sense that
$\big(H^s_{\R^n\setminus (\Omega_1\cap \Omega_2)}\big)^\perp
=\big( H^s_{\R^n\setminus \Omega_1}\big)^\perp
=\big( H^s_{\R^n\setminus \Omega_2}\big)^\perp
=\big( H^s_{\R^n\setminus (\Omega_1\cup \Omega_2)}\big)^\perp$
(recall the identification $H^s(\Omega)\cong (H^s_{\R^n\setminus\Omega})^\perp$ discussed in \S\ref{ss:SobolevDef}).
\item \label{d0}$\Omega_1\cap\Omega_2$ is non-empty and $\tilde H^{-s}(\Omega_1\cap \Omega_2)=\tilde H^{-s}(\Omega_1)=\tilde H^{-s}(\Omega_2)=\tilde H^{-s}(\Omega_1\cap \Omega_2)$.
\end{enumerate}
\end{thm}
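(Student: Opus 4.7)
The plan is to reduce Theorem~\ref{thm:Hs_equality_open} to the closed-set version in Theorem~\ref{thm:Hs_equality_closed}, applied to the closed complements $F_i := \R^n\setminus \Omega_i$, $i=1,2$. The equivalence \rf{a0}~$\Leftrightarrow$~\rf{b0} is immediate from the definition of $\ominus$. For the remaining equivalences one uses three elementary set identities: $F_1\ominus F_2 = \Omega_1\ominus \Omega_2$, $F_1\cap F_2 = \R^n\setminus(\Omega_1\cup \Omega_2)$, and $F_1\cup F_2 = \R^n\setminus(\Omega_1\cap \Omega_2)$.

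For \rf{a0}~$\Leftrightarrow$~\rf{c0}, I would apply Theorem~\ref{thm:Hs_equality_closed} to $F_1,F_2$ at index $s$; by the set identities above its conclusion is precisely the equality of the four closed subspaces $H^s_{\R^n\setminus(\Omega_1\cup \Omega_2)}$, $H^s_{\R^n\setminus \Omega_1}$, $H^s_{\R^n\setminus \Omega_2}$, $H^s_{\R^n\setminus(\Omega_1\cap \Omega_2)}$ of $H^s(\R^n)$, and passage to orthogonal complements (a bijection on the lattice of closed subspaces of a Hilbert space) yields the perp equalities of \rf{c0}. The non-emptiness of $\Omega_1\cap \Omega_2$ is forced by \rf{b0}: if $\Omega_1\cap\Omega_2$ were empty then $\Omega_1\setminus \Omega_2$ would coincide with the non-empty open set $\Omega_1$, hence would have non-empty interior, contradicting $s$-nullity by Lemma~\ref{lem:polarity}\rf{cc}.

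To pass between \rf{c0} and \rf{d0} I would exploit the observation that the Bessel-potential isomorphism $\cJ_k^{2s}:H^s(\R^n)\to H^{-s}(\R^n)$ carries $(H^s_{\R^n\setminus \Omega})^\perp$ bijectively onto $\tilde H^{-s}(\Omega)$ for every non-empty open $\Omega\subset\R^n$. Via the identity $(u,v)_{H^s_k(\R^n)} = \langle \cJ_k^{2s}u,v\rangle_s$ from \rf{DualvsIP}, this amounts to the annihilator characterisation
\begin{align*}
\tilde H^{-s}(\Omega) = \big\{w\in H^{-s}(\R^n):\langle w,v\rangle_s = 0 \text{ for all } v\in H^s_{\R^n\setminus \Omega}\big\}.
\end{align*}
The $\subset$ inclusion follows because for $w\in\scrD(\Omega)$ and $v\in H^s_{\R^n\setminus \Omega}$ the pairing $\langle w,v\rangle_s$ vanishes (the two distributions have disjoint supports), and this extends to all of $\tilde H^{-s}(\Omega)$ by density of $\scrD(\Omega)$ and joint continuity of the pairing. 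The $\supset$ direction would follow by a standard Hilbert-space separation argument: given $w\in H^{-s}(\R^n)\setminus \tilde H^{-s}(\Omega)$, orthogonal projection combined with the Riesz-type identification underlying Theorem~\ref{DualityTheorem} produces a $v\in H^s(\R^n)$ annihilating $\tilde H^{-s}(\Omega)$ with $\langle w,v\rangle_s\neq 0$; annihilation of $\scrD(\Omega)$ forces $\supp v\subset \R^n\setminus \Omega$, so $v\in H^s_{\R^n\setminus \Omega}$, contradicting the hypothesis on $w$. Applying this bijective correspondence to each of the four open sets appearing in \rf{c0} delivers \rf{d0}, and conversely.

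The main technical point I expect to need care is the annihilator identity above: it has to hold for an \emph{arbitrary} non-empty open $\Omega$, with no regularity hypothesis on $\partial\Omega$. The argument is closely parallel to the Riesz-isomorphism proof of Theorem~\ref{DualityTheorem} and to the duality \rf{isdual3} already in the paper; in fact once that identity is in hand, all four equivalences of the theorem reduce to bookkeeping via the set-theoretic identities for $F_1,F_2$ noted above.
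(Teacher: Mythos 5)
Your proposal is correct and follows essentially the same route as the paper's (very terse) proof: apply Theorem~\ref{thm:Hs_equality_closed} to the closed complements $F_j=\R^n\setminus\Omega_j$, then transfer the resulting equalities of the subspaces $H^s_{\R^n\setminus\Omega}$ to the $\tilde H^{-s}$ spaces via duality. The concrete identification $\tilde H^{-s}(\Omega)=\cJ_k^{2s}\bigl((H^s_{\R^n\setminus\Omega})^\perp\bigr)$ that you establish is precisely what underlies the paper's appeal to Theorem~\ref{DualityTheorem} and \rf{isdual3}, and your annihilator/separation argument for it is sound.
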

\begin{proof}
The result follows from applying Theorem \ref{thm:Hs_equality_closed} with $F_j:=\R^n\setminus \Omega_j$, $j=1,2$, and from the duality Theorem \ref{DualityTheorem}.
\end{proof}

\begin{rem}
We note that for non-empty, open sets $\Omega_1$, $\Omega_2$, the symmetric difference $\Omega_1\ominus\Omega_2$ has empty interior if and only if 
\begin{align}
\label{eqn:ClosureEquality}
\overline{\Omega_1\cap \Omega_2} = \overline{\Omega_1} =\overline{\Omega_2}= \overline{\Omega_1\cup \Omega_2}. 
\end{align}
Thus, by Lemma \ref{lem:polarity}\rf{cc},\rf{dd}, \rf{eqn:ClosureEquality} is a necessary condition for the statements \rf{a0}--\rf{d0} of Theorem \ref{thm:Hs_equality_open} to hold; \rf{eqn:ClosureEquality} is also a sufficient condition when $s>n/2$, but
not in general for smaller $s$.
\end{rem}

\subsection{The relationship between $\tilde H^s(\Omega)$ and $H^s_{\overline{\Omega}}$}
\label{sec:3spaces}
For a non-empty open subset $\Omega$ of $\R^n$ and for $s\in\R$, the spaces $\tilde H^s(\Omega)$ and $H^s_{\overline{\Omega}}$ are closed subspaces of $H^s(\R^n)$ which satisfy the inclusion $\tilde{H}^s(\Omega)\subset H^s_{\overline\Omega}$. Under certain assumptions on $\Omega$ these two spaces are in fact equal.
\begin{lem}[{\cite[Theorem 3.29]{McLean}}]
 \label{lem:sob_equiv}
Let $\Omega\subset \R^n$ be non-empty, open, and $C^0$ (in the sense of \cite[p.\ 90]{McLean}), and let $s\in\R$.
Then $\tilde{H}^s(\Omega)=H^s_{\overline\Omega}$.
\end{lem}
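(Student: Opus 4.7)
The inclusion $\tilde H^s(\Omega)\subset H^s_{\overline\Omega}$ is immediate from the definitions, since if $u_n\to u$ in $H^s(\R^n)$ with $u_n\in\scrD(\Omega)$, then for any $\phi\in\scrD(\R^n\setminus\overline\Omega)$ we have $\langle u_n,\phi\rangle_0=0$ and hence $\langle u,\phi\rangle_0=0$ (in an appropriate sense via the duality of \S\ref{ss:DualAnnihIntp}), so $\supp u\subset\overline\Omega$. The non-trivial content of the lemma is the reverse inclusion $H^s_{\overline\Omega}\subset \tilde H^s(\Omega)$, for which the plan is the classical three-step strategy of \emph{localisation}, \emph{translation}, and \emph{mollification}.

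First, given $u\in H^s_{\overline\Omega}$, I would cover $\overline\Omega$ by a locally finite family of open sets $\{U_j\}$, with each $U_j$ either contained in $\Omega$ (an \emph{interior patch}) or a neighbourhood of a boundary point in which, after a rigid motion, $\partial\Omega\cap U_j$ is the graph $\{x_n=\varphi_j(x')\}$ of a continuous function $\varphi_j$, with $\Omega\cap U_j=\{x_n<\varphi_j(x')\}\cap U_j$ (this is precisely the $C^0$ hypothesis). Let $\{\chi_j\}$ be a smooth partition of unity subordinate to this cover. Since multiplication by elements of $\scrD(\R^n)$ is continuous on $H^s(\R^n)$, each $\chi_j u$ lies in $H^s(\R^n)$ with support in $\overline\Omega\cap\supp\chi_j$, and $u=\sum_j \chi_j u$ with the sum locally finite. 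It therefore suffices to approximate each $\chi_j u$ by elements of $\scrD(\Omega)$; for an unbounded $\Omega$, one may first truncate by multiplying by a cutoff that is $1$ on a large ball, using that such truncations converge in $H^s(\R^n)$ norm, so we may assume the partition is finite.

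For an interior patch, $\supp(\chi_j u)$ is compactly contained in $\Omega$, and convolution with a standard mollifier $\rho_\delta$ gives $\rho_\delta\ast(\chi_j u)\in\scrD(\Omega)$ for $\delta$ sufficiently small, with convergence in $H^s(\R^n)$. For a boundary patch, I would exploit the graph structure: define the translate $v_\eps(\bx):=(\chi_j u)(\bx+\eps\bs{e}_n)$ for $\eps>0$. Continuity of translation on $H^s(\R^n)$ (easily seen via the Fourier-side formula $\widehat{v_\eps}(\bxi)=\re^{\ri\eps\xi_n}\widehat{\chi_j u}(\bxi)$ and dominated convergence) gives $v_\eps\to\chi_j u$ in $H^s(\R^n)$ as $\eps\to 0^+$. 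The key geometric claim is that for $\eps$ small enough, $\supp v_\eps$ is a positive distance from $\partial\Omega$: since $\supp(\chi_j u)$ is compact and $\varphi_j$ is uniformly continuous on the relevant compact piece, the set $\supp(\chi_j u)-\eps\bs{e}_n$ lies in $\{x_n<\varphi_j(x')-\eps/2\}$ for sufficiently small $\eps$, hence strictly inside $\Omega$. Mollifying gives $\rho_\delta\ast v_\eps\in\scrD(\Omega)$ for $\delta<\dist(\supp v_\eps,\partial\Omega)/2$, with $\rho_\delta\ast v_\eps\to v_\eps$ in $H^s(\R^n)$ as $\delta\to 0^+$. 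A standard diagonal extraction then produces a sequence in $\scrD(\Omega)$ converging to $\chi_j u$ in $H^s(\R^n)$, and summing the approximations over $j$ completes the proof.

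The main obstacle is the translation step: ensuring that a single inward shift pushes \emph{all} of $\supp(\chi_j u)$ uniformly off the boundary. This requires both the compactness of the support (from the partition of unity) and the local graph representation with a \emph{uniformly continuous} graphing function, which is precisely what the $C^0$ hypothesis delivers. This is exactly the point at which the present lemma fails for general open sets with rough (e.g.\ fractal) boundaries, motivating the careful distinction between $\tilde H^s(\Omega)$ and $H^s_{\overline\Omega}$ in \S\ref{sec:3spaces}.
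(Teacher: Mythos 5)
Your proof is correct and follows the same localize--translate--mollify strategy as the cited reference (McLean, Theorem~3.29); the paper itself gives no proof, merely citing that result. One small remark: in the boundary-patch step the uniform continuity of $\varphi_j$ is not actually needed to place $\supp(\chi_j u)-\eps\bs{e}_n$ strictly below the graph (that follows trivially from $\supp(\chi_j u)\subset\{x_n\le\varphi_j(x')\}$); what the $C^0$ hypothesis and the compact support of $\chi_j$ genuinely buy you is that the translated support remains a compact subset of $\Omega\cap U_j$, so that its distance to $\partial\Omega$ is positive and mollification with a sufficiently small radius produces an element of $\scrD(\Omega)$.
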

However, this equality does not hold for general open subsets $\Omega$. In particular we note the following general results, which, despite their simplicity, appear to be new (for a more detailed discussion see \cite{ChaHewMoi:13}). Here, and in the sequel, $m$ denotes the $n$-dimensional Lebesgue measure on $\R^n$. 
\begin{lem}
\label{lem:ss_neq}
Let $\Omega\subset \R^n$ be non-empty and open. 
\begin{enumerate}[(i)]
\item \label{aa0} $\tilde{H}^0(\Omega)=H^0_{\overline{\Omega}}$ if and only if $m(\partial\Omega)=0$;
\item \label{bb0} If $0<s\leq n/2$ and $\dimH(\mathrm{int}(\overline{\Omega})\setminus \Omega)>n-2s$ then $\tilde H^s(\Omega) \subsetneqq H^s_{\overline{\Omega}}$;
\item \label{cc0} If $s>n/2$ and $\Omega\neq \mathrm{int}(\overline{\Omega})$ then $\tilde H^s(\Omega) \subsetneqq H^s_{\overline{\Omega}}$.
\end{enumerate}
\end{lem}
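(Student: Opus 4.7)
For part \rf{aa0}, my plan is to unfold definitions using the identification $H^0(\R^n)=L^2(\R^n)$. The closure $\tilde H^0(\Omega)$ of $\scrD(\Omega)$ in $L^2(\R^n)$ coincides with $\{u\in L^2(\R^n):u=0\text{ a.e.\ on }\R^n\setminus\Omega\}$, whereas $H^0_{\overline{\Omega}}$ coincides with $\{u\in L^2(\R^n):u=0\text{ a.e.\ on }\R^n\setminus\overline\Omega\}$, because for an $L^2$ function the distributional condition $\supp u\subset\overline\Omega$ is equivalent to vanishing almost everywhere on the open complement $\R^n\setminus\overline\Omega$. These two subspaces agree precisely when $m(\partial\Omega)=0$; if $m(\partial\Omega)>0$ then $\chi_{\partial\Omega}$ is an explicit witness in $H^0_{\overline\Omega}\setminus\tilde H^0(\Omega)$.

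For parts \rf{bb0} and \rf{cc0} I will use a common duality argument. Set $E:=\mathrm{int}(\overline\Omega)\setminus\Omega$; since $\Omega$ is open, $E\subset\partial\Omega$ while simultaneously $E\subset\mathrm{int}(\overline\Omega)\subset\overline\Omega$. The plan is to produce a non-zero $v\in H^{-s}(\R^n)$ with $\supp v\subset E$, and then to exhibit a test function in $H^s_{\overline\Omega}\setminus\tilde H^s(\Omega)$ by exploiting this two-sided location of $E$. Specifically, I will pick $\bx\in\supp v$, choose $\eps>0$ with $\overline{B_\eps(\bx)}\subset\mathrm{int}(\overline\Omega)$ (possible since $\mathrm{int}(\overline\Omega)$ is open), and use the definition of support to select $\phi\in\scrD(B_\eps(\bx))$ with $v(\phi)\neq 0$. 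Then $\supp\phi\subset\overline\Omega$ gives $\phi\in H^s_{\overline\Omega}$; and since $\supp v\cap\Omega=\emptyset$, we have $v(\psi)=0$ for all $\psi\in\scrD(\Omega)$, so by density of $\scrD(\Omega)$ in $\tilde H^s(\Omega)$, continuity of the pairing, and \eqref{dualequiv}, the antilinear functional $\langle v,\cdot\rangle_s$ annihilates $\tilde H^s(\Omega)$. Since $\langle v,\phi\rangle_s=v(\phi)\neq 0$, $\phi\notin\tilde H^s(\Omega)$.

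What remains is to construct $v$ in each case. For \rf{bb0}, the set $E=\mathrm{int}(\overline\Omega)\cap(\R^n\setminus\Omega)$ is Borel, and non-empty because $\dimH(E)>n-2s\geq 0$. Since $0<s\leq n/2$ corresponds to $-n/2\leq-s<0$, I apply Lemma \ref{lem:polarity}\rf{gg} contrapositively with $-s$ in place of $s$: the hypothesis $\dimH(E)>n-2s=n+2(-s)$ forces $E$ not to be $(-s)$-null, so some closed $E'\subset E$ supports a non-zero $v\in H^{-s}_{E'}$. For \rf{cc0}, the hypothesis $\Omega\neq\mathrm{int}(\overline\Omega)$ combined with $\Omega\subset\mathrm{int}(\overline\Omega)$ gives a point $\bx_0\in E$; take $v:=\delta_{\bx_0}$, which has support $\{\bx_0\}\subset E$ and lies in $H^{-s}(\R^n)$ by \eqref{eq:delta} since $-s<-n/2$.

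The main conceptual obstacle is recognising that although $E$ lies entirely in $\partial\Omega$, it also lies in the open set $\mathrm{int}(\overline\Omega)$, so that test functions supported in small balls around points of $\supp v$ can be chosen to sit inside $\overline\Omega$ (placing $\phi\in H^s_{\overline\Omega}$) while simultaneously meeting $\supp v$ (so $v(\phi)\neq 0$). After this geometric observation, the rest of the argument is routine bookkeeping with the duality pairing \eqref{dualequiv} and the density definition of $\tilde H^s(\Omega)$.
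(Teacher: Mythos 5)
Your proof is correct. Part (i) is essentially the same argument as the paper's one-line observation that $\tilde H^0(\Omega)=L^2(\Omega)$ and $H^0_{\overline\Omega}=L^2(\overline\Omega)$, just spelled out in more detail (with the $\chi_{\partial\Omega}$ witness when $m(\partial\Omega)>0$).

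For parts (ii) and (iii) you and the paper share the same initial input — Lemma \ref{lem:polarity}\rf{gg} or \rf{jj} shows that $E:=\mathrm{int}(\overline\Omega)\setminus\Omega$ is not $(-s)$-null — but the conclusion is reached by different routes. The paper packages the deduction into the pre-built Theorem \ref{thm:Hs_equality_open} (applied with $\Omega_1=\Omega$, $\Omega_2=\mathrm{int}(\overline\Omega)$ and $-s$ in place of $s$), obtaining the chain $\tilde H^s(\Omega)\subsetneqq\tilde H^s(\mathrm{int}(\overline\Omega))\subset H^s_{\overline\Omega}$; you instead bypass Theorems \ref{thm:Hs_equality_closed}--\ref{thm:Hs_equality_open} and construct an explicit witness $\phi\in H^s_{\overline\Omega}\setminus\tilde H^s(\Omega)$ by taking a non-zero $v\in H^{-s}(\R^n)$ supported in a closed subset of $E$, localising near a point of $\supp v$ inside the open set $\mathrm{int}(\overline\Omega)$, and using that the functional $\langle v,\cdot\rangle_s$ annihilates $\tilde H^s(\Omega)$ (by support disjointness and density) while $\langle v,\phi\rangle_s=v(\phi)\neq 0$. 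This is, in effect, an unrolling of the relevant direction of Theorem \ref{thm:Hs_equality_closed} in this special case: it is slightly longer but more self-contained and produces an explicit smooth compactly supported function distinguishing the two spaces. As a minor aside, the paper's printed proof contains a typo, stating that ``$\mathrm{int}(\overline\Omega)$ is $(-s)$-null'' where the intended statement is that $\mathrm{int}(\overline\Omega)\setminus\Omega$ is \emph{not} $(-s)$-null; your version avoids that ambiguity.
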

\begin{proof}
\rf{aa0} holds because $\tilde H^0(\Omega) = L^2(\Omega)$ and $H^0_{\overline{\Omega}}=L^2(\overline{\Omega})$. For \rf{bb0} and \rf{cc0}, if either $0<s\leq n/2$ and $\dimH(\mathrm{int}(\overline{\Omega})\setminus \Omega)>n-2s$ or $s>n/2$ and $\Omega\neq \mathrm{int}(\overline{\Omega})$, then (by Lemma \ref{lem:polarity}\rf{gg} or \rf{jj} respectively) $\mathrm{int}(\overline{\Omega})$ is $(-s)$-null, and hence $\tilde H^s(\Omega) \subsetneqq \tilde H^s(\mathrm{int}(\overline{\Omega})) \subset H^s_{\overline{\Omega}}$ by Theorem \ref{thm:Hs_equality_open}.
\end{proof}

We also note the following result, which follows from \rf{isdual}. 
\begin{lem}
Let $\Omega\subset \R^n$ be non-empty and open and let $s\in\R$. Then
 \begin{align*}
\label{}
\tilde H^s(\Omega) = H^s_{\overline{\Omega}}   
\; \mbox{ if and only if }\; 
\tilde H^{-s}(\R^n\setminus \overline{\Omega}) = H^{-s}_{\R^n \setminus \Omega} .
\end{align*}
\end{lem}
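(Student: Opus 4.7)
The plan is to reformulate both asserted equalities as statements about annihilators under the Sobolev duality pairing $\langle\cdot,\cdot\rangle_s$ between $H^{-s}(\R^n)$ and $H^s(\R^n)$, and then to conclude by the bipolar theorem. For a closed subspace $W\subset H^s(\R^n)$ I would set $W^\circ := \{u\in H^{-s}(\R^n) : \langle u,v\rangle_s = 0 \text{ for all } v\in W\}$, and analogously for closed subspaces of $H^{-s}(\R^n)$. Since the map $\cI$ introduced just before \rf{isdual} identifies $H^{-s}(\R^n)$ isometrically with the dual $(H^s(\R^n))^*$ (and symmetrically for $\langle\cdot,\cdot\rangle_{-s}$), the bipolar identity $(W^\circ)^\circ = W$ is a standard consequence of Hahn--Banach in this setting, so that distinct closed subspaces have distinct annihilators.

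The first computation I would carry out is $(\tilde H^s(\Omega))^\circ = H^{-s}_{\R^n\setminus\Omega}$. By density of $\scrD(\Omega)$ in $\tilde H^s(\Omega)$, a distribution $u\in H^{-s}(\R^n)$ lies in $(\tilde H^s(\Omega))^\circ$ if and only if $\langle u,\varphi\rangle_s = 0$ for every $\varphi\in\scrD(\Omega)$, and by \rf{dualequiv} this pairing equals $u(\varphi)$; hence the condition reduces to $u|_\Omega = 0$, i.e.\ $\supp u\subset\R^n\setminus\Omega$. The second computation is the symmetric one: I would apply exactly the same argument with $s$ replaced by $-s$ and $\Omega$ replaced by the open set $\R^n\setminus\overline{\Omega}$, using the conjugate symmetry $\langle u,v\rangle_s = \overline{\langle v,u\rangle_{-s}}$ (immediate from \rf{DualDef}) to swap the roles of the two slots. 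This shows that the annihilator of $\tilde H^{-s}(\R^n\setminus\overline{\Omega})$ in $H^s(\R^n)$ equals $\{v\in H^s(\R^n) : v|_{\R^n\setminus\overline{\Omega}} = 0\}$, which coincides with $H^s_{\overline{\Omega}}$ since $\R^n\setminus\overline{\Omega}$ is open (so that the vanishing of the restriction is equivalent to $\supp v\subset\overline{\Omega}$). The bipolar theorem then yields $(H^s_{\overline{\Omega}})^\circ = \tilde H^{-s}(\R^n\setminus\overline{\Omega})$.

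Assembling the two identifications, the bipolar theorem gives that $\tilde H^s(\Omega) = H^s_{\overline{\Omega}}$ (as closed subspaces of $H^s(\R^n)$) if and only if their annihilators coincide, which is exactly the equality $H^{-s}_{\R^n\setminus\Omega} = \tilde H^{-s}(\R^n\setminus\overline{\Omega})$. The only substantive point to verify is the bipolar theorem in the non-standard pairing $\langle\cdot,\cdot\rangle_s$, but once the isomorphism $\cI$ preceding \rf{isdual} is used to reduce it to the ordinary Banach-space bipolar theorem between a Hilbert space and its dual, what remains is routine bookkeeping; the main care needed is simply tracking which of the sets $\Omega,\overline{\Omega},\R^n\setminus\Omega,\R^n\setminus\overline{\Omega}$ plays the role of an open or a closed carrier at each stage.
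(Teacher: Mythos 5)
Your proof is correct, and it is the natural spelling-out of what the paper dismisses with the single phrase ``which follows from \rf{isdual}.'' The two annihilator identities you compute --- $(\tilde H^s(\Omega))^\circ = H^{-s}_{\R^n\setminus\Omega}$ (which is, via \rf{dualequiv} and the density of $\scrD(\Omega)$, just the statement that $u$ annihilates all test functions supported in $\Omega$ iff $u|_\Omega=0$) and its mirror image $(\tilde H^{-s}(\R^n\setminus\overline\Omega))^\circ = H^s_{\overline\Omega}$ --- are exactly the facts underlying the paper's duality isomorphisms \rf{isdual} and \rf{isdual3}, and passing between them via the bipolar theorem (which, as you note, reduces through the unitary identification $\cI$ to the standard reflexive-Banach-space statement) is the standard route. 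So this is the same argument in substance, just made explicit.

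One minor remark: you do not actually need the bipolar theorem in its general Banach-space form here, since everything lives inside Hilbert spaces. Because $\cJ_k^{2s}\colon H^s(\R^n)\to H^{-s}(\R^n)$ is unitary and satisfies $\langle\cJ_k^{2s}u,v\rangle_s=(u,v)_{H^s_k(\R^n)}$ by \rf{DualvsIP}, your annihilators in the duality pairing are just images under $\cJ_k^{\pm 2s}$ of Hilbert-space orthogonal complements, and the identity ``closed subspaces are equal iff their orthogonal complements are equal'' is elementary. This is purely cosmetic, but it avoids invoking Hahn--Banach and keeps the proof self-contained within the Hilbert-space machinery the paper has already set up.
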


\begin{rem}
\label{rem:ss_equality}
While $\tilde H^s(\Omega)$ and $H^s_{\overline{\Omega}}$ may not coincide in general, we remark that for any non-empty open subset $\Omega\subset\R^n$ and any $s\in\R$ it holds (cf.\ \cite[Lemma 3.24]{McLean}) that, for arbitrary $\eps>0$, $H^s_{\overline\Omega}\subset \tilde{H}^s(\Omega_\eps)$, where $\Omega_\eps:=\{\bx\in\R^n:\dist(\bx,\Omega)<\eps \}$. In other words, any element of $H^s_{\overline\Omega}$ can be approximated to arbitrary precision by a smooth function whose support lies within an arbitrarily small neighbourhood of $\Omega$. This fact will underpin the rigorous definition of the layer potentials discussed later in the paper.
\end{rem}

\section{Screen scattering problems}
\label{ScatProb}
Having established our Sobolev space notation, we now turn to problem (i) of \S\ref{sec:Intro}, namely that of correctly formulating problems of acoustic scattering by arbitrary planar screens. 
In what follows let $\Gamma$ be a bounded and relatively open non-empty subset of $\Gamma_\infty:=\{ \bx\in\mathbb{R}^n:x_n=0\}$, %
and  %
let $D:=\R^n\setminus \overline\Gamma$. 
 We consider in this section problems of acoustic scattering in which a wave is incident from the unbounded domain $D$ onto the planar screen $\Gamma$.
We will seek solutions to our scattering problems in the natural energy space $W^1_{\mathrm{loc}}(D)$, imposing the boundary conditions through trace operators from this space to Sobolev spaces defined on $\Gamma$.

\subsection{Function spaces and trace operators}
\label{subsec:FuncSpacesTrace}

To define Sobolev spaces on $\Gamma_\infty$ and on the screen $\Gamma$, we make the natural associations of $\Gamma_\infty$ with $\R^{n-1}$ and of $\Gamma$ with $\tGamma:=\{ \tbx\in \R^{n-1}:(\tbx,0)\in\Gamma\}\subset \R^{n-1}$ and set $H^s(\Gamma_\infty) := H^s(\R^{n-1})$, $H^{s}(\Gamma):=H^{s}(\tGamma)$, $\tilde{H}^{s}(\Gamma):=\tilde{H}^{s}(\tGamma)$, and $H^{s}_{\overline{\Gamma}}:=H^{s}_{\overline{\tGamma}}$ (with $C^\infty(\Gamma_\infty)$, $\scrD(\Gamma_\infty)$, $\scrD(\overline\Gamma)$ and $\scrD(\Gamma)$ defined analogously). Let $U^+$ and $U^-$ denote the upper and lower half-spaces, respectively, i.e., $U^+:= \{\bx=(\tbx,x_n):x_n>0\}$ and $U^- := \R^n\setminus \overline{U^+}$. We define (Dirichlet) trace operators $\gamma^\pm:\scrD(\overline U^\pm)\to \scrD(\Gamma_\infty)$ by $\gamma^\pm u := u|_{\Gamma_\infty}$. It is well know that these trace operators extend to bounded linear operators $\gamma^\pm:W^1(U^\pm)\to H^{1/2}(\Gamma_\infty)$. Of note is the fact that%
\begin{align}
\label{W1DCharac}
W^1(D) = \{u\in L^2(D): u|_{U^\pm}\in W^1(U^\pm) \textrm{ and } \gamma^+u=\gamma^-u \textrm{ on } \Gamma_\infty\setminus \overline{\Gamma}\}.
\end{align}
Similarly, we define normal derivative operators $\partial_{\bn}^\pm:\scrD(\overline U^\pm)\to \scrD(\Gamma_\infty)$ by
$$
\partial_{\bn}^\pm u(\bx) = \frac{\partial u}{\partial x_n} (\bx), \quad \bx \in \Gamma_\infty.
$$
It is well known that the domain of these operators can be extended to $W^1(U^\pm;\Delta):= \{u\in H^1(U^\pm):\Delta u\in L^2(U^\pm)\}$, where $\Delta u$ is the (weak) Laplacian of $u$, as mappings $\partial_\bn^\pm:W^1(U^\pm;\Delta)\to H^{-1/2}(\Gamma_\infty)=(H^{1/2}(\Gamma_\infty))^*$, defined by
$$
\langle \partial_\bn^\pm u, \phi \rangle_{H^{-1/2}(\Gamma_\infty)\times H^{1/2}(\Gamma_\infty)} = \mp \int_{U^\pm} \left( \nabla u \cdot \nabla \bar v + \bar v \Delta u \right) \rd x, \quad u\in W^1(U^\pm;\Delta), \; \phi\in H^{1/2}(\Gamma_\infty),
$$
with $v\in W^1(U^\pm)$ with $\gamma^\pm v = \phi$, and that Green's first identity holds, that
$$
\langle \partial_\bn^\pm u, \gamma^\pm v\rangle_{H^{-1/2}(\Gamma_\infty)\times H^{1/2}(\Gamma_\infty)} = \mp \int_{U^\pm} \left( \nabla u \cdot \nabla \bar v + \bar v \Delta u \right) \rd x, \quad u\in W^1(U^\pm;\Delta), \; v\in W^{1}(U^\pm).
$$

Let $
W:= \{u\in L^2_{\rm loc}(\R^n): u|_{U^\pm}\in W_{\rm loc}^1(U^\pm;\Delta)\}
$.
Then for $u\in W$ satisfying
\begin{align*}
\label{}
\gamma^+(\chi u)|_{\Gamma_\infty\setminus\overline\Gamma} - \gamma^+(\chi u)|_{\Gamma_\infty\setminus\overline\Gamma} = 0, \quad \mbox{for all } \chi\in\scrD(\R^n),
\end{align*}
we define
\begin{align*}
\label{}
[u]&:=\gamma^+(\chi u)-\gamma^-(\chi u)\in H^{1/2}_{\overline{\Gamma}},
\end{align*}
where $\chi$ is any element of $\scrD_{1,\Gamma}(\R^n):=\{\phi \in \scrD(\R^n)$: $\phi=1$ in some neighbourhood of $\Gamma\}$.
Similarly, for $u\in W$ satisfying
\begin{align*}
\label{}
\partial_\bn^+(\chi u)|_{\Gamma_\infty\setminus\overline\Gamma} - \partial_\bn^+(\chi u)|_{\Gamma_\infty\setminus\overline\Gamma} = 0, \quad \mbox{for all } \chi\in\scrD(\R^n),
\end{align*}
we define
\begin{align*}
\label{}
[\pdonetext{u}{\bn}]&:=\partial^+_\bn(\chi u)-\partial^-_\bn(\chi u)\in H^{-1/2}_{\overline{\Gamma}},
\end{align*}
where $\chi$ is any element of $\scrD_{1,\Gamma}(\R^n)$. 

\subsection{Layer potentials and boundary integral operators}

We define the single and double layer potentials
\begin{align*}
\label{}
\cS_k:H^{-1/2}_{\overline{\Gamma}}\to C^2(D)\cap W^1_{\rm loc}(D),\qquad \cD_k:H^{1/2}_{\overline{\Gamma}}\to C^2(D)\cap W^1_{\rm loc}(D),
\end{align*}
by
\begin{align*}
\label{}
\cS_k\phi (\bx)&:=\left\langle \gamma^\pm(\rho\Phi(\bx,\cdot)),\overline{\phi}\right\rangle_{H^{1/2}(\Gamma_\infty)\times H^{-1/2}(\Gamma_\infty)}, \qquad \bx\in D,\, \phi\in H^{-1/2}_{\overline{\Gamma}},\\
\cD_k\psi (\bx)&:=\left\langle \psi, \overline{\partial^\pm_\bn(\rho\Phi(\bx,\cdot))}\right\rangle_{H^{1/2}(\Gamma_\infty)\times H^{-1/2}(\Gamma_\infty)}, \qquad \bx\in D,\, \psi\in H^{1/2}_{\overline{\Gamma}},
\end{align*}
$\rho$ is any element of $\scrD_{1,\Gamma}(\R^n)$ with $\bx\not\in \supp{\rho}$, and
\begin{align}
\label{FundSoln}
\Phi(\bx,\by) :=
\begin{cases}
\dfrac{\re^{\ri k|\bx-\by|}}{4\pi |\bx-\by|}, & n=3,\\[3mm]
\dfrac{\ri }{4}H_0^{(1)}(k|\bx-\by|), & n=2,
\end{cases}
\qquad \bx,\by \in \R^n,
\end{align}
is the fundamental solution of the Helmholtz equation.

The following properties of $\cS_k$ and $\cD_k$ are well-known when the densities lie in $\tilde H^{-1/2}(\Gamma)$ and $\tilde H^{-1/2}(\Gamma)$ respectively. The extension to $H^{\pm 1/2}_{\overline{\Gamma}}$ can be carried out with the help of Remark \ref{rem:ss_equality}.
\begin{thm}
\label{LayerPotRegThm}
(i) For any $\phi\in H^{-1/2}_{\overline{\Gamma}}$ and $\psi\in H^{1/2}_{\overline{\Gamma}}$ the potentials $\cS_k\phi $ and $\cD_k\psi $ are twice-continuously differentiable in $D$, satisfy the Helmholtz equation in $D$, and satisfy the Sommerfeld radiation condition at infinity;

(ii) for any $\chi\in \scrD(\R^n)$ the following mappings are bounded:
\begin{align*}
\label{}
\chi\cS_k:H^{-1/2}_{\overline{\Gamma}}\to W^1(D),\qquad
\chi \cD_k:H^{1/2}_{\overline{\Gamma}}\to  W^1(D);
\end{align*}
(iii) the following jump relations hold for all $\phi\in H^{-1/2}_{\overline{\Gamma}}$,  $\psi\in H^{1/2}_{\overline{\Gamma}}$ and $\chi\in \scrD_{1,\Gamma}(\R^n)$:
\begin{align}
\label{JumpRelns1}
[\cS_k\phi]&=0,\\
\label{JumpRelns2}
\partial^\pm_\bn(\chi \cS_k\phi) &= \mp\phi/2, \quad \mbox{so that } [\pdonetext{(\cS_k\phi)}{\bn}]=-\phi, \\
\label{JumpRelns3}
\gamma^\pm(\chi \cD_k\psi) &=\pm \psi/2, \quad \mbox{so that }[\cD_k\psi]=\psi,\\ %
\label{JumpRelns4}
[\pdonetext{(\cD_k\psi)}{\bn}]&=0;
\end{align}
(iv) for $\phi\in \scrD(\Gamma)$ the following integral representations are valid:
\begin{align}
\label{SkPotIntRep}
\cS_k\phi (\bx) &= \int_\Gamma \Phi(\bx,\by) \phi(\by)\, \rd s(\by), \qquad \bx\in D,\\
\label{DkPotIntRep}
\cD_k\phi (\bx) &= \int_\Gamma \pdone{\Phi(\bx,\by)}{\bn(\by)} \phi(\by)\, \rd s(\by), \qquad \bx\in D.
\end{align}
\end{thm}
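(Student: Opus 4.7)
The plan is to prove the statements first for smooth densities $\phi,\psi\in\scrD(\Gamma)$, then extend by density to $\tilde H^{\pm 1/2}(\Gamma)$ where classical potential theory applies, and finally to extend to $H^{\pm 1/2}_{\overline\Gamma}$ by using Remark \ref{rem:ss_equality} to view any $\phi\in H^{-1/2}_{\overline\Gamma}$ as an element of $\tilde H^{-1/2}(\Gamma_\eps)$ for every $\eps > 0$, where $\Gamma_\eps:=\{\bx\in\Gamma_\infty:\dist(\bx,\Gamma)<\eps\}$ is a small open neighbourhood of $\Gamma$ inside $\Gamma_\infty$.

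\textbf{Smooth and $\tilde H$ cases.} For $\phi,\psi\in\scrD(\Gamma)$, the duality-pairing definitions of $\cS_k\phi$ and $\cD_k\psi$ reduce via \eqref{dualequiv} to the classical integral representations \eqref{SkPotIntRep}--\eqref{DkPotIntRep}, proving (iv). From those integrals, smoothness in $D$, the Helmholtz equation, and the Sommerfeld radiation condition follow by differentiation under the integral and the explicit form of $\Phi$; the $W^1(D)$-boundedness in (ii) and the jump relations in (iii) are standard layer-potential results (see \cite{McLean}). Since $\scrD(\Gamma)$ is dense in $\tilde H^{\pm 1/2}(\Gamma)$ by definition, and the traces $\gamma^\pm$ and normal derivatives $\partial_\bn^\pm$ are continuous on the relevant Sobolev spaces, all assertions extend by continuity to densities in $\tilde H^{\pm 1/2}(\Gamma)$.

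\textbf{Extension and main obstacle.} Given $\phi\in H^{-1/2}_{\overline\Gamma}$, Remark \ref{rem:ss_equality} yields $\phi\in\tilde H^{-1/2}(\Gamma_\eps)$ for every $\eps>0$, so the previous step with $\Gamma$ replaced by $\Gamma_\eps$ produces a potential $\cS_k^{(\eps)}\phi\in C^2(D_\eps)\cap W^1_{\rm loc}(D_\eps)$, where $D_\eps:=\R^n\setminus\overline{\Gamma_\eps}$, satisfying the analogues of (i)--(iii). For any $\bx\in D$, choose $\eps$ with $\bx\in D_\eps$ and $\rho\in\scrD_{1,\Gamma}(\R^n)$ with $\rho\equiv 1$ on a neighbourhood of $\overline{\Gamma_\eps}$ and $\bx\notin\supp\rho$: then $\cS_k^{(\eps)}\phi(\bx)$ coincides with the duality-pairing definition of $\cS_k\phi(\bx)$ in the paper, which is independent of the choice of $\eps$ and $\rho$. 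As $D=\bigcup_{\eps>0} D_\eps$, this consistently realises $\cS_k\phi$ on all of $D$, transferring (i) pointwise; the jump relations in (iii) extend similarly, with the supports of the relevant distributions shrinking from $\overline{\Gamma_\eps}$ to $\bigcap_\eps\overline{\Gamma_\eps}=\overline\Gamma$. The delicate point is (ii), upgrading boundedness from $W^1(D_\eps)$ to $W^1(D)$. The key observation is that $\Gamma_\eps\subset\Gamma_\infty$ implies $U^\pm\setminus\overline{\Gamma_\eps}=U^\pm$, so the classical estimate $\|\chi\cS_k^{(\eps)}\phi\|_{W^1(D_\eps)}\lesssim\|\phi\|_{\tilde H^{-1/2}(\Gamma_\eps)}\le\|\phi\|_{H^{-1/2}(\R^n)}$ already yields a uniform bound on $\chi\cS_k\phi|_{U^\pm}$ in $W^1(U^\pm)$ controlled by $\|\phi\|_{H^{-1/2}_{\overline\Gamma}}$. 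Combined with the vanishing jump $[\cS_k\phi]=0$ from (iii), which forces $\gamma^+(\chi\cS_k\phi)=\gamma^-(\chi\cS_k\phi)$ on $\Gamma_\infty\setminus\overline\Gamma$, the characterisation \eqref{W1DCharac} of $W^1(D)$ then delivers $\chi\cS_k\phi\in W^1(D)$ with the desired bound. The argument for $\chi\cD_k\psi$ is analogous, using instead that $[\cD_k\psi]=\psi\in H^{1/2}_{\overline\Gamma}$ is supported in $\overline\Gamma$, so the traces from $U^\pm$ again agree on $\Gamma_\infty\setminus\overline\Gamma$.
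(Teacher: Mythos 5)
Your proposal follows essentially the same route as the paper's own (sketch) proof: prove the results for smooth densities, extend by density to $\tilde H^{\pm1/2}$, lift to $H^{\pm1/2}_{\overline\Gamma}$ via Remark \ref{rem:ss_equality}, establish bounds in $W^1(U^\pm)$ first and then use the characterisation \eqref{W1DCharac} together with the vanishing of the relevant trace differences on $\Gamma_\infty\setminus\overline\Gamma$ to conclude membership in $W^1(D)$. The only real difference is one of detail: where you cite ``standard layer-potential results'' for the $W^1(U^\pm)$ boundedness, the paper sketches a specific argument (realising $H^1(U^\pm)$ as $(\tilde H^{-1}(U^\pm))^*$, rewriting the duality pairing by Fubini in terms of the Newton potential, and using the $H^{-1}\to H^1$ mapping property of the Newton potential, plus \cite[Lemma~4.3]{McLean} for the double-layer Neumann trace); the paper also explicitly records that the double- and adjoint-double-layer operators vanish on a flat screen, which is why the jump relations take the particularly simple form stated.
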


\begin{proof}(Sketch) The proof mostly follows standard arguments, but the extension to $H^{\pm 1/2}_{\overline{\Gamma}}$ requires us to use the fact (cf.\ Remark \ref{rem:ss_equality}) that $H^{\pm  1/2}_{\overline{\Gamma}}\subset \tilde{H}^{ \pm 1/2}(\Gamma_\eps)$ for any neighbourhood $\Gamma_\eps$ of $\Gamma$, so that elements of $H^{\pm 1/2}_{\overline{\Gamma}}$ can be approximated arbitrarily well by smooth functions whose support is arbitrarily close to $\Gamma$.  

To show the claimed regularity, one uses
continuity of the trace operators to show that the potentials converge uniformly on compact subsets of $D$, and that they are bounded in terms of the norm of their argument. The potentials are clearly infinitely differentiable in $D$ and satisfy the Helmholtz equation and the SRC for densities $\phi\in \scrD(\Gamma_\eps)$, then one can apply standard elliptic regularity results (e.g.\ \cite[Lemma 3.9]{GilbargTrudinger}), along with the uniform boundedness, to deduce that the same is also true for general densities. To show that the potentials are in $W^1_{\rm loc}(D)$, one first shows that they are in $W^1_{\rm loc}(U^\pm)$. To do this, one shows that the potentials, after multiplication by a cut-off function, live in $H^1(U^\pm)=\tilde{H}^{-1}(U^\pm)^*$. To show that they define elements of $\tilde{H}^{-1}(U^\pm)^*$, first consider a smooth argument, then use Fubini's Theorem to rewrite the duality pairing in terms of the trace operator and the Newton potential. Finally, use the fact that the Newton potential maps $H^{-1}(\R^n)$ to $H^1(\R^n)$. For the double layer potential one has to use \cite[Lemma 4.3]{McLean} to get a bound on the Neumann trace in a different norm to the one on $W^1(U^\pm;\Delta)$. Once boundedness has been shown, one can then extend to non-smooth densities. Since we first work in $U^\pm$ rather than the whole of $D$, the modification of the proof from $\tilde{H}^{\pm 1/2}(\Gamma_\eps)$ to $H^{\pm 1/2}_{\overline{\Gamma}}$ is straightforward. One then appeals to smoothness across $\Gamma_\infty\setminus \overline{\Gamma}$ to conclude that the potentials map into $W^1_{\rm loc}(D)$ using \rf{W1DCharac}. 

The jump relations and integral representations are standard and can be derived from the corresponding results for Lipschitz domains (see, e.g., \cite{ChGrLaSp:11}). We also make use of the fact that the double layer and adjoint double layer operators vanish on a flat screen. %
\end{proof}

We then have the following version of Green's representation theorem for the screen:
\begin{thm}%
\label{GreenRepThm}
Let $u\in C^2(D)\cap W^1_{\rm loc}(D)$ with $(\Delta + k^2)u=0$ in $D$ and suppose that $u$ satisfies the Sommerfeld radiation condition at infinity. Then
\begin{align}
\label{RepThmGeneral}
u(\bx) = -\cS_k\left[\pdonetext{u}{\bn}\right](\bx) + \cD_k[u](\bx), \qquad\bx\in D.
\end{align}
\end{thm}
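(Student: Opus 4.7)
The plan is to avoid a direct application of Green's second identity on the (potentially very irregular) region $D$, and instead deduce the representation by a uniqueness argument: we define the alleged error
\[
v(\bx) := u(\bx) + \cS_k\!\left[\pdonetext{u}{\bn}\right](\bx) - \cD_k[u](\bx), \qquad \bx\in D,
\]
and show $v\equiv 0$ by exploiting the jump relations of Theorem~\ref{LayerPotRegThm}(iii) together with the Sommerfeld radiation condition.

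First one must check that the jumps $[u]$ and $[\pdonetext{u}{\bn}]$ appearing on the right-hand side are well-defined elements of $H^{1/2}_{\overline{\Gamma}}$ and $H^{-1/2}_{\overline{\Gamma}}$, respectively. Since $u\in C^2(D)\cap W^1_{\mathrm{loc}}(D)$ with $\Delta u = -k^2 u$ in $D$, we have $u|_{U^\pm}\in W^1_{\mathrm{loc}}(U^\pm;\Delta)$, so the one-sided traces and normal derivatives exist in the generalized sense introduced in \S\ref{subsec:FuncSpacesTrace}. Because $D$ contains $\Gamma_\infty\setminus\overline\Gamma$ and $u$ is $C^2$ there, the two one-sided traces (and normal derivatives) agree on $\Gamma_\infty\setminus\overline\Gamma$ by \eqref{W1DCharac} and its normal-derivative analogue, so the jump distributions are supported in $\overline\Gamma$. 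With this in hand, Theorem~\ref{LayerPotRegThm}(i),(ii) gives $v\in C^2(D)\cap W^1_{\mathrm{loc}}(D)$, $(\Delta + k^2)v=0$ in $D$, and $v$ satisfies the SRC. Applying the jump relations \eqref{JumpRelns1}--\eqref{JumpRelns4} termwise (using linearity of $[\cdot]$ and $[\pdonetext{\cdot}{\bn}]$) gives
\[
[v] = [u] + 0 - [u] = 0, \qquad \left[\pdonetext{v}{\bn}\right] = \left[\pdonetext{u}{\bn}\right] + \left(-\left[\pdonetext{u}{\bn}\right]\right) - 0 = 0.
\]

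The next step is to promote $v$ from a function on $D$ with vanishing distributional jumps to a \emph{global} smooth solution of the Helmholtz equation on $\R^n$. Concretely, extend $v$ by zero on $\overline\Gamma$ (a null modification in $L^2_{\mathrm{loc}}$); then for any $\varphi\in\scrD(\R^n)$, split $\int_{\R^n}v\,\overline{(\Delta+k^2)\varphi}\,\rd\bx$ into contributions over $U^+$ and $U^-$ and integrate by parts twice in each half-space using the generalised Green's identity of \S\ref{subsec:FuncSpacesTrace}. The volume terms vanish because $v$ solves Helmholtz in each $U^\pm$, and the boundary contributions on $\Gamma_\infty$ from $U^+$ and $U^-$ cancel exactly because both $[v]$ and $[\pdonetext{v}{\bn}]$ vanish as distributions on $\Gamma_\infty$. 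Hence $(\Delta+k^2)v=0$ on $\R^n$ in the distributional sense, and elliptic regularity (Weyl's lemma) yields $v\in C^\infty(\R^n)$. Now $v$ is an entire solution of the Helmholtz equation satisfying the Sommerfeld radiation condition; by the standard Rellich--Sommerfeld uniqueness argument, $v\equiv 0$, which is \eqref{RepThmGeneral}.

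The main obstacle is the promotion step: because $\overline\Gamma$ may have positive Lebesgue measure and a fractal boundary, one cannot rely on classical regularity results about isolated singular sets. The crucial fact is that $[v]$ and $[\pdonetext{v}{\bn}]$ are zero as elements of $H^{\pm 1/2}_{\overline\Gamma}$, not merely on $\Gamma_\infty\setminus\overline\Gamma$; this is precisely the distributional statement needed to make the two-half-space integration by parts above produce \emph{no} contribution supported on $\overline\Gamma$, so that elliptic regularity applies globally. The remaining steps---verifying $v\in C^2(D)\cap W^1_{\mathrm{loc}}(D)$, the SRC for $v$, and the Rellich argument---are standard once the jump computation and the distributional extension have been carried out.
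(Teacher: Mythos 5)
Your proposal is correct, but it takes a genuinely different route from the paper's proof. The paper argues directly: it applies the standard Green's representation theorem in the truncated Lipschitz half-balls $U^\pm_R := U^\pm\cap B_R(0)$, sums the two identities, observes that the layer-potential contributions from $\Gamma_\infty\cap B_R(0)\setminus\overline\Gamma$ cancel because $u\in C^2(D)$, and then lets $R\to\infty$, the radiation condition killing the contribution from $\partial B_R(0)$; continuity then extends the identity to $\bx\in\Gamma_\infty\setminus\overline\Gamma$. You instead form the defect $v := u + \cS_k[\pdonetext{u}{\bn}] - \cD_k[u]$, show $[v]=0$ and $[\pdonetext{v}{\bn}]=0$ by termwise use of \eqref{JumpRelns1}--\eqref{JumpRelns4}, glue $v$ across $\Gamma_\infty$ into a global distributional (hence, by Weyl's lemma, smooth) radiating solution of the Helmholtz equation on $\R^n$, and conclude $v\equiv 0$ by Rellich--Sommerfeld uniqueness. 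Both routes are sound; you trade the paper's $R\to\infty$ limit for a transmission-type gluing step. That gluing step is legitimate precisely because the jumps vanish as elements of $H^{\pm 1/2}_{\overline\Gamma}\subset H^{\pm 1/2}(\Gamma_\infty)$, so that after multiplying by a cutoff $\chi\in\scrD(\R^n)$ the half-space Green identities of \S\ref{subsec:FuncSpacesTrace} yield boundary pairings that cancel between $U^+$ and $U^-$ over the whole of $\Gamma_\infty$, not merely over $\Gamma_\infty\setminus\overline\Gamma$ --- the point you correctly flag. The only detail left implicit is that the localization requires $\chi v\in W^1(U^\pm;\Delta)$, which holds since $v\in C^2(D)\cap W^1_{\rm loc}(D)$ with $\Delta v=-k^2 v$ in $D$, and Theorem~\ref{LayerPotRegThm}(ii) controls the layer-potential parts.
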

\begin{proof}
First let $\bx\in U^+\cup U^-$. Apply the standard Green's representation theorem \cite[Theorem 2.20]{ChGrLaSp:11} in the Lipschitz domains $U^\pm_R:=U^\pm \cap B_R(0)$, where $R>0$ is large enough such that $\overline{\Gamma}\subset B_R(0)$. Summing the resulting equations gives $u(\bx)$ as a sum of layer potentials defined over the two hemispherical boundaries $\partial U^\pm_R$. The contributions from $\Gamma_\infty\cap B_R(0)\setminus \overline{\Gamma}$ cancel because $u\in C^2(D)$, so that $u(\bx)$ is a sum of layer potentials on $\partial B_R(0)$ and on $\overline{\Gamma}$. The formula \rf{RepThmGeneral} then follows from letting $R\to \infty$, with the contribution from $\partial B_R(0)$ tending to zero because $u$ satisfies the radiation condition (cf.~\cite{CoKr:83}). The extension to $\bx\in \Gamma_\infty\setminus \overline{\Gamma}$ follows by continuity.
\end{proof}

We also define the single-layer and hypersingular boundary integral operators
\begin{align*}
\label{}
S_k:H^{-1/2}_{\overline{\Gamma}}\to H^{1/2}(\Gamma),\qquad T_k:H^{1/2}_{\overline{\Gamma}}\to H^{-1/2}(\Gamma),
\end{align*}
by
\begin{align*}
S_k\phi &:=\gamma^\pm(\chi\cS_k\phi)|_\Gamma, \qquad \phi\in H^{-1/2}_{\overline{\Gamma}},\\
T_k\phi &:=\partial^\pm_\bn(\chi\cD_k\psi)|_\Gamma, \qquad \psi\in H^{1/2}_{\overline{\Gamma}},
\end{align*}
where $\chi$ is any element of $\scrD_{1,\Gamma}(\R^n)$, and either of the $\pm$ traces may be taken (cf.\ \rf{JumpRelns1} and \rf{JumpRelns4}).

For $\phi\in \scrD(\Gamma)$ the following integral representations are valid:
\begin{align}
\label{SDef}
S_k\phi (\bx) &= \int_\Gamma \Phi(\bx,\by) \phi(\by)\, \rd s(\by), \qquad \bx\in \Gamma,\\
\label{TDef}
T_k\phi (\bx) &= \pdone{}{\bn(\bx)}\int_\Gamma \pdone{\Phi(\bx,\by)}{\bn(\by)} \phi(\by)\, \rd s(\by), \qquad \bx\in \Gamma.
\end{align}

\subsection{Boundary value problems}

We now recall the boundary value problems $\sD$ and $\sN$ introduced in Definitions \ref{def:SPD} and \ref{def:SPN}; for ease of reference we restate them here, with the boundary conditions \rf{eqn:bc1} and \rf{eqn:bc2} stated more precisely in terms of traces: 
\begin{defn}[Problem $\sD$]
Given $g_{\sD}\in H^{1/2}(\Gamma)$, find $u\in C^2\left(D\right)\cap  W^1_{\mathrm{loc}}(D)$ such that
\begin{align}
  \Delta u+k^2u  &=  0, \quad \;\; \mbox{in }D, \label{eqn:HE1restate} \\
 \gamma^\pm (\chi u)\vert_\Gamma &=g_{\sD}, \quad \mbox{for any }\chi\in\scrD_{1,\Gamma}(\R^n),\label{eqn:bc1restate}
\end{align}
and $u$ satisfies the Sommerfeld radiation condition. 
\end{defn}
\begin{defn}[Problem $\sN$]
Given $g_{\sN}\in H^{-1/2}(\Gamma)$, find $u\in C^2\left(D\right)\cap W^1_{\mathrm{loc}}(D)$ such that
\begin{align}
  \Delta u+k^2u  &=  0, \quad\;\; \mbox{in }D, & \label{eqn:HE2restate} \\
\partial_\bn^\pm (\chi u)\vert_\Gamma &=g_{\sN}, \quad \mbox{for any }\chi\in\scrD_{1,\Gamma}(\R^n),& \label{eqn:bc2restate}%
\end{align}
and $u$ satisfies the Sommerfeld radiation condition. 
\end{defn}

Following the standard direct boundary integral equation approach, we would like to use Theorem \ref{GreenRepThm} to represent the solution of problem $\sD$ (assuming it exists) in the form
\begin{align}
\label{eqn:SLPRep}
u(\bx )= -\cS_k\left[\pdonetext{u}{\bn}\right](\bx), \qquad\bx\in D.
\end{align}
Of course, deriving \rf{eqn:SLPRep} from the general representation formula \rf{RepThmGeneral} requires us to show that 
\begin{align}
\label{eqn:ujumpvanishes}
[u]=0. 
\end{align}
Now, by definition we have that $[u]\in H^{1/2}_{\overline\Gamma}$, and the boundary condition \rf{eqn:bc1restate} implies that $[u]|_\Gamma=0$. So it must hold that $[u]\in H^{1/2}_{\partial\Gamma}$. If $\partial\Gamma$ is $1/2$-null, i.e.\ $H^{1/2}_{\partial\Gamma}=\{0\}$, then \rf{eqn:ujumpvanishes} immediately follows. This holds, for example, if $\Gamma$ is $C^0$ (in particular if $\Gamma$ is Lipschitz), by Lemma \ref{lem:polarity}\rf{kk0}. But if $\partial\Gamma$ is not $1/2$-null then any non-zero $\psi\in H^{1/2}_{\partial\Gamma}\subset H^{1/2}_{\overline\Gamma}$ provides, by Theorem \ref{LayerPotRegThm}, a non-trivial solution (namely $\cD_k\psi $) of the homogeneous Dirichlet problem (i.e.\ problem $\sD$ with $g_{\sD}=0$). So the solution to problem $\sD$ for general $\Gamma$ and general $g_{\sD}\in H^{1/2}(\Gamma)$ is not unique. 

Similarly, we would like to represent the solution of problem $\sN$ (assuming it exists) in the form
\begin{align}
\label{eqn:DLPRep}
 u(\bx ) =  \cD_k[u](\bx), \qquad\bx\in D,
\end{align}
and deriving \rf{eqn:DLPRep} from \rf{RepThmGeneral} requires us to show that 
\begin{align}
\label{eqn:dudnjumpvanishes}
[\pdonetext{u}{\bn}]=0. 
\end{align}
By definition we have that $[\pdonetext{u}{\bn}]\in H^{-1/2}_{\overline\Gamma}$, and the boundary condition \rf{eqn:bc2restate} gives $[\pdonetext{u}{\bn}]|_\Gamma=0$. So it must hold that $[\pdonetext{u}{\bn}]\in H^{-1/2}_{\partial\Gamma}$. If $\partial\Gamma$ is $(-1/2)$-null, i.e.\ $H^{-1/2}_{\partial\Gamma}=\{0\}$, then \rf{eqn:dudnjumpvanishes} immediately follows. This holds, for example, if $\Gamma$ is Lipschitz, by Lemma \ref{lem:polarity}\rf{kk}. But if $\partial\Gamma$ is not $(-1/2)$-null then any non-zero $\phi\in H^{-1/2}_{\partial\Gamma}\subset H^{-1/2}_{\overline\Gamma}$ provides, by Theorem \ref{LayerPotRegThm}, a non-trivial solution (namely $\cS_k\phi$) of the homogeneous Neumann problem (i.e.\ problem $\sN$ with $g_{\sN}=0$). So the solution to problem $\sN$ for general $\Gamma$ and general $g_{\sN}\in H^{-1/2}(\Gamma)$ is not unique. 

To deal with this possible nonuniqueness we modify the BVPs $\sD$ and $\sN$ by requiring that their solutions satisfy the jump conditions \rf{eqn:ujumpvanishes} and \rf{eqn:dudnjumpvanishes}, respectively. 

\begin{defn}
Let $\sD'$ and $\sN'$ denote problems $\sD$ and $\sN$, supplemented respectively with the additional constraints \rf{eqn:ujumpvanishes} and \rf{eqn:dudnjumpvanishes}.
\end{defn}

It then follows straight from Theorems \ref{LayerPotRegThm} and \ref{GreenRepThm} that the modified BVPs $\sD'$ and $\sN'$ are equivalent to the usual boundary integral equations involving the operators $S_k$ and $T_k$, respectively.
\begin{defn}[Problem $\sS$]
Given $g_{\sD}\in H^{1/2}(\Gamma)$, find $\phi\in H^{-1/2}_{\overline{\Gamma}}$ such that
\begin{align}
\label{BIE_sl}
-S_k\phi= g_{\sD}.
\end{align}
\end{defn}
\begin{defn}[Problem $\sT$]
Given $g_{\sN}\in H^{-1/2}(\Gamma)$, find $\psi \in H^{1/2}_{\overline{\Gamma}}$ such that
\begin{align}
\label{BIE_hyp}
T_k\psi = g_{\sN}.
\end{align}
\end{defn}

\begin{thm}
\label{DirEquivThm}
Suppose that $u$ is a solution of problem $\sD'$. Then the representation formula 
\rf{eqn:SLPRep}
holds, and $[\pdonetext{u}{\bn}]\in H^{-1/2}_{\overline{\Gamma}}$ satisfies problem $\sS$. Conversely, suppose that $\phi\in H^{-1/2}_{\overline{\Gamma}}$ satisfies problem $\sS$. Then $u:=-\cS_k\phi$ satisfies problem $\sD'$, and $[\pdonetext{u}{\bn}]=\phi$.
\end{thm}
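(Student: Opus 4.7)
The plan is to prove both directions directly using Green's representation theorem (Theorem \ref{GreenRepThm}), the mapping and jump properties collected in Theorem \ref{LayerPotRegThm}, and the definition of $S_k$ as the Dirichlet trace of $\cS_k$.

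For the forward direction, suppose $u$ solves $\sD'$. Since $u\in C^2(D)\cap W^1_{\rm loc}(D)$ is a solution of the Helmholtz equation in $D$ satisfying the Sommerfeld radiation condition, Theorem \ref{GreenRepThm} yields
\[
u(\bx) = -\cS_k\left[\pdonetext{u}{\bn}\right](\bx) + \cD_k[u](\bx), \qquad \bx\in D.
\]
By the additional constraint defining $\sD'$, we have $[u]=0$, so the double-layer term vanishes and \rf{eqn:SLPRep} follows. Setting $\phi:=[\pdonetext{u}{\bn}]\in H^{-1/2}_{\overline{\Gamma}}$ and applying $\gamma^\pm(\chi\,\cdot)|_\Gamma$ for any $\chi\in\scrD_{1,\Gamma}(\R^n)$ to both sides of \rf{eqn:SLPRep}, the definition of $S_k$ gives $\gamma^\pm(\chi u)|_\Gamma=-S_k\phi$. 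Comparing with the boundary condition \rf{eqn:bc1restate} gives $-S_k\phi=g_{\sD}$, so $\phi$ solves problem $\sS$.

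For the converse, suppose $\phi\in H^{-1/2}_{\overline{\Gamma}}$ satisfies $-S_k\phi=g_{\sD}$, and set $u:=-\cS_k\phi$. Theorem \ref{LayerPotRegThm}(i),(ii) shows that $u\in C^2(D)\cap W^1_{\rm loc}(D)$, satisfies the Helmholtz equation in $D$, and satisfies the Sommerfeld radiation condition. The jump relation \rf{JumpRelns1} gives $[u]=-[\cS_k\phi]=0$, which is the additional constraint in $\sD'$. Taking the Dirichlet trace, by the definition of $S_k$,
\[
\gamma^\pm(\chi u)\vert_\Gamma \;=\; -\gamma^\pm(\chi\cS_k\phi)\vert_\Gamma \;=\; -S_k\phi \;=\; g_{\sD},
\]
so the boundary condition \rf{eqn:bc1restate} holds; hence $u$ solves $\sD'$. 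Finally, the jump relation \rf{JumpRelns2} yields $[\pdonetext{u}{\bn}]=-[\pdonetext{(\cS_k\phi)}{\bn}]=\phi$, completing the proof.

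The argument is essentially a bookkeeping exercise; the only subtle point is that the Dirichlet and Neumann traces are understood in the extended sense of \S\ref{subsec:FuncSpacesTrace} (involving a cut-off $\chi\in\scrD_{1,\Gamma}(\R^n)$), but this is already built into both the definition of $S_k$ and the jump relations, so no extra work is required here. The nontrivial content lies in the preceding theorems; there is no genuine obstacle specific to this statement.
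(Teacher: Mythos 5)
Your proof is correct and matches the argument the paper has in mind: the authors state that Theorem \ref{DirEquivThm} ``follows straight from Theorems \ref{LayerPotRegThm} and \ref{GreenRepThm}'' without writing out the details, and what you have written is precisely that bookkeeping, applying the representation formula with $[u]=0$ in one direction and the mapping and jump properties of $\cS_k$ together with the definition of $S_k$ as a Dirichlet trace in the other.
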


\begin{thm}
\label{NeuEquivThm}
Suppose that $u$ is a solution of problem $\sN'$. Then the representation formula 
\rf{eqn:DLPRep}
holds, and $[u]\in H^{1/2}_{\overline{\Gamma}}$ satisfies problem $\sT$. Conversely, suppose that $\psi\in H^{1/2}_{\overline{\Gamma}}$ satisfies problem $\sT$. Then $u:=\cD_k\psi$ satisfies problem $\sN'$, and $[u]=\psi$.
\end{thm}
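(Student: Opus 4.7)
The plan is to mirror the structure of Theorem \ref{DirEquivThm}, with the roles of $[u]$ and $[\pdonetext{u}{\bn}]$ interchanged. Problem $\sN'$ has been arranged precisely so that the extra jump condition \rf{eqn:dudnjumpvanishes} is exactly what is needed to make the pure double-layer representation \rf{eqn:DLPRep} fall out of Green's formula; this observation is the heart of the argument, and the rest is essentially mechanical.

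For the forward direction, I start from $u$ solving $\sN'$ and first verify that the jumps $[u]\in H^{1/2}_{\overline{\Gamma}}$ and $[\pdonetext{u}{\bn}]\in H^{-1/2}_{\overline{\Gamma}}$ are well-defined via the extended trace operators of \S\ref{subsec:FuncSpacesTrace}: since $u\in W^1_{\mathrm{loc}}(D)$ with $\Delta u = -k^2 u \in L^2_{\mathrm{loc}}(D)$, the restrictions to $U^\pm$ lie in $W^1_{\mathrm{loc}}(U^\pm;\Delta)$, so $\partial^\pm_\bn(\chi u)$ and $\gamma^\pm(\chi u)$ make sense for every $\chi\in\scrD(\R^n)$. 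Applying Theorem \ref{GreenRepThm} and invoking the constraint $[\pdonetext{u}{\bn}]=0$ built into $\sN'$ kills the single-layer term and yields $u = \cD_k[u]$, which is \rf{eqn:DLPRep}. To see that $[u]$ solves problem $\sT$, I apply $\partial^\pm_\bn(\chi\,\cdot\,)|_\Gamma$ to both sides of this representation: the left-hand side equals $g_\sN$ by the Neumann boundary condition \rf{eqn:bc2restate}, while the right-hand side is $T_k[u]$ by the definition of the hypersingular operator, so $T_k[u]=g_\sN$ as required.

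For the converse, given $\psi\in H^{1/2}_{\overline{\Gamma}}$ with $T_k\psi = g_\sN$, I set $u:=\cD_k\psi$. Theorem \ref{LayerPotRegThm}(i) gives immediately that $u\in C^2(D)\cap W^1_{\mathrm{loc}}(D)$, satisfies the Helmholtz equation in $D$, and obeys the Sommerfeld radiation condition. The jump relations \rf{JumpRelns3} and \rf{JumpRelns4} yield $[u]=\psi$ and $[\pdonetext{u}{\bn}]=0$, the latter being exactly the extra constraint of $\sN'$. Finally, the Neumann boundary condition is immediate from $\partial^\pm_\bn(\chi u)|_\Gamma = T_k\psi = g_\sN$. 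There is no serious obstacle here: all the hard analytic work on the non-Lipschitz screen---the careful definition of trace and jump operators, the extension of the layer potentials to $H^{\pm 1/2}_{\overline{\Gamma}}$ via the density of smooth functions in neighbourhoods of $\Gamma$ (Remark \ref{rem:ss_equality}), the jump relations, and the screen version of Green's representation formula---was already carried out in Theorems \ref{LayerPotRegThm} and \ref{GreenRepThm}; the present theorem is just the clean packaging of those results as the equivalence between the modified BVP $\sN'$ and the hypersingular BIE $\sT$.
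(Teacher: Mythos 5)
Your proof is correct and follows exactly the route the paper intends: the paper states that Theorems \ref{DirEquivThm} and \ref{NeuEquivThm} "follow straight from Theorems \ref{LayerPotRegThm} and \ref{GreenRepThm}," and your argument — Green's representation plus the $[\pdonetext{u}{\bn}]=0$ constraint for the forward direction, and Theorem \ref{LayerPotRegThm}'s regularity, jump relations \rf{JumpRelns3}–\rf{JumpRelns4}, and the definition of $T_k$ for the converse — is precisely that deduction, made explicit.
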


The question of the unique solvability (or otherwise) of problems $\sS$ and $\sT$, and hence (by Theorems \ref{DirEquivThm} and \ref{NeuEquivThm}) of problems $\sD'$ and $\sN'$, is answered by the following two theorems, which follow from Theorems \ref{ThmSNormSmooth}, \ref{ThmCoerc}, \ref{ThmTNormSmooth} and \ref{ThmTCoercive} (where the dependence of the continuity and coercivity constants on both $k$ and $\Gamma$ is stated explicitly, the continuity results being shown to hold on Sobolev spaces of arbitrary real index $s$). 
We emphasize that these results all hold with $\Gamma$ an arbitrary non-empty open subset of $\Gamma_\infty$. 

\begin{thm}
\label{ThmSk}
For every $k>0$ the single-layer operator $S_k:H^{-1/2}_{\overline\Gamma} \to H^{1/2}(\Gamma)$ is continuous, and is coercive as an operator $S_k:\tilde{H}^{-1/2}(\Gamma) \to H^{1/2}(\Gamma) \cong (\tilde{H}^{-1/2}(\Gamma))^*$.
\end{thm}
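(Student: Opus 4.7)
My plan is to prove continuity by composing the known continuity of the single-layer potential with the Dirichlet trace, and to prove coercivity by exploiting the planarity of $\Gamma$ through an explicit Fourier representation of the sesquilinear form $\langle S_k\phi,\phi\rangle$.

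For continuity, fix any $\chi\in\scrD_{1,\Gamma}(\R^n)$. By Theorem \ref{LayerPotRegThm}(ii) the map $\chi\cS_k:H^{-1/2}_{\overline\Gamma}\to W^1(D)$ is bounded, hence $\chi\cS_k\phi|_{U^\pm}\in W^1(U^\pm)$ with a bound independent of the $\pm$ choice, and the jump relation \rf{JumpRelns1} shows the two traces agree and equal $S_k\phi$ after restriction to $\Gamma$. Composing with the bounded trace $\gamma^\pm:W^1(U^\pm)\to H^{1/2}(\Gamma_\infty)$ and then the bounded restriction $H^{1/2}(\Gamma_\infty)\to H^{1/2}(\Gamma)$ yields the required bound $\|S_k\phi\|_{H^{1/2}(\Gamma)}\lesssim \|\phi\|_{H^{-1/2}(\R^n)}$.

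For coercivity, the essential idea is that on the flat screen the operator $S_k$ admits a Fourier symbol. Any $\phi\in\tilde H^{-1/2}(\Gamma)$ extends (by the definition of $\tilde H^{-1/2}(\Gamma)$ as a closed subspace of $H^{-1/2}(\R^n)$ supported in $\overline\Gamma\subset\Gamma_\infty$) to a distribution on $\Gamma_\infty\cong\R^{n-1}$ whose Fourier transform $\hat\phi$ is a locally integrable function on $\R^{n-1}$. A standard calculation (to be made rigorous by approximating $\phi$ by $\scrD(\Gamma)$ densities and taking limits, justified by the continuity of $S_k$) gives the representation
\begin{equation*}
\langle S_k\phi,\phi\rangle_{H^{1/2}(\Gamma)\times\tilde H^{-1/2}(\Gamma)}
=\int_{\R^{n-1}}\frac{|\hat\phi(\bxi)|^2}{2\,Z(\bxi)}\,\rd\bxi,
\end{equation*}
where $Z(\bxi):=\sqrt{|\bxi|^2-k^2}$ with the branch having $\mathrm{Re}(Z)\geq 0$ and $\mathrm{Im}(Z)\leq 0$, so that $1/Z$ is real and positive for $|\bxi|>k$ and equals $\ri/\sqrt{k^2-|\bxi|^2}$ (positive imaginary) for $|\bxi|<k$.

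Hence both $\mathrm{Re}\langle S_k\phi,\phi\rangle$ and $\mathrm{Im}\langle S_k\phi,\phi\rangle$ are non-negative, so $|\langle S_k\phi,\phi\rangle|\geq \tfrac{1}{\sqrt 2}\bigl(\mathrm{Re}\langle S_k\phi,\phi\rangle+\mathrm{Im}\langle S_k\phi,\phi\rangle\bigr)$, which combines into
\begin{equation*}
|\langle S_k\phi,\phi\rangle|\geq \frac{1}{2\sqrt 2}\int_{\R^{n-1}}\frac{|\hat\phi(\bxi)|^2}{\sqrt{\bigl||\bxi|^2-k^2\bigr|}}\,\rd\bxi.
\end{equation*}
Finally, the elementary inequality $\sqrt{||\bxi|^2-k^2|}\leq \sqrt{|\bxi|^2+k^2}$ gives $1/\sqrt{||\bxi|^2-k^2|}\geq (k^2+|\bxi|^2)^{-1/2}$, so the right-hand side dominates a constant multiple of $\|\phi\|_{H^{-1/2}_k(\R^{n-1})}^2$, which in turn controls $\|\phi\|_{\tilde H^{-1/2}(\Gamma)}^2$. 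The main technical obstacle is a clean justification of the Fourier-symbol representation for densities that only belong to $\tilde H^{-1/2}(\Gamma)$ rather than to $\scrD(\Gamma)$; this is handled by density (using that $\scrD(\Gamma)$ is dense in $\tilde H^{-1/2}(\Gamma)$ by definition) together with the continuity estimate proved above. Everything else is essentially bookkeeping with the Plancherel identity, which is why, as the paper remarks, this coercivity result is considerably easier than its hypersingular counterpart.
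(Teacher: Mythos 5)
Your proposal is correct, and the coercivity half is essentially identical to the paper's Theorem~\ref{ThmCoerc}: you use the Fourier-symbol representation~\rf{SIPFourierRep} (your $1/(2Z)$ with the $\mathrm{Re}\geq 0$, $\mathrm{Im}\leq 0$ branch is exactly the paper's $\ri/(2Z)$ with the branch in~\rf{ZDef}), observe that the real and imaginary parts of the integrand are both non-negative, apply $|z|\geq(\mathrm{Re}\,z+\mathrm{Im}\,z)/\sqrt{2}$, and finish with $\sqrt{|\,|\bxi|^2-k^2|}\leq\sqrt{k^2+|\bxi|^2}$, yielding the same constant $1/(2\sqrt{2})$; the density argument from $\scrD(\Gamma)$ is also how the paper handles the extension to $\tilde H^{-1/2}(\Gamma)$.

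Where you genuinely diverge is on continuity. The paper establishes it by proving the sharper Theorem~\ref{ThmSNormSmooth}: it truncates the fundamental solution to $\Phi_L$, bounds $|\widehat{\Phi_L}|(k^2+|\bxi|^2)^{1/2}$ pointwise (Lemma~\ref{LemPhiFT}), and thereby gets a wavenumber-explicit operator bound $S_k:H^s_{\overline\Gamma}\to H^{s+1}(\Gamma)$ valid for all $s\in\R$. You instead take the shorter route of composing the boundedness of $\chi\cS_k:H^{-1/2}_{\overline\Gamma}\to W^1(D)$ from Theorem~\ref{LayerPotRegThm}(ii) with the Dirichlet trace $\gamma^\pm:W^1(U^\pm)\to H^{1/2}(\Gamma_\infty)$ and the restriction to $\Gamma$, with the jump relation~\rf{JumpRelns1} ensuring the two traces coincide on $\Gamma$. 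This is a valid and more economical proof of the qualitative statement in Theorem~\ref{ThmSk}, but it gives neither the $k$-explicit constant nor the full scale of Sobolev indices that the paper needs later (and uses in \S\ref{sec:NormEstimates} and in the BEM analysis of~\cite{ScreenBEM}). So the trade-off is: your continuity argument is more elementary and self-contained relative to what has already been proved, while the paper's Fourier-side argument is the one that delivers the quantitative information the paper is actually after.
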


\begin{thm}
\label{ThmTk}
For every $k>0$ the hypersingular operator $T_k:H^{1/2}_{\overline\Gamma} \to H^{-1/2}(\Gamma)$ is continuous, and is coercive as an operator $T_k:\tilde{H}^{1/2}(\Gamma) \to H^{-1/2}(\Gamma) \cong (\tilde{H}^{1/2}(\Gamma))^*$.
\end{thm}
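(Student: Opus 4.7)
The plan is to exploit the Fourier representation of $T_k$ on a planar screen, derived in Section \ref{sec:FourierRep}. On the hyperplane $\Gamma_\infty\cong\R^{n-1}$, the hypersingular operator acts as a Fourier multiplier with symbol (up to constants) $\tfrac{1}{2}(|\bxi|^2-k^2)^{1/2}$, where the square root is chosen with branch cut so that $\Im\sqrt{z}\le 0$ for $z$ in the lower half plane (to enforce the Sommerfeld radiation condition). Concretely, for $\psi,\varphi\in\tilde H^{1/2}(\Gamma)$,
\begin{equation*}
\langle T_k\psi,\varphi\rangle_{H^{-1/2}(\Gamma)\times\tilde H^{1/2}(\Gamma)}
\;=\;\tfrac12\int_{\R^{n-1}}\zeta_k(\bxi)\,\hat\psi(\bxi)\overline{\hat\varphi(\bxi)}\,\rd\bxi,
\end{equation*}
where $\zeta_k(\bxi)=(|\bxi|^2-k^2)^{1/2}$ for $|\bxi|>k$ and $\zeta_k(\bxi)=-\ri(k^2-|\bxi|^2)^{1/2}$ for $|\bxi|<k$. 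Since $\zeta_k$ is well-defined on the support $\overline\Gamma$ of $\psi$, which is compact, such an integral representation in Fourier variables makes sense.

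Continuity will be routine. Since $|\zeta_k(\bxi)|=||\bxi|^2-k^2|^{1/2}\le(|\bxi|^2+k^2)^{1/2}$, the Cauchy--Schwarz inequality applied to the displayed integral immediately yields
$|\langle T_k\psi,\varphi\rangle|\le\tfrac12\|\psi\|_{\tilde H^{1/2}_k(\Gamma)}\|\varphi\|_{\tilde H^{1/2}_k(\Gamma)}$; by Theorem \ref{DualityTheorem} this gives boundedness $T_k:\tilde H^{1/2}(\Gamma)\to H^{-1/2}(\Gamma)$ with explicit $k$-independent constant. Extension to the full domain $H^{1/2}_{\overline\Gamma}$ uses Remark \ref{rem:ss_equality}, approximating by elements of $\tilde H^{1/2}(\Gamma_\eps)$.

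For coercivity, writing $a(\psi,\psi):=\langle T_k\psi,\psi\rangle=A-\ri B$, one reads off
\begin{equation*}
A=\tfrac12\!\!\int_{|\bxi|>k}\!\!(|\bxi|^2-k^2)^{1/2}|\hat\psi|^2\,\rd\bxi\ge 0,\qquad
B=\tfrac12\!\!\int_{|\bxi|<k}\!\!(k^2-|\bxi|^2)^{1/2}|\hat\psi|^2\,\rd\bxi\ge 0,
\end{equation*}
so $|a(\psi,\psi)|\ge (A+B)/\sqrt2$. The plan is to show $A+B\ge\gamma\|\psi\|^2_{\tilde H^{1/2}_k(\Gamma)}$. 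On the ``good'' regions $|\bxi|\ge 2k$ and $|\bxi|\le k/2$ this holds pointwise in the integrand, since there the ratio $||\bxi|^2-k^2|^{1/2}/(|\bxi|^2+k^2)^{1/2}$ is bounded below by a universal constant (e.g.\ $\sqrt{3/5}$). The real obstacle is the annulus $k/2\le|\bxi|\le 2k$, where the symbol $\zeta_k$ vanishes on the sphere $|\bxi|=k$ and pointwise domination simply fails.

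This is where one must use that $\psi\in\tilde H^{1/2}(\Gamma)$ is \emph{compactly supported}, so that $\hat\psi$ is the restriction of an entire function of exponential type (Paley--Wiener). The approach, following the strategy of Ha-Duong \cite{Ha-Du:92} but with sharper tracking of the $k$-dependence, is to write an interpolation-type inequality bounding $\int_{k/2<|\bxi|<2k}|\hat\psi|^2\rd\bxi$ by a product of $A+B$ and $\|\psi\|^2_{L^2(\Gamma)}$ (the latter controlled by $\diam(\Gamma)$ via the compact support of $\psi$), and absorbing the resulting term into the left-hand side. Equivalently, one may invoke the Maue identity, which on a flat screen translates in Fourier variables to $\tfrac12(|\bxi|^2-k^2)^{1/2}=\tfrac{|\bxi|^2-k^2}{2(|\bxi|^2-k^2)^{1/2}}$, relating $T_k$ to $S_k$ applied to derivatives of $\psi$; the coercivity of $S_k$ (already established in Theorem \ref{ThmSk}) combined with the Friedrichs-type control $k\|\psi\|_{L^2(\Gamma)}\le\|\psi\|_{\tilde H^{1/2}_k(\Gamma)}$ then propagates to $T_k$. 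The hard part is making either of these strategies yield a coercivity constant with explicit, and as sharp as possible, dependence on $k$ and on geometric data of $\Gamma$ (e.g.\ $\diam(\Gamma)$), which is the refinement over \cite{Ha-Du:92} claimed in the paper.
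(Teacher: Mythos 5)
Your treatment of continuity is exactly the paper's: the Fourier multiplier $\ri Z(\bxi)/2$ (what you call $\zeta_k/2$ up to constants) satisfies $|Z(\bxi)| = \bigl||\bxi|^2 - k^2\bigr|^{1/2} \le (k^2+|\bxi|^2)^{1/2}$ pointwise, giving the bound $\|T_k\phi\|_{H^{-1/2}_k(\Gamma)} \le \tfrac12\|\phi\|_{H^{1/2}_k(\R^{n-1})}$, with the extension from $\tilde H^{1/2}(\Gamma)$ to $H^{1/2}_{\overline\Gamma}$ by Remark~\ref{rem:ss_equality} as you say. That matches Theorem~\ref{ThmTNormSmooth}.

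For coercivity your first route is the right one, but two aspects of the way you've set it up would not yield the paper's result (Theorem~\ref{ThmTCoercive}), and one of your two suggested mechanisms does not work at all. First, the fixed annulus $k/2\le|\bxi|\le 2k$ is too coarse: the paper splits into $\{0<|\bxi|<k-\eps\}\cup\{k-\eps<|\bxi|<k\}\cup\{k<|\bxi|<k+\eps\}\cup\{|\bxi|>k+\eps\}$ with a free parameter $\eps\in(0,k/3)$, and the wavenumber-explicit coercivity constant comes precisely from optimizing $\eps = c_\eps k^{p_\eps}$ at the end; with a fixed annulus of radial width $\sim k$ the correction term generated by the interpolation cannot be absorbed uniformly in $k$, and one cannot recover the exponent $\beta$ of Theorem~\ref{ThmTCoercive}. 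Second, the mechanism the paper actually uses to exploit compact support is more elementary than Paley--Wiener: from $\diam\Gamma=L$ one gets a pointwise Lipschitz bound $|\hat\phi(\bxi_1)-\hat\phi(\bxi_2)|\le \tilde c_n|\bxi_1-\bxi_2|\,\|\hat\phi\|_{L^2}$ by direct Cauchy--Schwarz on $\int_\Gamma(\re^{-\ri(\bxi_1-\bxi_2)\cdot\bx}-1)\re^{-\ri\bxi_2\cdot\bx}\phi$, then $|\hat\phi(\bxi)|^2$ on the thin annulus is compared to $|\hat\phi(\bxi\pm\eps\hat\bxi)|^2$ by a radial shift of $\eps$, which pushes the integral onto the good regions (controlled by $I_1$ and $I_4$) at the price of a Jacobian factor and a remainder $\sim\eps^3 k^{n-2}J$; the choice of $\eps$ makes this remainder absorbable. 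You gesture at ``an interpolation-type inequality'' but the concrete shift-and-absorb argument, with its $k$-dependent annulus width and the explicit tracking of the constants $C_1,C_2$, is the heart of the proof and is missing from your proposal.

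Your alternative via the Maue identity should be dropped: on the flat screen, Maue in Fourier variables is the algebraic identity $\ri Z(\bxi)/2 = -\tfrac{\ri}{2Z(\bxi)}(|\bxi|^2-k^2)$, so $\langle T_k\psi,\psi\rangle = -\bigl(\langle S_k\nabla_\Gamma\psi,\nabla_\Gamma\psi\rangle - k^2\langle S_k\psi,\psi\rangle\bigr)$, and expanding the right-hand side simply reproduces $\tfrac12\int \ri Z(\bxi)|\hat\psi|^2\,\rd\bxi$ term by term. The minus sign means you cannot add the two $S_k$-coercivity lower bounds, and in fact the integrand you are left with vanishes at $|\bxi|=k$ exactly as before: the $S_k$ symbol $1/Z$ is large near the light cone while the $T_k$ symbol $Z$ is small there, so no amount of Friedrichs control $k\|\psi\|_{L^2}\le\|\psi\|_{\tilde H^{1/2}_k}$ bridges that gap. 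This route is circular and would need the same annulus argument anyway.
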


Thus, by the Lax-Milgram Lemma, the operator $S_k:\tilde{H}^{-1/2}(\Gamma) \to H^{1/2}(\Gamma)$ is invertible. Hence if $\tilde{H}^{-1/2}(\Gamma) = H^{-1/2}_{\overline\Gamma}$, then problem $\sS$ (and hence also problem $\sD'$) is uniquely solvable. This holds, for example, if $\Gamma$ is $C^0$, by Lemma \ref{lem:sob_equiv}. But if $ \tilde{H}^{-1/2}(\Gamma)\subsetneqq H^{-1/2}_{\overline\Gamma}$ then $S_k:H^{-1/2}_{\overline\Gamma} \to H^{1/2}(\Gamma)$ is surjective but not injective, i.e., a solution to problem $\sS$ (and hence also problem $\sD'$) exists, but this solution is not unique.

Similarly, the operator $T_k:\tilde{H}^{1/2}(\Gamma) \to H^{-1/2}(\Gamma)$ is invertible. Hence if $\tilde{H}^{1/2}(\Gamma) = H^{1/2}_{\overline\Gamma}$, then problem $\sT$ (and hence also problem $\sN'$) is uniquely solvable. This holds, for example, if $\Gamma$ is $C^0$, by Lemma \ref{lem:sob_equiv}. But if $ \tilde{H}^{1/2}(\Gamma)\subsetneqq H^{1/2}_{\overline\Gamma}$ then $T_k:H^{1/2}_{\overline\Gamma} \to H^{-1/2}(\Gamma)$ is surjective but not injective, i.e., a solution to problem $\sT$ (and hence also problem $\sN'$) exists, but this solution is not unique.

In order to guarantee unique solvability for arbitrary non-empty open $\Gamma$ we must modify the BVPs $\sD'$ and $\sN'$ further, to require that the solutions of the integral equations lie in the spaces $\tilde{H}^{-1/2}(\Gamma)$ and $\tilde{H}^{1/2}(\Gamma)$ respectively.

\begin{defn}[Problem $\sD''$]
Given $g_{\sD}\in H^{1/2}(\Gamma)$, find $u\in C^2\left(D\right)\cap  W^1_{\mathrm{loc}}(D)$ such that
\begin{align}
  \Delta u+k^2u  &=  0, \quad \;\; \mbox{in }D, \label{eqn:HE1restate2} \\
 \gamma^\pm (\chi u)\vert_\Gamma &=g_{\sD}, \quad \mbox{for any }\chi\in\scrD_{1,\Gamma}(\R^n),\label{eqn:bc1restate2} \\
 [u] & = 0, \\
 [\pdonetext{u}{\bn}] &\in \tilde H^{-1/2}(\Gamma), 
\end{align}
and $u$ satisfies the Sommerfeld radiation condition. 
\end{defn}
\begin{defn}[Problem $\sN''$]
Given $g_{\sN}\in H^{-1/2}(\Gamma)$, find $u\in C^2\left(D\right)\cap W^1_{\mathrm{loc}}(D)$ such that
\begin{align}
  \Delta u+k^2u  &=  0, \quad\;\; \mbox{in }D, & \label{eqn:HE2restate2} \\
\partial_\bn^\pm (\chi u)\vert_\Gamma &=g_{\sN}, \quad \mbox{for any }\chi\in\scrD_{1,\Gamma}(\R^n),& \label{eqn:bc2restate2} \\
 [\pdonetext{u}{\bn}] & = 0, \\
 [u] &\in \tilde H^{1/2}(\Gamma),
\end{align}
and $u$ satisfies the Sommerfeld radiation condition. 
\end{defn}

We then have the following well-posedness results, which hold for an arbitrary non-empty open $\Gamma$.

\begin{thm}
\label{DirExUn}
For any $g_{\sD}\in H^{1/2}(\Gamma)$ problem $\sD''$ has a unique solution given by formula \rf{eqn:SLPRep}, where $[\pdonetext{u}{\bn}]$ is the unique solution in $\tilde H^{-1/2}(\Gamma)$ of equation \rf{BIE_sl}.
\end{thm}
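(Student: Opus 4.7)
The plan is to reduce problem $\sD''$ to the boundary integral equation $\sS$, but posed in the restricted space $\tilde H^{-1/2}(\Gamma)$ rather than $H^{-1/2}_{\overline{\Gamma}}$, and then invoke the coercivity of $S_k$ on $\tilde H^{-1/2}(\Gamma)$ together with the Lax--Milgram lemma.

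First I would prove existence. By Theorem \ref{ThmSk}, the operator $-S_k: \tilde H^{-1/2}(\Gamma) \to (\tilde H^{-1/2}(\Gamma))^*$ is bounded and coercive (identifying $H^{1/2}(\Gamma) \cong (\tilde H^{-1/2}(\Gamma))^*$ via Theorem \ref{DualityTheorem}). The Lax--Milgram lemma then provides a unique $\phi \in \tilde H^{-1/2}(\Gamma)$ satisfying $-S_k \phi = g_{\sD}$ in $H^{1/2}(\Gamma)$. Define $u := -\cS_k \phi$. Since $\phi \in \tilde H^{-1/2}(\Gamma) \subset H^{-1/2}_{\overline{\Gamma}}$, Theorem \ref{LayerPotRegThm} shows that $u \in C^2(D) \cap W^1_{\mathrm{loc}}(D)$, satisfies the Helmholtz equation in $D$ and the Sommerfeld radiation condition, and the jump relations \eqref{JumpRelns1}--\eqref{JumpRelns2} yield $[u]=0$ and $[\pdonetext{u}{\bn}] = \phi \in \tilde H^{-1/2}(\Gamma)$. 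The boundary condition \eqref{eqn:bc1restate2} then holds because, for any $\chi\in \scrD_{1,\Gamma}(\R^n)$, $\gamma^\pm(\chi u)|_\Gamma = -S_k\phi = g_{\sD}$ by the definition of $S_k$ and the BIE. Hence $u$ solves $\sD''$.

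For uniqueness, suppose $u$ is any solution of $\sD''$. Then $u$ is in particular a solution of $\sD'$ (since $\sD''$ strengthens $\sD'$ by the requirement $[\pdonetext{u}{\bn}] \in \tilde H^{-1/2}(\Gamma)$), so by Theorem \ref{DirEquivThm} the representation formula \eqref{eqn:SLPRep} holds and $\phi := [\pdonetext{u}{\bn}]$ satisfies problem $\sS$, i.e.\ $-S_k \phi = g_{\sD}$. Crucially, by the definition of $\sD''$ this $\phi$ lies in $\tilde H^{-1/2}(\Gamma)$, the space on which $S_k$ is invertible. Thus $\phi$ is uniquely determined, and then $u = -\cS_k \phi$ is too. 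This completes the proof.

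The main obstacle, which is essentially already resolved by the machinery set up earlier, is the potential nonuniqueness arising from the possibility that $\tilde H^{-1/2}(\Gamma) \subsetneqq H^{-1/2}_{\overline{\Gamma}}$ for non-Lipschitz $\Gamma$; the definition of $\sD''$ rules out precisely the ambiguous Neumann-jump contributions supported on $\partial\Gamma$, leaving an equation in the space where coercivity gives invertibility. Beyond this, the work is bookkeeping: checking the equivalence between the BVP and the BIE via Theorem \ref{DirEquivThm}, and applying Lax--Milgram in the standard way.
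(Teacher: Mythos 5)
Your proof is correct and takes precisely the route the paper intends: the paper states Theorem \ref{DirExUn} without proof, as a consequence of the equivalence Theorem \ref{DirEquivThm} between $\sD'$ and the BIE problem $\sS$, combined with the invertibility of $S_k:\tilde H^{-1/2}(\Gamma)\to H^{1/2}(\Gamma)$ that follows from coercivity (Theorem \ref{ThmCoerc}, via Lax--Milgram and the duality identification of Theorem \ref{DualityTheorem}). Your argument fills in exactly these details, with the sign in the jump relation $[\partial(\cS_k\phi)/\partial\bn]=-\phi$ handled correctly and the role of the extra constraint $[\partial u/\partial\bn]\in\tilde H^{-1/2}(\Gamma)$ in forcing uniqueness clearly identified.
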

\begin{thm}
\label{NeuExUn}
For any $g_{\sN}\in H^{-1/2}(\Gamma)$ problem $\sN''$ has a unique solution given by formula \rf{eqn:DLPRep}, where $[u]$ is the unique solution in $\tilde H^{1/2}(\Gamma)$ of equation \rf{BIE_hyp}.
\end{thm}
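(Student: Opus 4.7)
The plan is to reduce problem $\sN''$ to the boundary integral equation \eqref{BIE_hyp} posed on the energy space $\tilde H^{1/2}(\Gamma)$ via the equivalence in Theorem \ref{NeuEquivThm}, and then invoke the coercivity of $T_k$ on $\tilde H^{1/2}(\Gamma)$ asserted in Theorem \ref{ThmTk} together with the Lax--Milgram lemma. The solution of the BVP is then recovered via the double-layer potential formula \eqref{eqn:DLPRep}. The substantive analytical content has already been packaged in earlier results; the main obstacle was proving coercivity of $T_k$ on $\tilde H^{1/2}(\Gamma)$ (Theorem \ref{ThmTk}). Once coercivity is in hand, $\sN''$ is well-posed precisely because the additional constraint $[u]\in\tilde H^{1/2}(\Gamma)$ restricts the ambient space $H^{1/2}_{\overline\Gamma}$ to a closed subspace on which $T_k$ is an isomorphism.

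\emph{Uniqueness.} Suppose $u$ solves $\sN''$. Then $u$ a fortiori solves $\sN'$, so Theorem \ref{NeuEquivThm} yields the representation \eqref{eqn:DLPRep} together with $[u]\in H^{1/2}_{\overline\Gamma}$ and $T_k[u]=g_{\sN}$. The additional constraint in $\sN''$ places $[u]\in\tilde H^{1/2}(\Gamma)$. By Theorem \ref{ThmTk} the operator $T_k:\tilde H^{1/2}(\Gamma)\to (\tilde H^{1/2}(\Gamma))^*\cong H^{-1/2}(\Gamma)$ is bounded and coercive, so by the Lax--Milgram lemma it is injective on $\tilde H^{1/2}(\Gamma)$. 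Hence $[u]$ is determined uniquely in $\tilde H^{1/2}(\Gamma)$ as the solution of \eqref{BIE_hyp}, and then $u$ itself is determined uniquely by the representation formula \eqref{eqn:DLPRep}.

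\emph{Existence.} Let $\psi\in\tilde H^{1/2}(\Gamma)$ be the unique solution, guaranteed by Lax--Milgram in view of Theorem \ref{ThmTk}, of $T_k\psi=g_{\sN}$, and define $u:=\cD_k\psi$. Theorem \ref{LayerPotRegThm}(i) gives $u\in C^2(D)\cap W^1_{\mathrm{loc}}(D)$, along with the Helmholtz equation in $D$ and the Sommerfeld radiation condition. The jump relations \eqref{JumpRelns3}--\eqref{JumpRelns4} yield $[u]=\psi\in\tilde H^{1/2}(\Gamma)$ and $[\partial u/\partial \bn]=0$. Since the normal jump vanishes, both one-sided normal traces coincide, and their common value on $\Gamma$ is $T_k\psi=g_{\sN}$, verifying the Neumann boundary condition \eqref{eqn:bc2restate2}. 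Thus $u$ satisfies all the requirements of $\sN''$, and by construction it is given by \eqref{eqn:DLPRep} with $[u]=\psi$ being the unique solution in $\tilde H^{1/2}(\Gamma)$ of \eqref{BIE_hyp}.
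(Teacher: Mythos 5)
Your proof is correct and follows the same route that the paper leaves implicit: reduce $\sN''$ to the BIE \eqref{BIE_hyp} via Theorem \ref{NeuEquivThm}, invoke the coercivity of $T_k$ on $\tilde H^{1/2}(\Gamma)$ (Theorem \ref{ThmTk}) and the Lax--Milgram lemma to get a unique $\psi\in\tilde H^{1/2}(\Gamma)$ solving $T_k\psi=g_{\sN}$, and recover $u$ as $\cD_k\psi$. The paper states Theorem \ref{NeuExUn} without a separate proof because it is an immediate consequence of the preceding discussion; you have simply spelled out that argument explicitly, verifying existence directly from the jump relations rather than quoting the converse direction of Theorem \ref{NeuEquivThm}, which is a harmless and equivalent variant.
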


We remark that if $\tilde{H}^{-1/2}(\Gamma) = H^{-1/2}_{\overline\Gamma}$ (e.g.\ if $\Gamma$ is $C^0$) then problems $\sD''$ and $\sD'$ are equivalent; if $\partial\Gamma$ is $1/2$-null (e.g.\ if $\Gamma$ is $C^0$) then problems $\sD'$ and $\sD$ are equivalent. So, in particular, if $\Gamma$ is $C^0$ all three problems are equivalent, and the original BVP $\sD$ is uniquely solvable.

Similarly, if $\tilde{H}^{1/2}(\Gamma) = H^{1/2}_{\overline\Gamma}$ (e.g.\ if $\Gamma$ is $C^0$) then problems $\sN''$ and $\sN'$ are equivalent; if $\partial\Gamma$ is $(-1/2)$-null (e.g.\ if $\Gamma$ is Lipschitz) then problems $\sN'$ and $\sN$ are equivalent. So, in particular, if $\Gamma$ is Lipschitz all three problems are equivalent, and the original BVP $\sN$ is uniquely solvable.

\section{Fourier representations for layer potentials and BIOs}
\label{sec:FourierRep}
Our approach to proving the continuity and coercivity results in Theorems \ref{ThmSk} and \ref{ThmTk}, and furthermore in determining the dependence of the associated continuity and coercivity constants on $k$ and $\Gamma$ (which we do in \S\ref{sec:SkAnalysis} and \S\ref{sec:TkAnalysis}), is to make use of the fact that, because the screen is flat, the single and double layer potentials can be expressed in terms of Fourier transforms (at least for sufficiently smooth densities). As a result, the single-layer and hypersingular BIOs can be thought of as pseudodifferential operators, as the following theorem shows. 
We note that the parts of Theorem \ref{ThmFourierRep} relating to $T_k$ were stated and proved (via a slightly different method to that presented here) for the case $n=3$ and $\partial\Gamma$ smooth in \cite[Theorems 1 and 2]{Ha-Du:92}. %
\begin{thm}
\label{ThmFourierRep}
Let $\phi\in \scrD (\Gamma)$. Then
\begin{align}
\label{SPotFourierRep}
\cS_k\phi(\bx)&=\frac{\ri}{2(2\pi)^{(n-1)/2}}\int_{\R^{n-1}}\frac{\re^{\ri (\bxi\cdot \tbx+|x_n| Z(\bxi))}}{Z(\bxi)}\widehat{\varphi}(\bxi)\,\rd \bxi,  & &\bx=(\tbx,x_n)\in D,\\
\label{DPotFourierRep}
\cD_k\phi(\bx)&=\frac{\sign{x_n}}{2(2\pi)^{(n-1)/2}}\int_{\R^{n-1}}\re^{\ri (\bxi\cdot \tbx+|x_n| Z(\bxi))}\widehat{\varphi}(\bxi)\,\rd \bxi, & & \bx=(\tbx,x_n)\in D,
\end{align}
where $\hat{}$ represents the Fourier transform with respect to $\tbx\in \R^{n-1}$ and
\begin{align}
\label{ZDef}
Z(\bxi):=
\begin{cases}
\sqrt{k^2-|\bxi|^2}, & |\bxi|\leq k\\
\ri\sqrt{|\bxi|^2-k^2}, & |\bxi|> k,
\end{cases}
\qquad \bxi\in \R^{n-1}.
\end{align}
The operators $S_k,T_k:\scrD (\Gamma) \to \scrD (\overline\Gamma)$ satisfy $S_k\phi = (S_k^\infty \phi)|_\Gamma$ and $T_k\phi = (T_k^\infty \phi)|_\Gamma$, where $S_k^\infty, T_k^\infty:\scrD(\R^{n-1})\to C^\infty(\R^{n-1})$ are the pseudodifferential operators defined for $\varphi\in \scrD(\R^{n-1})$ by
\begin{align}
\label{SFourierRep}
S_k^\infty\varphi(\tbx)&=\frac{\ri}{2(2\pi)^{(n-1)/2}}\int_{\R^{n-1}}\frac{\re^{\ri \bxi\cdot \tbx}}{Z(\bxi)}\widehat{\varphi}(\bxi)\,\rd \bxi, & & \tbx\in\R^{n-1},\\
\label{TFourierRep}
T_k^\infty\varphi(\tbx)&=\frac{\ri}{2(2\pi)^{(n-1)/2}}\int_{\R^{n-1}}Z(\bxi)\re^{\ri \bxi\cdot \tbx}\widehat{\varphi}(\bxi)\,\rd \bxi, & & \tbx\in\R^{n-1}.%
\end{align}
Furthermore, for $\phi, \psi\in \scrD(\Gamma)$ we have that
\begin{align}
\label{SIPFourierRep}
(S_k\phi,\psi)_{L^2(\Gamma)} &= \frac{\ri}{2 }\int_{\R^{n-1}} \frac{1}{Z(\bxi)} \widehat{\phi}(\bxi) \overline{\widehat{\psi}(\bxi)} \,\rd \bxi,\\
\label{TIPFourierRep}
(T_k\phi,\psi)_{L^2(\Gamma)} &= \frac{\ri}{2 }\int_{\R^{n-1}} Z(\bxi) \widehat{\phi}(\bxi) \overline{\widehat{\psi}(\bxi)} \,\rd \bxi.
\end{align}
\end{thm}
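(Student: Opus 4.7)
The plan is to prove all four representations \eqref{SPotFourierRep}--\eqref{TFourierRep} together with \eqref{SIPFourierRep}--\eqref{TIPFourierRep} by reducing everything to a single computation: the tangential Fourier transform of the free-space fundamental solution. The restriction $\phi\in\scrD(\Gamma)$ (so $\hat\phi$ is Schwartz) controls all integrability issues, in particular the integrable but non-$L^\infty$ singularity of $1/Z(\bxi)$ on the codimension-one sphere $\{|\bxi|=k\}\subset\R^{n-1}$.

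The key computational step is to show that, writing $K(\tbx,x_n):=\Phi((\tbx,x_n),0)$, the tangential Fourier transform of $K$ (in the $\tbx$ variable) is
\begin{equation*}
\hat K(\bxi,x_n) \;=\; \frac{\ri}{2(2\pi)^{(n-1)/2}\,Z(\bxi)}\,\re^{\ri|x_n|Z(\bxi)}, \qquad \bxi\in\R^{n-1},\ x_n\neq 0.
\end{equation*}
I would establish this by observing that $K$, as a tempered distribution, is the outgoing fundamental solution of Helmholtz on $\R^n$, so it satisfies $(\Delta_{\tbx}+\partial_{x_n}^2+k^2)K=-\delta_0$; taking the tangential Fourier transform reduces this to the one-dimensional ODE
\begin{equation*}
\partial_{x_n}^2\hat K(\bxi,\cdot) + Z(\bxi)^2\,\hat K(\bxi,\cdot) \;=\; -(2\pi)^{-(n-1)/2}\delta_0,
\end{equation*}
whose unique solution selected by the Sommerfeld radiation condition (outgoing as $|x_n|\to\infty$ for $|\bxi|<k$ and decaying for $|\bxi|>k$, consistent with the branch choice in \eqref{ZDef}) is precisely the claimed expression, the overall constant being pinned down by the unit jump of $\partial_{x_n}\hat K$ at $x_n=0$. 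Since \eqref{SkPotIntRep} expresses $\cS_k\phi(\tbx,x_n)=(K(\cdot,x_n)\ast_{\tbx}\phi)(\tbx)$ (with $\phi$ extended by zero to $\R^{n-1}$), the convolution theorem under the symmetric convention \eqref{FTDef}, followed by Fourier inversion, yields \eqref{SPotFourierRep}. Because $\Phi(\bx,\by)$ depends only on $\bx-\by$ and $y_n=0$ on $\Gamma$, the kernel in \eqref{DkPotIntRep} equals $-\partial_{x_n}\Phi(\bx,\by)$; hence $\cD_k\phi=-\partial_{x_n}\cS_k\phi$, and differentiating \eqref{SPotFourierRep} in $x_n$ (producing the factor $\sign(x_n)$) gives \eqref{DPotFourierRep}.

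The pseudodifferential formulas \eqref{SFourierRep}--\eqref{TFourierRep} follow by passing to the trace $x_n\to 0^\pm$ and to the normal-derivative trace $\partial_{x_n}|_{x_n\to 0^\pm}$ inside the integrals; dominated convergence is applicable for $\phi\in\scrD(\Gamma)\subset\scrS(\R^{n-1})$ using the Schwartz decay of $\hat\phi$, the local integrability of $1/Z$ on $\R^{n-1}$, and the linear growth of $Z$. The restriction identities $S_k\phi=(S_k^\infty\phi)|_\Gamma$ and $T_k\phi=(T_k^\infty\phi)|_\Gamma$ then follow from the definitions of $S_k$ and $T_k$ given earlier together with the consistency of the $\pm$ traces, which is guaranteed by the jump relations \eqref{JumpRelns1} and \eqref{JumpRelns4}. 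Finally, the $L^2(\Gamma)$ inner product identities \eqref{SIPFourierRep}--\eqref{TIPFourierRep} are immediate from Plancherel's theorem on $\R^{n-1}$: \eqref{SFourierRep} and \eqref{TFourierRep} identify the Fourier transforms of $S_k^\infty\phi$ and $T_k^\infty\phi$ as $(\ri/2)\hat\phi/Z$ and $(\ri/2)Z\hat\phi$ respectively, and pairing with $\psi\in\scrD(\Gamma)$ (extended by zero) via Plancherel gives the claim.

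I expect the main obstacle to be the ODE-plus-SRC argument that pins down $\hat K$: one must interpret the Sommerfeld radiation condition after tangential Fourier transform, match the $\delta$-source, and verify that the resulting tempered distribution really coincides with the classical fundamental solution \eqref{FundSoln}. A safer alternative I would keep in reserve is to verify directly that the right-hand side of \eqref{SPotFourierRep} defines a function on $D$ that is $C^2$, solves the Helmholtz equation, satisfies the SRC, has zero Dirichlet jump, and has Neumann jump equal to $-\phi$, and then to invoke uniqueness of the smooth transmission problem on the hyperplane $\Gamma_\infty$ (applicable since $\supp\phi$ lies inside a Lipschitz relatively open subset of $\Gamma_\infty$) to identify this function with $\cS_k\phi$.
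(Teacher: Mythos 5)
Your proposal is correct, but it takes a genuinely different route at the key computational step. The paper starts from the same observation that $\cS_k\phi(\bx)=(\Phi_c(\cdot,x_n)\ast\phi)(\tbx)$, but then evaluates the tangential Fourier transform $\widehat{\Phi_c}(\bxi,x_n)$ directly, using the radial Fourier-transform formula \eqref{FTRadial} together with explicit Hankel/Bessel integral identities (Gradshteyn--Ryzhik (6.677), (6.737) and DLMF (10.16.1), (10.39.2)), applied separately to the two cases $n=3$ and $n=2$. You instead propose to take the tangential Fourier transform of the distributional equation $(\Delta+k^2)\Phi(\cdot,0)=-\delta_0$ to obtain a one-dimensional ODE in $x_n$ with $\delta$-source, and then to select the solution via the radiation condition and the unit jump of $\partial_{x_n}\hat K$ at $x_n=0$. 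This is a legitimate and arguably more unified derivation: it treats $n=2,3$ (and any $n$) at once and avoids invoking special-function tables. What the paper's route buys is that it is a direct calculation which sidesteps the somewhat delicate issue you rightly flag, namely that the Sommerfeld radiation condition in $\R^n$ does not obviously transform into a condition on $\hat K(\bxi,\cdot)$ that is uniform in $\bxi$, and that the ODE becomes resonant exactly on the measure-zero set $|\bxi|=k$ where $Z=0$. Your fallback strategy (verify the right-hand side of \eqref{SPotFourierRep} satisfies the Helmholtz equation, SRC, zero Dirichlet jump and Neumann jump $-\phi$, then appeal to uniqueness of the transmission problem on the hyperplane) is in fact close in spirit to the argument used by Ha-Duong in the reference \cite{Ha-Du:92} cited by the paper. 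The remainder of your argument (obtaining $\cD_k\phi=-\partial_{x_n}\cS_k\phi$, passing to traces using the jump relations \eqref{JumpRelns1}, \eqref{JumpRelns4}, and deriving the $L^2(\Gamma)$ pairings \eqref{SIPFourierRep}, \eqref{TIPFourierRep} via Plancherel) follows the paper's proof essentially step for step.
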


\begin{proof}
From the integral representation \rf{SkPotIntRep}, we see that $\cS_k\phi$ can be expressed as
\begin{align}
\label{}
\cS_k\phi(\bx) = (\Phi_c(\cdot,x_n)\ast \phi)(\tbx),
\end{align}
where $\ast$ indicates a convolution over $\R^{n-1}$ (with $x_n$ treated as a parameter) and
\begin{align*}
\label{}
\Phi_c(\tbx,x_n) :=\Phi((\tbx ,x_n),\bs{0}) =
\begin{cases}
\dfrac{\re^{\ri k\sqrt{r^2+x_n^2}}}{4\pi \sqrt{r^2+x_n^2}}, & n=3,\\[3mm]
\dfrac{\ri }{4}H_0^{(1)}(k\sqrt{r^2+x_n^2}), & n=2,
\end{cases}
\qquad  r=|\tbx |,\, \tbx \in\R^{n-1}.
\end{align*}
Hence the Fourier transform (with respect to $\tbx\in\R^{n-1}$) of $\cS_k \phi$ is given by the product
\begin{align*}
\widehat{\cS_k \phi}(\bxi,x_n) = (2\pi)^{(n-1)/2}\,\widehat{\Phi_c}(\bxi,x_n)\hat{\varphi}(\bxi).
\end{align*}
To evaluate $\widehat{\Phi_c}$ we note that for a function $f(\bx )=F(r)$, where $r=|\bx |$ for $\bx \in\R^{d}$, $d=1,2$, the Fourier transform of $f$ is given by (cf.\ \cite[\S{B.5}]{Grafakos})
\footnote{Strictly speaking, \cite[\S{B.5}]{Grafakos} only provides \rf{FTRadial} for the case where $f\in L^1(\R^d)$, while we are in the case where $f\in L^1_{\rm loc}(\R^d)$ with some decay at infinity, but not enough to be in $L^1(\R^d)$. 
But for the functions $f=\Phi_c(\cdot,x_n)$ one can check using the Dominated Convergence Theorem for Lebesgue integrals that the formulas \rf{FTRadial} do indeed coincide with the distributional Fourier transforms, but we do not supply further details here. 
For the case $d=2$ one should also note the related integral representation for the Bessel function in \cite[(10.9.2)]{DLMF}.}
\begin{align}
\label{FTRadial}
\hat{f}(\bxi)=
\begin{cases}
\displaystyle{\int_0^\infty F(r)J_0(|\bxi|r)r\,\rd r}, & d=2,\\[3mm]
\displaystyle{
\sqrt{\frac{2}{\pi}}\int_0^\infty F(r)\cos(\bxi r)\,\rd r}, & d=1.
\end{cases}
\end{align}
This result, combined with the identies \cite[(6.677), (6.737)]{Ryzhik}
and \cite[(10.16.1), (10.39.2)]{DLMF},
gives (see also~\cite[eqn (4.17)]{ChHePo06} for the case $n=3$)
\begin{align*}
\label{}
\widehat{\Phi_c}(\bxi,x_n)= \dfrac{\ri \,\re^{\ri |x_n| Z(\bxi)}}{2(2\pi)^{(n-1)/2}Z(\bxi)},
\end{align*}
where $Z(\bxi)$ is defined as in \rf{ZDef}. The representation \rf{SFourierRep} is then obtained by Fourier inversion.

The representation \rf{DPotFourierRep} for $\cD_k\phi$ can be then obtained from \rf{SPotFourierRep} by noting that
\begin{align*}
\label{}
\pdone{\Phi(\bx,\by)}{\bn(y)}=\pdone{\Phi(\bx,\by)}{y_n} = -\pdone{\Phi(\bx,\by)}{x_n}, \qquad \bx\in D, \,\by\in \Gamma,
\end{align*}
and the representations for $S_k$ and $T_k$ follow from taking the appropriate traces of \rf{SPotFourierRep} and \rf{DPotFourierRep}.%

Finally, \rf{SIPFourierRep} and \rf{TIPFourierRep} follow from viewing $S^\infty_k\phi$ and $T^\infty_k\phi$ as elements of $C^\infty(\R^{n-1})\cap \mathscr{S}^*(\R^{n-1})$ and recalling the definition \rf{FTDistDef} of the Fourier transform of a distribution, e.g.%
\begin{align*}
\!\!\!\!
(S_k\phi,\psi)_{L^2(\Gamma)}
=  \int_{\R^{n-1}} S_k^\infty\phi(\tbx)\overline{\psi(\tbx)}\, d\tbx
= \int_{\R^{n-1}} \widehat{S_k^\infty\phi}(\bxi)\overline{\widehat{\psi}(\bxi)}\, d\bxi
= \frac{\ri}{2 }\int_{\R^{n-1}} \frac{1}{Z(\bxi)} \widehat{\phi}(\bxi) \overline{\widehat{\psi}(\bxi)} \,\rd \bxi.
\end{align*}
\end{proof}
\section{$k$-explicit analysis of $S_k$}
\label{sec:SkAnalysis}
Our $k$-explicit analysis of the single-layer operator $S_k$ makes use of the following lemma.
\begin{lem}
\label{LemPhiFT}
Given $L>0$ let
\begin{align}
\label{PhiADef}
\Phi_L(\tbx ,x_n) : =
\begin{cases}
\Phi_c(\tbx,x_n), & |\tbx |\leq L,\\
0, & |\tbx |>L.
\end{cases}
\end{align}
Then there exists a constant $C>0$, independent of $k$, $L$, $\bxi$ and $x_n$, such that, for all $k>0$, $\bxi\in\R^{n-1}$, and $x_n\in \R$,
\begin{align}
\label{EquivBounded}
|\widehat{ \Phi_L}(\bxi,x_n)|\sqrt{k^2+|\bxi|^2} \leq
\begin{cases}
C(1+ (kL)^{1/2}), & n=3,\\
C \log{(2+(kL)^{-1})}(1+ (kL)^{1/2} + (k|x_2|)^{1/2}\log{(2+kL)}), & n=2.\\
\end{cases}
\end{align}

\end{lem}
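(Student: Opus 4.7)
The plan is to work directly with the radial integral representations of $\widehat{\Phi_L}$ obtained from \eqref{FTRadial}. For $n=3$ the substitution $u=\sqrt{r^2+x_n^2}$ yields the convenient form
\[
\widehat{\Phi_L}(\bxi,x_n) = \frac{1}{4\pi}\int_{|x_n|}^{\sqrt{L^2+x_n^2}} \re^{\ri k u}\, J_0\bigl(|\bxi|\sqrt{u^2-x_n^2}\bigr)\,\rd u,
\]
while for $n=2$ one obtains an analogous cosine integral against $\tfrac{\ri}{4}H_0^{(1)}(k\sqrt{r^2+x_2^2})$. The task then reduces to estimating these one-dimensional oscillatory integrals.

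I would split into three regimes for $|\bxi|$: low frequency ($|\bxi|\leq k/2$), high frequency ($|\bxi|\geq 2k$), and a transition region ($k/2\leq |\bxi|\leq 2k$). In the low- and high-frequency regimes $|Z(\bxi)|\geq c\sqrt{k^2+|\bxi|^2}$, so the exact formula $\widehat{\Phi_c}(\bxi,x_n)=\ri\,\re^{\ri|x_n|Z(\bxi)}/(2(2\pi)^{(n-1)/2}Z(\bxi))$ derived in the proof of Theorem \ref{ThmFourierRep} immediately gives $|\widehat{\Phi_c}|\sqrt{k^2+|\bxi|^2}\lesssim 1$. The correction $\widehat{\Phi_c}-\widehat{\Phi_L}$, which is (in the distributional sense) the Fourier transform of $\Phi_c$ outside the ball of radius $L$, can be controlled by integration by parts exploiting the non-stationarity of the phase $ku\pm|\bxi|\sqrt{u^2-x_n^2}$ when $|\bxi|$ is bounded away from $k$; this produces a bound uniform in $L$.

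The main obstacle is the transition regime $|\bxi|\sim k$, where $\widehat{\Phi_c}$ itself has a square-root singularity at $|\bxi|=k$ and the truncation at $L$ is what keeps $\widehat{\Phi_L}$ finite. The crude $L^1$ estimate $|\widehat{\Phi_L}|\leq L/(4\pi)$ gives $|\widehat{\Phi_L}|\sqrt{k^2+|\bxi|^2}\lesssim kL$, too weak by a factor $(kL)^{1/2}$. To recover the correct power I would insert the large-argument asymptotic $J_0(y)\sim \sqrt{2/(\pi y)}\cos(y-\pi/4)$ in the region $y=|\bxi|\sqrt{u^2-x_n^2}\gtrsim 1$, handling the small-$y$ region (an interval of $u$ of length $O(1/k)$) by $|J_0|\leq 1$. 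The resulting integrals have amplitude $O((ku)^{-1/2})$ and linear-plus-square-root phases $ku\pm|\bxi|\sqrt{u^2-x_n^2}$; applying van der Corput bounds (and direct integration by parts where the phase derivative is bounded below) yields an integral of size $O(\sqrt{L/k})$, which after multiplication by $\sqrt{k^2+|\bxi|^2}\sim k$ produces exactly the $(kL)^{1/2}$ factor in \eqref{EquivBounded}. The delicate point is that the stationary point of the ``plus'' phase lies beyond the integration range whereas the ``minus'' phase can be genuinely stationary inside, so the endpoint contributions and the stationary-phase contribution must be balanced carefully to get the claimed uniform bound in $x_n$.

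The $n=2$ case follows the same three-regime template, but the weaker $r^{-1/2}$ decay of $H_0^{(1)}(k\sqrt{r^2+x_2^2})$ (with no extra factor of $r$ from the 1D measure) and its logarithmic singularity at the origin complicate the asymptotic analysis. The small-argument expansion $H_0^{(1)}(z)\sim(2\ri/\pi)\log z$ as $z\to 0$ makes the short-range contribution (from $r$ up to $\min(1/k,L)$) produce the $\log(2+(kL)^{-1})$ factor, while substituting the large-argument asymptotic for $H_0^{(1)}$ in the remaining range and carrying out the corresponding stationary-phase/van der Corput analysis generates the $(k|x_2|)^{1/2}$ term, with the additional $\log(2+kL)$ arising from an endpoint/logarithmic correction when the stationary point approaches $r=0$ as $|\bxi|\uparrow k$. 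Working out these logarithmic refinements uniformly in $x_2$ is the most technical part of the proof.
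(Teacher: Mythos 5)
Your plan is a genuinely different route from the paper's. The paper rescales to $L=1$, works directly with the radial integrals from \eqref{FTRadial}, and splits at $r = R$ with $R=\min\{1,1/|\bxi|\}$ ($n=3$) or $R=\min\{1,1/k\}$ ($n=2$); it then does a single integration by parts using the Bessel recurrences $J_0'=-J_1$ and $(zJ_1)'=zJ_0$ and the standard decay $|J_m(z)|,|H_m^{(1)}(z)|\lesssim z^{-1/2}$ for $z>1$. There is no stationary-phase or van der Corput input anywhere, and no appeal to the closed-form $\widehat{\Phi_c}=\ri\,\re^{\ri|x_n|Z}/(2(2\pi)^{(n-1)/2}Z)$: the truncated kernel is estimated directly throughout. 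Your plan, by contrast, uses the exact Fourier transform plus a tail correction off the light cone, and large-argument Bessel asymptotics plus oscillatory-integral estimates near $|\bxi|=k$. Both can yield the result, but the paper's version is shorter and uniform-in-$|\bxi|$ by construction; your low-frequency regime claim $\lesssim 1$ is actually \emph{stronger} than the paper establishes there (the paper is content with $C(1+(kL)^{1/2})$ uniformly over $|\bxi|\le k$), so you are committing to extra work the lemma does not require.

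Two concrete soft spots worth flagging. First, in the low/high regimes you assert the tail correction $\widehat{\Phi_c}-\widehat{\Phi_L}$ is controlled ``by integration by parts exploiting the non-stationarity of the phase.'' But for the $(-)$ branch of the phase $ku-|\bxi|\sqrt{u^2-x_n^2}$, when $|\bxi|<k$ and $|x_n|\gtrsim L$, the stationary point $u_*=k|x_n|/\sqrt{k^2-|\bxi|^2}$ lies \emph{inside} the tail $u>\sqrt{L^2+x_n^2}$, so that claim as stated is false; a genuine stationary-phase estimate is needed there (it does give the right order, but the argument as sketched doesn't cover it). Second, near $|\bxi|\sim k$ the van der Corput machinery is not actually needed: once you substitute the large-argument asymptotic $J_0(y)\sim\sqrt{2/(\pi y)}\cos(y-\pi/4)$, the $L^1$ estimate alone (amplitude $(|\bxi|\sqrt{u^2-x_n^2})^{-1/2}\sim(k v)^{-1/2}$ integrated over the truncated range) already produces the $O(\sqrt{L/k})$ bound, without any stationary-phase gain; the paper gets this same amplitude decay via a single integration by parts that generates a $J_1$ factor, avoiding asymptotic-expansion error terms entirely. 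Relatedly, your account of the $n=2$ factor $(k|x_2|)^{1/2}\log(2+kL)$ as ``a stationary point approaching $r=0$'' is off the mark: in the paper that factor comes from the boundary term of the $I_2$ integration by parts, where the weight $\sqrt{r^2+x_2^2}/r$ evaluated at $r=1/k$ contributes $\sim\sqrt{1+k^2x_2^2}$ and $\int_{1/k}^1 dr/r$ contributes the $\log(2+kL)$; no stationary phase is involved.
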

\begin{proof}
For ease of presentation we present the proof only for the case $L=1$, but a simple rescaling step deals with the general case. It is convenient to introduce the notation $\txi :=|\bxi|$, and by $C>0$ we denote an arbitrary constant, independent of $k$, $\bxi$, and $x_n$, which may change from occurrence to occurrence. To prove \rf{EquivBounded} we proceed by estimating $|\widehat{ \Phi_1}(\bxi,x_n)|$ directly, using the formula \rf{FTRadial}. We treat the cases $n=3$ and $n=2$ separately. We will make use of the following well-known properties of the Bessel functions (cf.\ \cite[Sections 10.6, 10.14, 10.17]{DLMF}), where $\cB_n$ represents either $J_n$ or $H_n^{(1)}$:
\begin{align}
\label{JnEst}
|J_n(z)|\leq 1,& & & n\in\N,\,\,z>0,\\
\label{H0Est}
|H_0^{(1)}(z)|\leq C(1+|\log{z}|),& & & 0<z\leq 1\\
\label{H1Est}
|H_1^{(1)}(z)|\leq Cz^{-1},& & & 0<z\leq 1\\
\label{BnEst}
|\cB_n(z)|\leq Cz^{-1/2},& & & n\in\N,\,\,z>1,\\
\label{B0Der}
\cB_0'(z)= - \cB_1(z), & & & z>0,\\
\label{B1Der}
(z\cB_1)'(z)= z\cB_0(z), & & & z>0.
\end{align}

(i) In the case $n=3$, $|\widehat{ \Phi_1}(\bxi,x_3)|\leq |I|/(4\pi)$, %
where
\begin{align*}
\label{}
I:=\int_0^{1} \frac{\re^{\ri \tk \sqrt{r^2+x_3^2}}}{\sqrt{r^2+x_3^2}}J_0(\txi r)\,r\,\rd r = \underbrace{\int_0^{R}  \frac{\re^{\ri \tk \sqrt{r^2+x_3^2}}}{\sqrt{r^2+x_3^2}}J_0(\txi r)\,r\,\rd r}_{:=I_1} + \underbrace{\int_R^{1} \frac{\re^{\ri \tk \sqrt{r^2+x_3^2}}}{\sqrt{r^2+x_3^2}}J_0(\txi r)\,r\,\rd r}_{:=I_2},
\end{align*}
with $R:=\min\{1,1/\txi \}$ and $I_2:=0$ if $\txi\leq 1$. We distinguish three distinct subcases:
\begin{itemize}
\item When both $\tk\leq 1$ and $\txi \leq1 $, $R=1$ and \rf{JnEst} 
gives that $|I_1|\leq 1$. Also, in this case we have $I_2=0$ and $\sqrt{k^2+\xi^2}\leq \sqrt{2}$, so that
\begin{align}
\label{PEst1}
|\widehat{ \Phi_1}(\bxi,x_3)|\sqrt{k^2+\xi^2}\leq C, \qquad \tk\leq 1, \,\,\txi \leq1.
\end{align}
\item
When $\tk> 1$ and $\txi \leq \tk $, integration by parts, using the relation \rf{B0Der}, gives%
\begin{align*}
\label{}
I_1 = \frac{1}{\tk}\left[-\ri \re^{\ri \tk \sqrt{r^2+x_3^2}}J_0(\txi r)\right]_0^{R} - \frac{\ri \txi}{\tk}\int_{0}^R \re^{\ri \tk \sqrt{r^2+x_3^2}}J_1(\txi r)  \,\rd r,
\end{align*}
and by \rf{JnEst} we get
$|I_1| \leq (C/k) \left( 1 + R \xi \right) \leq C/k$.
If $\txi\leq 1$ then $R=1$ and $I_2=0$. If $\txi> 1$ then $R=1/\txi$ and
$I_2\neq 0$, and a similar integration by parts to that used for $I_1$ above,
combined with the bound \rf{BnEst}, gives 
the estimate
\footnote{In the case $\txi=\tk$ one can check (e.g.\ using Mathematica) that $I=e^{\ri\tk}(J_0(\tk)-\ri J_1(\tk)) \sim c\tk^{-1/2}$ as $\tk\to\infty$. So it would seem that the result \rf{I2EstComment} is sharp, and the $\tk^{1/2}$ inside the parentheses cannot be removed (e.g. by a further integration by parts), at least not in order to obtain a bound which is uniform in $\xi$, which is what we want here.}
\begin{align}
\label{I2EstComment}
|I_2| \leq \frac{C}{\tk}\left(1 + \txi^{1/2}\int_{1/\txi}^1 \frac{\rd r}{\sqrt{r}} \right) \leq \frac{C}{\tk}(1+\tk^{1/2}).
\end{align}
Finally, since $\sqrt{k^2+\xi^2}\leq \sqrt{2}k$ in this case, we conclude that
\begin{align}
\label{PEst2}
|\widehat{ \Phi_1}(\bxi,x_3)|\sqrt{k^2+\xi^2}\leq C(1+ k^{1/2}), \qquad \tk> 1, \,\,\txi\leq\tk.
\end{align}
\item When $\txi> 1$ and $\tk < \txi $,
$R=1/\xi$ and 
$|I_1|\leq 1/\xi$ by $\rf{JnEst}$.
Integration by parts, using the relation \rf{B1Der}, %
gives
\begin{align*}
\label{}
I_2 = \frac{1}{\txi}\left[\frac{r \re^{\ri \tk \sqrt{r^2+x_3^2}}}{\sqrt{r^2+x_3^2}}J_1(\txi r)\right]^1_{1/\txi} -\frac{1}{\txi} \int_{1/\txi}^1 r^2 \re^{\ri \tk \sqrt{r^2+x_3^2}}\left(\frac{\ri k}{r^2+x_3^2}-\frac{1}{(r^2+x_3^2)^{3/2}}\right) J_1(\txi r) \,\rd r,
\end{align*}
and using \rf{BnEst} we have
\begin{align*}
\label{}
|I_2| \leq \frac{C}{\txi}\left(1 + \frac{1 }{\txi^{1/2}} \int_{1/\txi}^1 \left(\frac{k }{\sqrt{r}} + \frac{1}{r^{3/2}}\right) \,\rd r \right)\leq \frac{C}{\txi}(1+\tk^{1/2}).
\end{align*}
Then, since $\sqrt{k^2+\xi^2}\leq \sqrt{2}\xi$ in this case, we conclude that
\begin{align}
\label{PEst3}
|\widehat{ \Phi_1}(\bxi,x_3)|\sqrt{k^2+\xi^2}\leq C(1+ k^{1/2}), \qquad \txi> 1, \,\,\tk <\txi.
\end{align}
\end{itemize}
Combining \rf{PEst1}, \rf{PEst2} and \rf{PEst3} gives the result \rf{EquivBounded} in the case $n=3$.

(ii) In the case $n=2$, $|\widehat{ \Phi_1}(\bxi,x_2)| \leq |I|/(2\sqrt{2\pi})$, %
where now
\begin{align*}
\label{}
I:=\int_0^{1} H_0^{(1)}(\tk \sqrt{r^2+x_2^2})\cos(\txi r)\,\rd r = \underbrace{\int_0^{R} H_0^{(1)}(\tk \sqrt{r^2+x_2^2})\cos(\txi r)\,\rd r}_{:=I_1} + \underbrace{\int_R^{1} H_0^{(1)}(\tk\sqrt{r^2+x_2^2})\cos(\txi r)\,\rd r}_{:=I_2},
\end{align*}
with $R=\min\{1,1/\tk \}$ and $I_2:=0$ for $\tk\leq 1$.
We distinguish the same three subcases as before:
\begin{itemize}
\item
When both $\tk\leq 1$ and $\txi \leq1 $, $R=1$, and from the monotonicity of $|H_0^{(1)}(z)|$, combined with the bound \rf{H0Est},
we get
\begin{align*}
\label{}
|I_1|\leq \int_0^{1} |H_0^{(1)}(\tk r)|\,\rd r \leq C(1+|\log{\tk}|) \leq C\log{(2+k^{-1})}.
\end{align*}
In this case we also have $I_2=0$ and $\sqrt{k^2+\xi^2}\leq \sqrt{2}$, so that
\begin{align}
\label{PEst1a}
|\widehat{ \Phi_1}(\bxi,x_2)|\sqrt{k^2+\xi^2}\leq C\log{(2+k^{-1})}, \qquad \tk\leq 1, \,\,\txi \leq1.
\end{align}
\item When $\tk> 1$ and $\txi \leq \tk $, $R=1/\tk$ and 
$|I_1|\leq C/k$ after use of \rf{H0Est}.
Integration by parts, using the relation \rf{B1Der}, gives
\begin{align*}
\label{}
I_2 = \frac{1}{\tk}\left[\frac{\sqrt{r^2+x_2^2}}{r}H_1^{(1)}(\tk \sqrt{r^2+x_2^2}) \cos{\txi r}\right]^1_{1/\tk} + \frac{1}{\tk} \int_{1/\tk}^1 \frac{\sqrt{r^2+x_2^2}}{r}H_1^{(1)}(\tk \sqrt{r^2+x_2^2}) \left( \txi \sin \txi r + \frac{\cos{\txi r}}{ r}\right) \,\rd r,
\end{align*}
and using \rf{BnEst} we have
\begin{align*}
\label{}
|I_2| &\leq \frac{C}{k} \left( 1 + \sqrt{k|x_2|}+ \frac{1}{k^{1/2}}\int_{1/\tk}^1\left(\frac{k}{\sqrt{r}} + \frac{1}{r^{3/2}} + \frac{k|x_2|^{1/2}}{r} + \frac{|x_2|^{1/2}}{r^2} \right)\,\rd r \right)\\
& \leq \frac{C}{\tk}\left(1+\tk^{1/2} + \sqrt{k|x_2|}\log{(2+k)}\right).
\end{align*}
Then, since $\sqrt{k^2+\xi^2}\leq \sqrt{2}k$ in this case, we have
\begin{align}
\label{PEst2a}
|\widehat{ \Phi_1}(\bxi,x_2)|\sqrt{k^2+\xi^2}\leq C \left(1+\tk^{1/2} + \sqrt{k|x_2|}\log{(2+k)}\right), 
\qquad \tk> 1, \,\,\txi \leq\tk.
\end{align}
\item When $\txi>1$ and $\tk < \txi $, integration by parts, using the relation \rf{B0Der}, gives
\begin{align}
\label{I1Estn2}
I_1 = \frac{1}{\txi}\left[H_0^{(1)}(\tk \sqrt{r^2+x_2^2}) \sin{\txi r}\right]_0^{R} + \frac{\tk}{\txi}\int_{0}^R \frac{r}{\sqrt{r^2+x_2^2}}H_1^{(1)}(\tk \sqrt{r^2+x_2^2}) \sin \txi r \,\rd r.
\end{align}
Noting that (cf.\ \cite[\S10.8 and \S10.17]{DLMF}) there exist $c_0,c_1>0$ such that $H_1^{(1)}(z) = c_0/z + F_1(z)$ for $z>0$,
where $|F_1(z)|\leq c_1$ for $z>0$, we can rewrite the second term in \rf{I1Estn2} as
\begin{align*}
\label{}
\frac{1}{\txi}\int_{0}^R \left(\frac{c_0r}{r^2+x_2^2} + \frac{kF_1r}{\sqrt{r^2+x_2^2}}\right)\sin \txi r \,\rd r.
\end{align*}
If $\tk\leq 1$ then $R=1$ and, 
noting that $\int_0^1(r\sin{\xi r})/(r^2+x_2^2)\,\rd r$ is bounded uniformly in $\xi>1$ and $x_2\in \R$, we have
$|I_1| \leq (C/\xi)\log{(2+k^{-1})}$.
Note also that $I_2=0$ in this case. On the other hand, if $\tk> 1$ then $R=1/\tk$ and $|I_1| \leq C/\txi$.
In this case $I_2\neq 0$, and a similar integration by parts to that used for $I_1$ above, combined with the bound \rf{BnEst}, gives $|I_2|\leq  (C/\xi)(1+k^{1/2})$. 
Finally, since $\sqrt{k^2+\xi^2}\leq \sqrt{2}\xi$ in this case, we conclude that
\begin{align}
\label{PEst3a}
|\widehat{ \Phi_1}(\bxi,x_2)|\sqrt{k^2+\xi^2}&\leq C\log{(2+k^{-1})}(1+ k^{1/2}), \qquad \txi> 1, \,\,\tk<\txi.
\end{align}
\end{itemize}
Combining \rf{PEst1a}, \rf{PEst2a} and \rf{PEst3a} gives the result \rf{EquivBounded} in the case $n=2$.
\end{proof}

Using this result we can prove:
\begin{thm}
\label{ThmSNormSmooth}
For any $s\in \R$, the single-layer operator $S_k$ defines a bounded linear operator $S_k:H^{s}_{\overline\Gamma}\to H^{s+1}(\Gamma)$, and there exists a constant $C>0$, independent of $k$ and $\Gamma$, such that, for all $\phi\in H^{s}_{\overline\Gamma}$, and with $L:=\diam{\Gamma}$,
\begin{align}
\label{SEst}
\norm{S_k \phi}{H^{s+1}_k(\Gamma)} \leq
\begin{cases}
C(1+ (kL)^{1/2})\norm{\phi}{H^{s}_k(\R^{n-1})}, & n=3,\\
C \log{(2+(kL)^{-1})}(1+ (kL)^{1/2})\norm{\phi}{H^{s}_k(\R^{n-1})}, & n=2,
\end{cases}
\qquad k>0.
\end{align}
\end{thm}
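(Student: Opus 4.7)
My plan is to combine the Fourier representation of $S_k$ from Theorem \ref{ThmFourierRep} with the kernel estimate from Lemma \ref{LemPhiFT}, using the fact that $\phi\in H^s_{\overline\Gamma}$ has compact support to truncate the fundamental solution.

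First, I would establish the estimate for $\phi\in\scrD(\Gamma)$. The key observation is that for $\tbx\in\Gamma$, the integral representation \rf{SDef} only involves values of $\Phi_c(\tbx-\tby,0)$ with $|\tbx-\tby|\leq L=\diam\Gamma$. Hence, with $\Phi_L$ as defined in \rf{PhiADef},
\begin{equation*}
(S_k^\infty\phi)(\tbx) = \big(\Phi_L(\cdot,0)\ast\phi\big)(\tbx), \qquad \tbx\in\Gamma,
\end{equation*}
where $\ast$ is convolution on $\R^{n-1}$. Since the restriction operator $|_\Gamma:H^{s+1}(\R^{n-1})\to H^{s+1}(\Gamma)$ has norm at most $1$, this gives
\begin{equation*}
\norm{S_k\phi}{H^{s+1}_k(\Gamma)} \leq \norm{\Phi_L(\cdot,0)\ast\phi}{H^{s+1}_k(\R^{n-1})}.
\end{equation*}

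Next, I would pass to the Fourier side. By the convolution theorem with our normalization of the Fourier transform, $\widehat{\Phi_L(\cdot,0)\ast\phi}(\bxi) = (2\pi)^{(n-1)/2}\widehat{\Phi_L}(\bxi,0)\widehat{\phi}(\bxi)$, so by Parseval
\begin{equation*}
\norm{\Phi_L(\cdot,0)\ast\phi}{H^{s+1}_k(\R^{n-1})}^2 = (2\pi)^{n-1}\int_{\R^{n-1}}(k^2+|\bxi|^2)^{s+1}\big|\widehat{\Phi_L}(\bxi,0)\big|^2|\widehat{\phi}(\bxi)|^2\,\rd\bxi.
\end{equation*}
Applying Lemma \ref{LemPhiFT} with $x_n=0$ (so that the $(k|x_2|)^{1/2}\log(2+kL)$ term vanishes for $n=2$) yields, pointwise in $\bxi$,
\begin{equation*}
(k^2+|\bxi|^2)|\widehat{\Phi_L}(\bxi,0)|^2 \leq C^2\,A(kL)^2,
\end{equation*}
where $A(kL):=1+(kL)^{1/2}$ for $n=3$ and $A(kL):=\log(2+(kL)^{-1})(1+(kL)^{1/2})$ for $n=2$. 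Substituting gives the required estimate
$\norm{S_k\phi}{H^{s+1}_k(\Gamma)}\leq C'A(kL)\norm{\phi}{H^s_k(\R^{n-1})}$ for all $\phi\in\scrD(\Gamma)$.

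Finally, I would extend to $\phi\in H^s_{\overline\Gamma}$ by density. Since $\scrD(\Gamma)$ is not necessarily dense in $H^s_{\overline\Gamma}$, I would use Remark \ref{rem:ss_equality}: for every $\eps>0$, $H^s_{\overline\Gamma}\subset \tilde H^s(\Gamma_\eps)$, so $\phi$ can be approximated in the $H^s_k(\R^{n-1})$-norm by a sequence $\phi_j\in\scrD(\Gamma_\eps)$. The same argument applied to $\phi_j$ (whose support has diameter at most $L+2\eps$) gives the bound with $L$ replaced by $L+2\eps$. The continuity of $S_k:H^s_{\overline{\Gamma_\eps}}\to H^{s+1}(\Gamma_\eps)$ (restricted then to $\Gamma$) lets me pass to the limit $j\to\infty$, and then sending $\eps\to 0^+$ recovers \rf{SEst} with the stated $L=\diam\Gamma$, using continuity of $A$ at $kL$.

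The main subtlety is the truncation step: naively bounding $\widehat{S_k^\infty\phi}=\frac{\ri}{2Z(\bxi)}\widehat{\phi}$ fails because $1/Z(\bxi)$ is singular on the sphere $|\bxi|=k$, so one cannot get an $H^{s+1}$-bound directly from \rf{SFourierRep}. Replacing $\Phi_c$ by its truncation $\Phi_L$ smears out this singularity in Fourier space; the careful bookkeeping of Lemma \ref{LemPhiFT}, estimating the Fourier transform of the truncated kernel through integration by parts against $J_0/H_0^{(1)}$, is what produces the mild $(kL)^{1/2}$ growth. The density argument at the end is technically minor but essential given the possible gap between $\tilde H^s(\Gamma)$ and $H^s_{\overline\Gamma}$ for non-Lipschitz $\Gamma$.
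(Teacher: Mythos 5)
Your proposal is correct and follows essentially the same route as the paper's proof: truncate $\Phi_c$ to $\Phi_L$ using $\diam\Gamma = L$, pass to the Fourier side via the convolution theorem, invoke Lemma \ref{LemPhiFT} with $x_n=0$ pointwise in $\bxi$ to bound the multiplier, and extend from $\scrD(\Gamma)$ (dense in $\tilde H^s(\Gamma)$) to $H^s_{\overline\Gamma}$ via Remark \ref{rem:ss_equality} with the $\Gamma_\eps$ approximation and $\eps\to 0^+$. Your closing explanation of why truncation is necessary — that $1/Z$ blows up on $|\bxi|=k$ so a direct Fourier-multiplier bound fails — is a helpful gloss of the idea the paper leaves implicit.
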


\begin{proof}%
We first prove \rf{SEst} for $\tilde{H}^s(\Gamma)$. By the density of $\scrD(\Gamma)$ in $\tilde{H}^s(\Gamma)$ it suffices to prove \rf{SEst} for $\phi\in \scrD(\Gamma)$. For $\phi\in \scrD(\Gamma)$ we first note that $S_k\phi = (S_k^L \phi)|_\Gamma$, where $S_k^L:\scrD(\R^{n-1})\to \scrD(\R^{n-1})$ is the convolution operator 
defined by 
$S_k^L\varphi: = (\Phi_L(\cdot,0)\ast \varphi)$, for $\varphi\in C_0^\infty(\R^{n-1})$, 
where $\Phi_L$ is defined as in \rf{PhiADef}. 
While $S_k^\infty\varphi \in C^\infty(\R^{n-1})$ for $\varphi\in \scrD(\R^{n-1})$, the fact that $\Phi_L$ has compact support means that $S_k^L\varphi\in \scrD(\R^{n-1})\subset H^{s+1}(\R^{n-1})$ (cf.\ \cite[Corollary 5.4-2a]{Zemanian}). 
Therefore, for $\phi\in \scrD(\Gamma)$ we can estimate 
$\norm{S_k \phi}{H^{s+1}_k(\Gamma)}\leq \normt{S_k^L \phi}{H^{s+1}_k(\R^{n-1})}$, 
and since 
$\widehat{S_k^L \varphi}(\bxi) = \widehat{( \Phi_L(\cdot,0)\ast \varphi)}(\bxi) = (2\pi)^{(n-1)/2}\widehat{ \Phi_L}(\bxi,0)\hat{\varphi}(\bxi)$ 
for any $\varphi\in \scrD(\R^{n-1})$, 
the bound \rf{SEst} follows from Lemma \ref{LemPhiFT}. 

Finally, we can extend the bound \rf{SEst} to $\phi\in H^{s}_{\overline\Gamma}$ (without changing the constant) by appealing to Remark \ref{rem:ss_equality} (we can approximate $\phi\in H^{s}_{\overline\Gamma}$ arbitrarily well by an element of $\tilde H^s (\Gamma_\eps)$ for $\Gamma_\eps$ an arbitrarily small neighbourhood of $\Gamma$).
\end{proof}

\begin{thm}
\label{ThmCoerc}
The sesquilinear form on $\tilde{H}^{-1/2}(\Gamma)\times \tilde{H}^{-1/2}(\Gamma)$ defined by
\begin{align*}
\label{}
a(\phi,\psi):=\langle S_k\phi,\psi \rangle_{H^{1/2}(\Gamma)\times \tilde{H}^{-1/2}(\Gamma)}, \quad \phi,\psi\in \tilde{H}^{-1/2}(\Gamma),
\end{align*}
satisfies the coercivity estimate
\begin{align}
\label{SCoercive}
|a(\phi,\phi)|
\geq \frac{1}{2\sqrt{2}} \norm{\phi}{\tilde{H}^{-1/2}_k(\Gamma)}^2,\quad \phi\in \tilde{H}^{-1/2}(\Gamma), \,\, k>0.
\end{align}
\end{thm}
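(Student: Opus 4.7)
The plan is to work from the Fourier representation \rf{SIPFourierRep} in Theorem \ref{ThmFourierRep}, exploit the fact that $Z(\bxi)$ is real for $|\bxi|\leq k$ and purely imaginary for $|\bxi|>k$, and conclude via elementary pointwise comparisons of the symbols.

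First, I would reduce to the case $\phi\in\scrD(\Gamma)$. Since $\scrD(\Gamma)$ is dense in $\tilde{H}^{-1/2}(\Gamma)$, and both $\phi\mapsto a(\phi,\phi)$ and $\phi\mapsto\|\phi\|_{\tilde{H}^{-1/2}_k(\Gamma)}^2$ are continuous on $\tilde{H}^{-1/2}(\Gamma)$ (the former by continuity of $S_k:\tilde H^{-1/2}(\Gamma)\to H^{1/2}(\Gamma)$ from Theorem \ref{ThmSNormSmooth} together with continuity of the duality pairing), it suffices to prove \rf{SCoercive} for such $\phi$. For $\phi\in\scrD(\Gamma)$ the duality pairing $\langle S_k\phi,\phi\rangle_{H^{1/2}(\Gamma)\times\tilde H^{-1/2}(\Gamma)}$ agrees with $(S_k\phi,\phi)_{L^2(\Gamma)}$, so by \rf{SIPFourierRep}
\begin{align*}
a(\phi,\phi)=\frac{\ri}{2}\int_{\R^{n-1}}\frac{|\hat\phi(\bxi)|^2}{Z(\bxi)}\,\rd\bxi.
\end{align*}

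Next I would split this integral according to the definition \rf{ZDef} of $Z$. On $\{|\bxi|\leq k\}$ we have $Z(\bxi)=\sqrt{k^2-|\bxi|^2}\in\R_{>0}$, contributing a purely imaginary term with positive imaginary part; on $\{|\bxi|>k\}$ we have $Z(\bxi)=\ri\sqrt{|\bxi|^2-k^2}$, so $\ri/Z(\bxi)=1/\sqrt{|\bxi|^2-k^2}\in\R_{>0}$, contributing a positive real term. Thus
\begin{align*}
\real{a(\phi,\phi)}=\frac{1}{2}\int_{|\bxi|>k}\frac{|\hat\phi(\bxi)|^2}{\sqrt{|\bxi|^2-k^2}}\,\rd\bxi,\qquad \im{a(\phi,\phi)}=\frac{1}{2}\int_{|\bxi|\leq k}\frac{|\hat\phi(\bxi)|^2}{\sqrt{k^2-|\bxi|^2}}\,\rd\bxi,
\end{align*}
both nonnegative. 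The trivial pointwise inequalities $\sqrt{k^2-|\bxi|^2}\leq \sqrt{k^2+|\bxi|^2}$ for $|\bxi|\leq k$ and $\sqrt{|\bxi|^2-k^2}\leq \sqrt{k^2+|\bxi|^2}$ for $|\bxi|>k$ give
\begin{align*}
\real{a(\phi,\phi)}+\im{a(\phi,\phi)}\geq \frac{1}{2}\int_{\R^{n-1}}\frac{|\hat\phi(\bxi)|^2}{\sqrt{k^2+|\bxi|^2}}\,\rd\bxi=\frac{1}{2}\|\phi\|_{H^{-1/2}_k(\R^{n-1})}^2=\frac{1}{2}\|\phi\|_{\tilde H^{-1/2}_k(\Gamma)}^2.
\end{align*}

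Finally I would combine these two nonnegative contributions using $|z|=\sqrt{(\real{z})^2+(\im{z})^2}\geq (\real{z}+\im{z})/\sqrt 2$ (valid when $\real z,\im z\geq 0$, by the elementary inequality $(a+b)^2\leq 2(a^2+b^2)$) to obtain
\begin{align*}
|a(\phi,\phi)|\geq \frac{1}{\sqrt 2}\bigl(\real{a(\phi,\phi)}+\im{a(\phi,\phi)}\bigr)\geq \frac{1}{2\sqrt 2}\|\phi\|_{\tilde H^{-1/2}_k(\Gamma)}^2,
\end{align*}
which is \rf{SCoercive}. The proof is essentially routine once the Fourier symbol $\ri/Z(\bxi)$ has been split into its propagating and evanescent parts; there is no serious obstacle, which is consistent with the comment in the introduction that coercivity of $S_k$ for real $k$ is much easier than for $T_k$. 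The only subtlety worth flagging is the identification of the sesquilinear form with the Fourier integral on the smooth dense subspace, and the verification that the constant $1/(2\sqrt 2)$ is $k$-independent.
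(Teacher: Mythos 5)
Your argument is correct and is essentially identical to the paper's: both reduce by density to $\phi\in\scrD(\Gamma)$, invoke the Fourier representation \rf{SIPFourierRep}, observe that $\ri/Z(\bxi)$ contributes a nonnegative imaginary part on $\{|\bxi|\le k\}$ and a nonnegative real part on $\{|\bxi|>k\}$, and then combine the elementary inequalities $|z|\ge(\real{z}+\im{z})/\sqrt 2$ and $\sqrt{|k^2-|\bxi|^2|}\le\sqrt{k^2+|\bxi|^2}$. You have simply spelled out the intermediate steps that the paper compresses into the single display \rf{aCoercProof}.
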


\begin{proof}%
By the density of $\scrD(\Gamma)$ in $\tilde{H}^{-1/2}(\Gamma)$ it suffices to prove \rf{SCoercive} for $\phi\in \scrD(\Gamma)$.  For such a $\phi$, formula \rf{SIPFourierRep} from Theorem \ref{ThmFourierRep} gives
\begin{align}
\label{aCoercProof}
 |a(\phi,\phi)|  = \frac{1}{2}\left|\int_{\R^{n-1}} \frac{|\widehat{\phi}(\bxi)|^2}{Z(\bxi)} \,\rd \bxi \right|
\geq %
\frac{1}{2\sqrt{2}} \int_{\R^{n-1}} \frac{|\widehat{\phi}(\bxi)|^2}{\sqrt{|k^2-|\bxi|^2|}} \,\rd \bxi
\geq\frac{1}{2\sqrt{2}} \int_{\R^{n-1}} \frac{|\widehat{\phi}(\bxi)|^2}{\sqrt{k^2+|\bxi|^2}} \,\rd \bxi,
\end{align}
as claimed.
\end{proof}

\begin{rem}
We can show that the bounds established in Theorem \ref{ThmSNormSmooth} are sharp in their dependence on $k$ as $k\to\infty$, at least for the case $s=-1/2$. For simplicity of presentation we assume that $\diam{\Gamma}=1$ and $k>1$. Let $\phi(\tbx):=\re^{\ri k\tbd\cdot \tbx}\psi(\tbx)$ for $\tbx\in\R^{n-1}$, where $\tbd\in \R^{n-1}$ is a unit vector and $0\neq \psi\in \scrD(\Gamma)$ is independent of $k$, depending only on the shape of $\Gamma$. Then $\widehat{\phi}(\bxi) = \hat{\psi}(\boldsymbol{\eta})$, where $\boldsymbol{\eta}=\bxi-k\tbd$, and for any $\eta_*\geq 1$ the first inequality in \rf{aCoercProof} gives that
\begin{align}
\label{CRem1}
|a(\phi,\phi)| \geq %
\frac{1}{2\sqrt{2} k} \int_{\R^{n-1}} \frac{|\widehat{\psi}(\boldsymbol{\eta})|^2}{\sqrt{|1-|\tbd+\boldsymbol{\eta}/k|^2|}} \,\rd \boldsymbol{\eta}
\geq \frac{1}{2\sqrt{6}\eta_*k^{1/2}} \int_{|\boldsymbol{\eta}|\leq \eta_*} |\widehat{\psi}(\boldsymbol{\eta})|^2\,\rd \boldsymbol{\eta},
\end{align}
since $|1-|\tbd+\boldsymbol{\eta}/k|^2| \leq (1/k)|2\tbd\cdot \boldsymbol{\eta} + |\boldsymbol{\eta}|^2/k |  \leq3\eta_*^2/k$, 
for $|\boldsymbol{\eta}|\leq \eta_*$. %
Also, for the same choice of $\phi$,
\begin{align}
\label{CRem2}
 \norm{\phi}{\tilde{H}^{-1/2}_k(\Gamma)}^2 = \frac{1}{k} \int_{\R^{n-1}} \frac{|\widehat{\psi}(\boldsymbol{\eta})|^2}{\sqrt{|1+|\tbd+\boldsymbol{\eta}/k|^2|}} \,\rd \boldsymbol{\eta}
\leq  \frac{1}{k} \int_{\R^{n-1}} |\widehat{\psi}(\boldsymbol{\eta})|^2\,\rd \boldsymbol{\eta}
\leq  \frac{2}{k} \int_{|\boldsymbol{\eta}|\leq \eta_*} |\widehat{\psi}(\boldsymbol{\eta})|^2\,\rd \boldsymbol{\eta},
\end{align}
for $\eta_*$ sufficiently large.
Combining \rf{CRem1} and \rf{CRem2} gives 
$|a(\phi,\phi)| \geq (k^{1/2}/(4\sqrt{6}\eta_*))\normt{\phi}{\tilde{H}^{-1/2}_k(\Gamma)}^2,$
and then, since $|a(\phi,\phi)| \leq  \norm{S_k\phi}{H^{1/2}_k(\Gamma)} \norm{\phi}{\tilde{H}^{-1/2}_k(\Gamma)}$, we conclude that, for this particular choice of $\phi$,%
\begin{align*}
\label{}
 \norm{S_k\phi}{H^{1/2}_k(\Gamma)}  \geq (k^{1/2}/(4\sqrt{6}\eta_*)) \norm{\phi}{\tilde{H}^{-1/2}_k(\Gamma)},
\end{align*}
which demonstrates the sharpness of \rf{SEst} in the limit $k\to\infty$.
\end{rem}

\section{$k$-explicit analysis of $T_k$}
\label{sec:TkAnalysis}
This section improves upon and generalizes the result of \cite[Theorem 2]{Ha-Du:92}, sharpening the $k$-dependence of the bounds on the coercivity constant, and providing estimates in the 2D case, which was not considered in \cite{Ha-Du:92}.

\begin{thm}
\label{ThmTNormSmooth}
For any $s\in\R$, the hypersingular operator $T_k$ defines a bounded linear operator $T_k:H^{s}_{\overline\Gamma}\to H^{s-1}(\Gamma)$, and
\begin{align}
\label{TEst}
\norm{T_k \phi}{H^{s-1}_k(\Gamma)} \leq \frac{1}{2}\norm{\phi}{H^{s}_k(\R^{n-1})}, \quad \phi\in H^{s}_{\overline\Gamma}, \,\,k>0.
\end{align}
\end{thm}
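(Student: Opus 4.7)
The plan is to use the Fourier multiplier representation of $T_k$ from Theorem \ref{ThmFourierRep}: on smooth arguments, $T_k^\infty$ acts as a Fourier multiplier with symbol $(\mathrm{i}/2) Z(\bxi)$. The key elementary observation is that the symbol is essentially of order $1$ with a uniform constant in $k$:
\begin{equation*}
|Z(\bxi)|^2 \;=\; \bigl||\bxi|^2 - k^2\bigr| \;\leq\; |\bxi|^2 + k^2,
\qquad \bxi\in\R^{n-1}.
\end{equation*}
This is precisely what pairs with the $k$-weighted Bessel potential norm of $H^s_k(\R^{n-1})$, so we get a clean constant $1/2$ with no $k$-dependent penalty.

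First I would prove the bound for $\phi\in\scrD(\Gamma)$. By Theorem \ref{ThmFourierRep}, $\widehat{T_k^\infty\phi}(\bxi) = (\mathrm{i}/2) Z(\bxi)\,\widehat\phi(\bxi)$, so using the Fourier definition \eqref{eq:HsProdNorm} of the norm and the symbol estimate above,
\begin{equation*}
\|T_k^\infty\phi\|_{H_k^{s-1}(\R^{n-1})}^2
= \tfrac14 \int_{\R^{n-1}} (k^2+|\bxi|^2)^{s-1}|Z(\bxi)|^2 |\widehat\phi(\bxi)|^2\,\rd\bxi
\leq \tfrac14 \|\phi\|_{H^s_k(\R^{n-1})}^2.
\end{equation*}
Since $T_k\phi = (T_k^\infty\phi)|_\Gamma$ by Theorem \ref{ThmFourierRep}, the definition of the restriction norm $\|\cdot\|_{H^{s-1}_k(\Gamma)}$ immediately yields $\|T_k\phi\|_{H^{s-1}_k(\Gamma)} \leq \tfrac12 \|\phi\|_{H^s_k(\R^{n-1})}$.

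Next I extend $T_k$ to all of $H^s_{\overline\Gamma}$. The natural candidate is to define $T_k \phi := (T_k^\infty \phi)|_\Gamma$ for $\phi\in H^s_{\overline\Gamma}\subset H^s(\R^{n-1})$, where $T_k^\infty$ is realised on $H^s(\R^{n-1})$ as the Fourier multiplier $(\mathrm{i}/2)Z(\bxi)$ (bounded $H^s(\R^{n-1})\to H^{s-1}(\R^{n-1})$ by the same symbol estimate). The inequality \eqref{TEst} for general $\phi\in H^s_{\overline\Gamma}$ then follows from the smooth case by density: by Remark \ref{rem:ss_equality}, $H^s_{\overline\Gamma}\subset\tilde H^s(\Gamma_\eps)$ for every $\eps>0$, so $\phi$ can be approximated in $H^s(\R^{n-1})$ by elements of $\scrD(\Gamma_\eps)$; applying the already-established bound on these approximants (with $\Gamma$ replaced by $\Gamma_\eps$, which does not affect the $H^{s-1}_k(\R^{n-1})$-norm estimate) and passing to the limit gives the result.

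The main obstacle is verifying that this extended $T_k$ is \emph{consistent} with the original definition of $T_k:H^{1/2}_{\overline\Gamma}\to H^{-1/2}(\Gamma)$ as the normal trace of the double-layer potential, so that the theorem legitimately concerns the same operator for all $s$ rather than a new multiplier operator sharing the same name. This amounts to showing that for smooth $\phi\in\scrD(\Gamma)$ the Fourier-multiplier formula \eqref{TFourierRep} agrees with $T_k\phi = \partial^\pm_\bn(\chi\cD_k\phi)|_\Gamma$, which is exactly the content of Theorem \ref{ThmFourierRep}; and then for the general $s=1/2$ case, both definitions of $T_k$ are continuous extensions from $\scrD(\Gamma)$ (dense in $\tilde H^{1/2}(\Gamma)$, and reachable from $H^{1/2}_{\overline\Gamma}$ via Remark \ref{rem:ss_equality}) and thus coincide. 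With this identification in place, the extension to all $s\in\R$ is canonical and \eqref{TEst} holds without any geometric hypotheses on $\Gamma$.
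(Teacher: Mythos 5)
Your proof is correct and follows essentially the same route as the paper's: Fourier multiplier representation of $T_k^\infty$ from Theorem \ref{ThmFourierRep}, the pointwise symbol bound $|Z(\bxi)|^2=\big||\bxi|^2-k^2\big|\le k^2+|\bxi|^2$, the restriction-norm inequality $\|T_k\phi\|_{H^{s-1}_k(\Gamma)}\le\|T_k^\infty\phi\|_{H^{s-1}_k(\R^{n-1})}$ for $\phi\in\scrD(\Gamma)$, and then density of $\scrD(\Gamma)$ in $\tilde H^s(\Gamma)$ plus Remark \ref{rem:ss_equality} to reach $H^s_{\overline\Gamma}$. The extra paragraph you add checking consistency between the multiplier realisation and the potential-trace definition of $T_k$ is a sensible remark that the paper leaves implicit, but it does not change the substance of the argument.
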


\begin{proof}
Again, we give the proof for $\phi\in\tilde{H}^s(\Gamma)$, the extension to $\phi\in H^{s}_{\overline\Gamma}$ being justified using Remark \ref{rem:ss_equality}.  
By the density of $\scrD(\Gamma)$ in $\tilde{H}^s(\Gamma)$ it suffices to prove \rf{TEst} for $\phi\in \scrD(\Gamma)$. For such a $\phi$ we first note from Theorem \ref{ThmFourierRep} that $T_k\phi = (T_k^\infty \phi)|_\Gamma$, where 
$\widehat{T_k^\infty\varphi}(\bxi) = (\ri/2) Z(\bxi) \hat{\varphi}(\bxi)$, for $\varphi\in \scrD(\R^{n-1})$.
Clearly, for any $\varphi\in \scrD(\R^{n-1})$ and any $s\in \R$, the integral
\begin{align*}
\label{}
\int_{\R^{n-1}}(k^2+|\bxi|^2)^{s-1} |\widehat{T_k^\infty \varphi}(\bxi)|^2 \, \rd \bxi
\end{align*}
is finite, and hence $T_k^\infty\varphi\in H^{s-1}(\R^{n-1})$. As a result, given $\phi\in \scrD(\Gamma)$ we can estimate
\begin{align}
\label{}
\norm{T_k \phi}{H^{s-1}_k(\Gamma)} \leq \norm{T_k^\infty \phi}{H^{s-1}_k(\R^{n-1})}
& = \frac{1}{2}\sqrt{\int_{\R^{n-1}}(k^2+|\bxi|^2)^{s-1} |Z(\bxi)|^2|\widehat{\phi}(\bxi)|^2 \, \rd \bxi}
\notag\\ &
 \leq \frac{1}{2}\sqrt{\int_{\R^{n-1}}(k^2+|\bxi|^2)^{s}| \widehat{\phi}(\bxi)|^2 \, \rd \bxi},%
\end{align}
as required.
\end{proof}

\begin{thm}
\label{ThmTCoercive}
The sesquilinear form on $\tilde{H}^{1/2}(\Gamma)\times \tilde{H}^{1/2}(\Gamma)$ defined by
\begin{align*}
\label{}
b(\phi,\psi):=\langle T_k\phi,\psi \rangle_{H^{-1/2}(\Gamma)\times \tilde{H}^{1/2}(\Gamma)}, \quad \phi,\psi\in \tilde{H}^{1/2}(\Gamma),
\end{align*}
satisfies, for any $k_0>0$, the coercivity estimate
\begin{align}
\label{bEst}
|b(\phi,\phi)|\geq 
C(kL)^{\beta}
\norm{\phi}{\tilde{H}^{1/2}_k(\Gamma)}^2, \quad \phi\in \tilde{H}^{1/2}(\Gamma),\,\,k\geq k_0,
\end{align}
where $L:=\diam{\Gamma}$, 
$C>0$ is a constant depending only on $k_0$, %
and
\begin{align*}
\label{}
\beta=
\begin{cases}
-\frac{2}{3},&n=3,\\
-\frac{1}{2}, & n=2.
\end{cases}
\end{align*}
\end{thm}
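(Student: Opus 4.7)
The plan is to use the Fourier representation \rf{TIPFourierRep} from Theorem \ref{ThmFourierRep}: for $\phi\in\scrD(\Gamma)$ (and thence for all $\phi\in\tilde H^{1/2}(\Gamma)$ by density),
\[
b(\phi,\phi)\;=\;\frac{\ri}{2}\int_{|\bxi|\le k}\!\sqrt{k^2-|\bxi|^2}\,|\widehat\phi(\bxi)|^2\rd\bxi\;-\;\frac{1}{2}\int_{|\bxi|>k}\!\sqrt{|\bxi|^2-k^2}\,|\widehat\phi(\bxi)|^2\rd\bxi.
\]
Since the real and imaginary parts are supported on disjoint regions, $|b(\phi,\phi)|\ge\frac{1}{2\sqrt 2}\int_{\R^{n-1}}|Z(\bxi)|\,|\widehat\phi|^2\rd\bxi$, where $|Z(\bxi)|=\sqrt{||\bxi|^2-k^2|}$. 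The task is to control the norm $\|\phi\|^2_{\tilde H^{1/2}_k(\Gamma)}=\int\sqrt{k^2+|\bxi|^2}|\widehat\phi|^2\rd\bxi$ by this lower bound. The symbol $|Z(\bxi)|$ is comparable to $\sqrt{k^2+|\bxi|^2}$ away from the characteristic sphere $\{|\bxi|=k\}$, but vanishes on it, which is the source of the $(kL)^\beta$ loss.

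I would split $\R^{n-1}$ into three regions depending on a parameter $\delta\in(0,1)$: a \emph{good} region $B:=\{|\bxi|\le k/\sqrt 2\}\cup\{|\bxi|\ge k\sqrt 2\}$ on which $\sqrt{k^2+|\bxi|^2}\le\sqrt 3\,|Z(\bxi)|$; a \emph{transition} annulus $M_\delta:=\{k/\sqrt 2<|\bxi|<k\sqrt 2,\,||\bxi|-k|\ge\delta k\}$ on which $|Z(\bxi)|\ge k\sqrt\delta$ and $\sqrt{k^2+|\bxi|^2}\le\sqrt 3\,k$; and the \emph{critical} annulus $A_\delta:=\{||\bxi|-k|<\delta k\}$. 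On $B\cup M_\delta$ the pointwise inequality $\sqrt{k^2+|\bxi|^2}\le C\delta^{-1/2}|Z(\bxi)|$ yields directly
\[
\int_{B\cup M_\delta}\sqrt{k^2+|\bxi|^2}|\widehat\phi|^2\rd\bxi\;\le\; C\delta^{-1/2}|b(\phi,\phi)|,
\]
so the whole burden falls on the annulus $A_\delta$.

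The main obstacle is to bound $\int_{A_\delta}|\widehat\phi|^2\rd\bxi$ sharply, and this is where the compact support $\supp\phi\subset\overline\Gamma\subset B_L$ with $L=\diam\Gamma$ enters, via a restriction-type estimate on each concentric sphere $S_r^{n-2}\subset\R^{n-1}$ for $r\in[(1-\delta)k,(1+\delta)k]$. For $n=2$, $S_r^{0}$ is just two points and Cauchy-Schwarz gives $|\widehat\phi(\pm r)|^2\le\frac{L}{2\pi}\|\phi\|_{L^2}^2$, hence $\int_{A_\delta}|\widehat\phi|^2\rd\bxi\le C\delta kL\|\phi\|_{L^2}^2$. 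For $n=3$ (the improvement over \cite{Ha-Du:92}), the key new ingredient is the Stein-Tomas restriction theorem for the unit circle in $\R^2$: by scaling, $\int_{S_r^{1}}|\widehat\phi|^2\rd\sigma_r\le Cr^{1/3}\|\phi\|_{L^{6/5}(\R^2)}^2$, and combining with the H\"older-type inequality $\|\phi\|_{L^{6/5}(\R^2)}\le CL^{2/3}\|\phi\|_{L^2(\R^2)}$ (valid since $\supp\phi\subset B_L$) yields $\int_{S_r^{1}}|\widehat\phi|^2\rd\sigma_r\le C(kL)^{1/3}L\|\phi\|_{L^2}^2$, hence $\int_{A_\delta}|\widehat\phi|^2\rd\bxi\le C\delta k(kL)^{1/3}L\|\phi\|_{L^2}^2$. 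Using $\|\phi\|_{L^2}^2\le k^{-1}\|\phi\|^2_{\tilde H^{1/2}_k(\Gamma)}$ (since $\sqrt{k^2+|\bxi|^2}\ge k$), both cases become
\[
\int_{A_\delta}\sqrt{k^2+|\bxi|^2}|\widehat\phi|^2\rd\bxi\;\le\;Ck\int_{A_\delta}|\widehat\phi|^2\rd\bxi\;\le\;C\delta(kL)^{1+\alpha}\|\phi\|^2_{\tilde H^{1/2}_k(\Gamma)},
\]
with $\alpha=0$ for $n=2$ and $\alpha=1/3$ for $n=3$.

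Combining the two estimates gives $\|\phi\|^2_{\tilde H^{1/2}_k(\Gamma)}\le C\delta^{-1/2}|b(\phi,\phi)|+C\delta(kL)^{1+\alpha}\|\phi\|^2_{\tilde H^{1/2}_k(\Gamma)}$. Optimising by choosing $\delta=c(kL)^{-(1+\alpha)}$ (admissible once $kL$ exceeds a threshold depending only on $k_0$) absorbs the last term into the left-hand side and yields $|b(\phi,\phi)|\ge C(kL)^{-(1+\alpha)/2}\|\phi\|^2_{\tilde H^{1/2}_k(\Gamma)}$, i.e.\ $\beta=-1/2$ for $n=2$ and $\beta=-2/3$ for $n=3$. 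The remaining regime, where $k\ge k_0$ but $kL$ is bounded, is handled separately by absorbing it into $C(k_0)$: on any such bounded range of $kL$ the estimate $(kL)^\beta$ is itself bounded, and a uniform coercivity constant follows from treating $T_k$ as a compact (lower-order) perturbation of, say, the static hypersingular operator, whose coercivity on $\tilde H^{1/2}(\Gamma)$ is classical.
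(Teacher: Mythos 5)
Your proof is correct in its main thrust but takes a genuinely different route from the paper, most notably for $n=3$. Both arguments begin from the Fourier representation \rf{TIPFourierRep}, deduce $|b(\phi,\phi)|\ge\frac{1}{2\sqrt2}\int_{\R^{n-1}}|Z(\bxi)|\,|\widehat\phi(\bxi)|^2\,\rd\bxi$, and split $\R^{n-1}$ into a region away from $\{|\bxi|=k\}$ (where $\sqrt{k^2+|\bxi|^2}\lesssim|Z(\bxi)|$ pointwise) and a thin critical annulus around that sphere. The divergence is entirely in the treatment of the critical annulus. The paper's argument is elementary: it exploits the compact support of $\phi$ only through a Lipschitz estimate $|\widehat\phi(\bxi_1)-\widehat\phi(\bxi_2)|\le\tilde c_n|\bxi_1-\bxi_2|\,k^{-1/2}J^{1/2}$, then shifts the annular integrals radially by $\pm\eps$ so they become comparable to the contributions from the good region, picking up an absorbable error of order $\eps^3k^{n-2}J$; optimising $\eps\sim k^{(2-n)/3}$ yields $\beta=-1/2+(2-n)/6$, i.e.\ $-1/2$ and $-2/3$. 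Your argument for $n=3$ instead bounds the Fourier trace on each circle $S_r^1$ via the Stein--Tomas restriction theorem plus H\"older, which is the natural and more conceptual tool for controlling Fourier mass on a codimension-one surface, and which is precisely what improves the exponent from the $-1$ that a naive $L^1\to L^\infty$ bound on $\widehat\phi$ would give (that naive bound is all you need for $n=2$, where the ``sphere'' is two points). So your route is legitimate and arrives at the same $\beta$, at the cost of importing a nontrivial harmonic-analysis input where the paper's proof is self-contained.

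Two small cautions. First, it is cleaner to rescale to $\diam\Gamma=1$ at the outset (as the paper does); the combination $(kL)^\beta$ then emerges purely from scaling, and the admissibility constraint $\delta<1$ on your splitting parameter becomes a constraint $k\gtrsim1$, which is exactly the $k\ge k_0$ hypothesis with the threshold absorbed into $C(k_0)$. Second, your closing sentence on the ``bounded $kL$'' regime is not quite right: when $kL<1$ the factor $(kL)^\beta$ with $\beta<0$ is large, so the claimed bound is \emph{stronger}, not weaker, than a uniform coercivity constant, and the compact-perturbation argument you invoke only gives an unquantified, $\Gamma$-dependent constant in any case. Rescaling first sidesteps this regime entirely and is the cleaner way to close the proof.
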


\begin{proof}
As in the proof of Lemma \ref{LemPhiFT}, for ease of presentation we present the proof only for the case $L=1$, but a simple rescaling deals with the general case. 
By the density of $\scrD(\Gamma)$ in $\tilde{H}^{1/2}(\Gamma)$ it suffices to prove \rf{bEst} for $\phi\in \scrD(\Gamma)$. For such a $\phi$, equation \rf{TIPFourierRep} from Theorem \ref{ThmFourierRep} gives that
\begin{align}
\label{bI}
|b(\phi,\phi)| = \frac{1}{2}\left|\int_{\R^{n-1}} Z(\bxi) |\widehat{\phi}(\bxi)|^2 \,\rd \bxi\right| \geq \frac{I}{2\sqrt{2}},
\end{align}
where
\begin{align*}
I:=\int_{\R^{n-1}} |Z(\bxi)| |\widehat{\phi}(\bxi)|^2 \,\rd \bxi.
\end{align*}
Defining
\begin{align*}
\label{}
J:=\norm{\phi}{\tilde{H}^{1/2}_k(\Gamma)}^2 =\int_{\R^{n-1}}(k^2+|\bxi|^2)^{1/2}|\widehat{\phi}(\bxi)|^2\,\rd \bxi,
\end{align*}
the problem of proving \rf{bEst} reduces to that of proving
\begin{align}
\label{JIIneq}
I \geq Ck^\beta J, \qquad k\geq k_0,
\end{align}
for some $C$ depending only on $k_0$. %
However, proving \rf{JIIneq} is complicated by the fact that the factor $|Z(\bxi)|$ in $I$ vanishes when $|\bxi|=k$. To deal with this, we split the integrals $I$ and $J$ into
\begin{align*}
\label{}
I&= I_1+I_2+I_3+I_4, \qquad\qquad
J= J_1+J_2+J_3+J_4,
\end{align*}
using the decomposition
\begin{align*}
\label{}
\int_{\R^{n-1}} = \int_{0<|\bxi|< k-\eps} + \int_{k-\eps<|\bxi|< k} + \int_{k<|\bxi|<k+\eps} + \int_{|\bxi|>k+\eps}
\end{align*}
(where $0<\eps\leq k$ is to be specified later), and proceed to estimate the integrals $J_1$ to $J_4$ separately.

We first estimate
\begin{align*}
J_1 :&= \int_{0<|\bxi|<k-\eps} (k^2+|\bxi|^2)^{1/2}|\widehat{\phi}(\bxi)|^2\,\rd \bxi
= \int_{0<|\bxi|<k-\eps} \frac{(k^2+|\bxi|^2)^{1/2}}{ (k^2-|\bxi|^2)^{1/2}} (k^2-|\bxi|^2)^{1/2}|\widehat{\phi}(\bxi)|^2\,\rd \bxi \leq C_1 I_1,
\end{align*}
where
\begin{align}
\label{C1Def}
C_1:= \frac{\sqrt{2}k}{(\eps(2k-\eps))^{1/2}},
\end{align}
and we have used the fact that, for $0<|\bxi|<k-\eps$,
\begin{align*}
\label{}
\frac{k^2+|\bxi|^2}{k^2-|\bxi|^2}
\leq\frac{k^2+(k-\eps)^2}{k^2-(k-\eps)^2}
\leq C_1^2.
\end{align*}
Similarly,
\begin{align*}
J_4 :&= \int_{|\bxi|>k+\eps}(k^2+|\bxi|^2)^{1/2}|\widehat{\phi}(\bxi)|^2\,\rd \bxi
= \int_{|\bxi|>k+\eps} \frac{(k^2+|\bxi|^2)^{1/2}}{ (|\bxi|^2-k^2)^{1/2}} (|\bxi|^2-k^2)^{1/2}|\widehat{\phi}(\bxi)|^2\,\rd \bxi \leq C_2 I_4,
\end{align*}
where
\begin{align}
\label{C2Def}
C_2:= \frac{(k^2+(k+\eps)^2)^{1/2}}{(\eps(2k+\eps))^{1/2}},
\end{align}
and we have used the fact that, for $|\bxi|>k+\eps$,
\begin{align*}
\label{}
\frac{k^2+|\bxi|^2}{|\bxi|^2-k^2}
= \frac{\frac{k^2}{|\bxi|^2}+1}{1-\frac{k^2}{|\bxi|^2}}
\leq\frac{\frac{k^2}{(k+\eps)^2}+1}{1-\frac{k^2}{(k+\eps)^2}}
= \frac{k^2+(k+\eps)^2}{(k+\eps)^2-k^2}=C_2^2.
\end{align*}
To estimate $J_2$ and $J_3$, we first derive a pointwise estimate on the Fourier transform of $\phi$. To do this, we note that, for $\bxi_1,\bxi_2\in \R^{n-1}$,
\begin{align*}
\label{}
|\widehat{\phi}(\bxi_1)-\widehat{\phi}(\bxi_2)| &\leq \frac{1}{(2\pi)^{\frac{n-1}{2}}}\left|\int_{ \Gamma}\re^{-\ri \bxi_2\cdot \bx }\left(\re^{-\ri (\bxi_1-\bxi_2)\cdot \bx  }- 1 \right)\phi(\bx )\,\rd \bx  \right|\nonumber \\
&\leq \frac{|\bxi_1-\bxi_2|}{(2\pi)^{\frac{n-1}{2}}}\int_{ \Gamma}|\bx ||\phi(\bx )|\,\rd \bx  \nonumber \\
&\leq \frac{|\bxi_1-\bxi_2|}{(2\pi)^{\frac{n-1}{2}}}\left(\int_{ \Gamma}|\bx |^2\,\rd \bx \right)^{1/2} \left(\int_{ \Gamma}|\phi(\bx )|^2\,\rd \bx \right)^{1/2}.  \nonumber \\
&=\tilde{c}_n |\bxi_1-\bxi_2|\left(\int_{\R^{n-1}}|\widehat{\phi}(\bxi)|^2\,\rd \bxi\right)^{1/2},  \nonumber \\
&\leq \tilde{c}_n|\bxi_1-\bxi_2|k^{-1/2}J,
\end{align*}
where $\tilde{c}_n>0$ represents a constant depending only on $n$, whose value may change from occurrence to occurrence. Here we have used our assumption that $\diam{\Gamma}=1$, and the fact that
\begin{align*}
\label{}
|\re^{\ri t}-1|^2 = (\cos{t}-1)^2 + \sin^2{t} = 2(1-\cos{t}) = 4\sin^2{(t/2)}\leq t^2, \quad t\in\R.
\end{align*}
As a result, we can estimate, with $\hat{\bxi}:=\bxi/|\bxi|$,
\begin{align}
\label{FTPointwise}
|\widehat{\phi}(\bxi)|^2 \leq 2\left (|\widehat{\phi}(\bxi\pm\eps \hat{\bxi})|^2 + |\widehat{\phi}(\bxi\pm\eps \hat{\bxi}) -
\widehat{\phi}(\bxi)|^2\right) \leq 2\left (|\widehat{\phi}(\bxi\pm\eps \hat{\bxi})|^2 + \frac{\eps^2 \tilde{c}_n J}{k}\right), %
\end{align}
which %
then implies that
\begin{align}
\label{J2Est}
J_2:&= \int_{k-\eps<|\bxi|<k} (k^2+|\bxi|^2)^{1/2}|\widehat{\phi}(\bxi)|^2\,\rd \bxi %
 \leq 2\sqrt{2}k\left(\int_{k-\eps<|\bxi|<k} |\widehat{\phi}(\bxi-\eps \hat{\bxi})|^2 \,\rd \bxi + \frac{\eps^3 a_n\tilde{c}_n J}{k} \right),
\end{align}
where $a_n:=k-\eps/2$ if $n=3$ and $a_n:=1$ if $n=2$.
We now note that, for any $0<\eps<c<d$,
\begin{align}
\label{fxipluseps}
\int_{c<|\bxi|<d} f(\bxi \pm \eps \hat{\bxi}) \,\rd \bxi = \begin{cases}
\int_{c\pm \eps<|\bxi|<d\pm \eps} f(\bxi)\left(1\mp \frac{\eps}{|\bxi|}\right) \,\rd \bxi,& n=3,\\
\int_{c\pm \eps<|\bxi|<d\pm \eps} f(\bxi) \,\rd \bxi,& n=2.
\end{cases}
\end{align}
Assume $0<\eps<k/3$. Then for $k-2\eps<|\bxi|<k-\eps$, we have $1+\eps/|\bxi|\leq 2$, so that, using \rf{fxipluseps},
\begin{align*}
\label{}
\int_{k-\eps<|\bxi|<k} |\widehat{\phi}(\bxi-\eps \hat{\bxi})|^2 \,\rd \bxi &\leq 2 \int_{k-2\eps<|\bxi|<k-\eps} |\widehat{\phi}(\bxi )|^2 \,\rd \bxi\nonumber\\
&\leq \frac{2}{(k^2-(k-\eps)^2)^{1/2}}\int_{k-2\eps<|\bxi|<k-\eps} (k^2-|\bxi|^2)^{1/2}|\widehat{\phi}(\bxi )|^2 \,\rd \bxi\nonumber\\
&\leq \frac{2}{(\eps(2k-\eps))^{1/2}}I_1,
\end{align*}
and, inserting this estimate into \rf{J2Est}, we find that
\begin{align}
\label{J2Est2}
J_2 \leq 4C_1 I_1 + \eps^3 a_n \tilde{c}_n J,
\end{align}
where $C_1$ is defined as in \rf{C1Def}.

Applying a similar procedure, but using \rf{FTPointwise} and \rf{fxipluseps} with the plus rather than the minus sign, and again assuming that $0<\eps<k/3$, gives
\footnote{The factor of $2$ in front of $C_2I_4$ here, rather than $4$, as in \rf{J2Est}, is not a typographical error - it arises because of the change of sign in \rf{fxipluseps}.}
\begin{align*}
\label{}
J_3 :&= \int_{k<|\bxi|<k+\eps} (k^2+|\bxi|^2)^{1/2}|\widehat{\phi}(\bxi)|^2\,\rd \bxi\leq 2C_2 I_4 + \eps^3 b_n \tilde{c}_n J,
\end{align*}
where $C_2$ is defined as in \rf{C2Def} and
$b_n:=k+\eps/2$ if $n=3$ and $b_n:=1$ if $n=2$.

Since $a_n+b_n=2k^{n-2}$,
we can summarize by saying that, 
for $0<\eps<k/3$,%
\begin{align*}
J = J_1+J_2+J_3+J_4 & \leq 5C_1I_1 + 3C_2I_4 +  \eps^3 k^{n-2}\tilde{c}_n J,
\end{align*}
which implies that
\begin{align}
\label{JEst2}
J(1-C_4) \leq C_3I,
\end{align}
where
\begin{align*}
\label{}
C_3&:=\max\{5C_1,3C_2\},\qquad \qquad
C_4:=\eps^3 k^{n-2}\tilde{c}_n,
\end{align*}
for some constant $\tilde{c}_n>0$ depending only on $n$. 
Now, if we choose $\eps$ small enough to make $C_4\leq 1/2$, then \rf{JEst2} will imply \rf{JIIneq}, once we have determined the $k$-dependence of $C_3$. 
Explicitly, 
setting $\eps=c_\eps k^{p_\eps}$, 
we can ensure that $C_4\leq 1/2$, uniformly in $k$, by choosing
$p_\eps=(-1/3)^{n-2}$
and taking $c_\eps$ sufficiently small. Furthermore, in order to satisfy the requirement that $0<\eps<k/3$, we restrict our attention to $k\geq k_0>0$ and choose $c_\eps$ depending on $k_0$. 
We then have that%
\begin{align*}
\label{}
C_3\leq \frac{\tilde{c}_n}{\sqrt{c_\eps}} k^{1/2-p_{\eps}/2},%
\end{align*}
which, recalling \rf{JEst2}, gives \rf{JIIneq} (and hence \rf{bEst}) with $\beta = -1/2+p_{\eps}/2$.
\end{proof}

\begin{rem}
\label{TRem}
We can show that the bounds established in Theorem \ref{ThmTNormSmooth} are sharp in their dependence on $k$ as $k\to\infty$, at least for the case $s=1/2$. As before, for simplicity of presentation we assume that $\diam{\Gamma}=1$ and $k>1$. Let $0\neq \phi\in \scrD(\Gamma)$ be independent of $k$. Then, by \rf{bI},%
\begin{align}
\label{bEstRem}
|b(\phi,\phi)| \geq \frac{1}{2\sqrt{2}} \int_{\R^{n-1}}\sqrt{|k^2-|\bxi|^2|}|\widehat{\phi}(\bxi)|^2 \,\rd \bxi
& \geq  \frac{\sqrt{3}k}{4\sqrt{2}} \int_{|\bxi|\leq k/2}\,|\widehat{\phi}(\bxi)|^2 \,\rd \bxi.
\end{align}
Also, 
\begin{align}
\label{bEstRem2}
 \norm{\phi}{\tilde{H}^{1/2}_k(\Gamma)}^2 %
\leq  \frac{3k}{2} \int_{|\bxi|\leq k/2} |\widehat{\phi}(\bxi)|^2\,\rd \bxi + 3\int_{|\bxi|> k/2} |\bxi||\widehat{\phi}(\bxi)|^2\,\rd \bxi
\leq  2k \int_{|\bxi|\leq k/2} |\widehat{\phi}(\bxi)|^2\,\rd \bxi ,
\end{align}
for $k$ sufficiently large.
Combining \rf{bEstRem} and \rf{bEstRem2} gives $|b(\phi,\phi)| \geq \sqrt{3}/(8\sqrt{2})\norm{\phi}{\tilde{H}^{1/2}_k(\Gamma)}^2$, and then, since $|b(\phi,\phi)| \leq  \norm{T_k\phi}{H^{-1/2}_k(\Gamma)} \norm{\phi}{\tilde{H}^{1/2}_k(\Gamma)}$,
we conclude that%
\begin{align*}
\label{}
 \norm{T_k\phi}{H^{-1/2}_k(\Gamma)}  \geq  \sqrt{3}/(8\sqrt{2})  \norm{\phi}{\tilde{H}^{1/2}_k(\Gamma)},
\end{align*}
for $k$ sufficiently large, which demonstrates the sharpness of \rf{TEst} in the limit $k\to\infty$.
\end{rem}

\section{Norm estimates in $H^{1/2}(\Gamma)$}
\label{sec:NormEstimates}
In this section we derive $k$-explicit estimates of the norms of certain functions in $H^{1/2}(\Gamma)$, which are of relevance to the numerical solution of the boundary value problem $\sD''$ for scattering by a sound-soft screen, when it is solved via the single-layer integral equation formulation \rf{BIE_sl}; for an application of the results presented here see e.g.\ \cite{ScreenBEM}. Suppose that we are attempting to solve this integral equation using a Galerkin BEM, and that we have (by some means) obtained an error estimate in $\tilde{H}^{-1/2}(\Gamma)$ for the Galerkin solution. In order to derive error estimates for the resulting solution in the domain $D$, and the far-field pattern, we need to estimate integrals (strictly speaking, duality pairings - see below) of the form
\begin{align}
\label{IDef}
I:=\int_\Gamma w(\by) v(\by)\,\rd s(\by),
\end{align}
where $v\in \tilde{H}^{-1/2}(\Gamma)$ represents the error in our Galerkin solution, and $w\in H^{1/2}(\Gamma)$ is a known function, possibly depending on a parameter. For example, in the case of the far-field pattern we would have $w(\by)= \re^{\ri k\hat{\bx}\cdot \by}$ for some observation direction $\hat{\bx}\in\R^n$ with $|\hat{\bx}|= 1$. In the case of the solution in the domain, we would have $w(\by)= \Phi(\bx,\by)$ for some observation point $\bx\in D$. 
One also needs to estimate integrals of the form \rf{IDef} when attempting to estimate the magnitude of the solution of the continuous problem at a point $\bx$ in the domain, using a bound on the boundary data (cf.\ Corollary \ref{cor:BoundOnSolnInDomain}). In this case $w(\by)=\Phi(\bx,\by)$ and $v=[\pdonetext{u}{\bn}]$ is the exact solution of \rf{BIE_sl}. 

Of course the integral \rf{IDef} should be interpreted as the duality pairing $| \langle w,\overline{v} \rangle_{H^{1/2}(\Gamma)\times \tilde{H}^{-1/2}(\Gamma)}|$, and
\begin{align*}
\label{}
|I| =| \langle w,\overline{v} \rangle_{H^{1/2}(\Gamma)\times \tilde{H}^{-1/2}(\Gamma)}| \leq  \normt{w}{H^{1/2}_k(\Gamma)}\normt{v}{\tilde{H}^{-1/2}_k(\Gamma)}.
\end{align*}
We are assuming that we have an estimate of the quantity $\normt{v}{\tilde{H}^{-1/2}_k(\Gamma)}$. It therefore remains to bound $\normt{w}{H^{1/2}_k(\Gamma)}$.

\begin{lem}
\label{lem:H_one_half_norms}
Let $k>0$, let $\Gamma$ be an arbitrary nonempty relatively open subset of $\Gamma_\infty$, and let $L:=\diam{\Gamma}$. 
\begin{enumerate}[(i)]
\item Let $\bd\in\R^n$ with $||\bd||\leq 1$. Then there exists $C>0$, independent of $k$ and $\Gamma$, such that 
\begin{align}
 \label{eqn:H_one_half_planewave}
 \normt{\re^{\ri k \bd\cdot (\cdot)}}{H^{1/2}_k(\Gamma)} \leq CL^{(n-2)/2}(1+\sqrt{kL}).
 \end{align} 
\item Let $\bx\in D:=\R^n\setminus\overline{\Gamma}$. Then there exists $C>0$, independent of $k$ and $\Gamma$, such that
 \begin{align}
 \label{eqn:H_one_half_Phi}
 \norm{\Phi_k(\bx,\cdot)}{H^{1/2}_k(\Gamma)} \leq 
\begin{cases}
 Ck^{1/2}\left(1+ \frac{1}{\sqrt{kL}}\right)\left(1+ \frac{1}{(kd)^{3/2}}\right)\left(1+kL\right), & n=3,\\
 C\left(1+\frac{1}{\sqrt{kL}}\right)\left(1+\frac{1}{\sqrt{kd}}\right)\log\left(2+\frac{1}{kd}\right)\log^{1/2}(2+kL), & n=2,
\end{cases}
 \end{align} 
 where $d:=\dist(\bx,\Gamma)$.
\end{enumerate}
\end{lem}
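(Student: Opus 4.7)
The strategy in both parts is to exhibit an explicit extension $W\in H^1(\R^{n-1})$ of $w$ (identifying $\Gamma$ with $\tilde\Gamma\subset\R^{n-1}$ as in \S\ref{subsec:FuncSpacesTrace}) and use the definition of $\|\cdot\|_{H^{1/2}_k(\Gamma)}$ as the infimum of $\|W\|_{H^{1/2}_k(\R^{n-1})}$ over all such extensions. The main tool is the interpolation inequality
\[
\|W\|_{H^{1/2}_k(\R^{n-1})}^2 \;\leq\; \|W\|_{L^2(\R^{n-1})}\,\|W\|_{H^1_k(\R^{n-1})},
\]
which follows from Cauchy--Schwarz applied to the weight $(k^2+|\bxi|^2)^{1/2}=1\cdot(k^2+|\bxi|^2)^{1/2}$ inside the Fourier representation \eqref{eq:HsProdNorm}, together with the Parseval identity $\|W\|_{H^1_k(\R^{n-1})}^2=k^2\|W\|_{L^2(\R^{n-1})}^2+\|\nabla W\|_{L^2(\R^{n-1})}^2$. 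Each part thus reduces to bounding the $L^2$ and gradient $L^2$ norms of a concrete $W$.

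For (i), with $\bd=(\tilde\bd,d_n)$ so that $|\tilde\bd|\leq 1$ and $w(\tilde\by)=\re^{\ri k\tilde\bd\cdot\tilde\by}$ on $\tilde\Gamma$, I would fix a reference cutoff $\chi_0\in\scrD(\R^{n-1})$ with $\chi_0=1$ on $B(0,1)$ and $\supp\chi_0\subset B(0,2)$, pick any $\tilde\by_0\in\tilde\Gamma$, and set $W(\tilde\by):=\chi_0((\tilde\by-\tilde\by_0)/L)\,\re^{\ri k\tilde\bd\cdot\tilde\by}$. Since $\diam\tilde\Gamma=L$, $W$ extends $w$; a direct scaling computation gives $\|W\|_{L^2}^2\lesssim L^{n-1}$ and $\|\nabla W\|_{L^2}^2\lesssim L^{n-3}+k^2L^{n-1}=L^{n-3}(1+(kL)^2)$, whence combining with the interpolation inequality and using $\sqrt{1+x}\leq 1+\sqrt{x}$ yields \eqref{eqn:H_one_half_planewave}.

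For (ii), let $r:=|\tilde\by-\tilde\bx|$ and $\rho:=\sqrt{r^2+x_n^2}$, and observe from $\dist(\bx,\Gamma)^2\leq\dist(\tilde\bx,\tilde\Gamma)^2+x_n^2$ that either $|x_n|\geq d/\sqrt{2}$ (in which case $\rho\geq d/\sqrt{2}$ throughout $\R^{n-1}$) or $\dist(\tilde\bx,\tilde\Gamma)\geq d/\sqrt{2}$ (in which case the ball $B(\tilde\bx,d/2)$ is disjoint from $\tilde\Gamma$). I would then choose $\chi\in\scrD(\R^{n-1})$ with $\chi=1$ on $\tilde\Gamma$, $\supp\chi\subset B(\tilde\by_0,2L)$, additionally requiring $\supp\chi\cap B(\tilde\bx,d/2)=\emptyset$ in the second sub-case; in either sub-case $\rho\geq cd$ on $\supp\chi$ for an absolute $c>0$, and $|\nabla\chi|\lesssim 1/L+\mathbf{1}_{\{r\leq d\}}/d$. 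Setting $W:=\chi\cdot\Phi(\bx,\cdot)|_{\Gamma_\infty}\in\scrD(\R^{n-1})$, the norms $\|W\|_{L^2}^2$ and $\|\nabla W\|_{L^2}^2$ are estimated by inserting the pointwise bounds
\[
|\Phi_c|\lesssim \begin{cases}1/\rho, & n=3,\\ \log(2+1/(k\rho))+1/\sqrt{k\rho}, & n=2,\end{cases}\qquad
|\nabla\Phi_c|\lesssim \begin{cases}1/\rho^2+k/\rho, & n=3,\\ 1/\rho+\sqrt{k/\rho}, & n=2,\end{cases}
\]
(the bounds for $n=2$ coming from the Hankel estimates \eqref{H0Est}--\eqref{BnEst}) and integrating in polar coordinates around $\tilde\bx$ over $\supp\chi$, splitting the radial range at $\min(1/k,L)$ when $n=2$ and handling the region $\{r\lesssim d\}$ where $\nabla\chi$ is large separately from the bulk. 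Multiplying the resulting $\|W\|_{L^2}$ and $\|W\|_{H^1_k}$ bounds gives \eqref{eqn:H_one_half_Phi} after taking a square root.

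The main obstacle is the bookkeeping in the $n=2$ case: each of the factors $(1+1/\sqrt{kL})$, $(1+1/\sqrt{kd})$, $\log(2+1/(kd))$ and $\log^{1/2}(2+kL)$ must be traced back to a specific piece of the integrals arising from the logarithmic singularity of $H_0^{(1)}$ at $0$ and its oscillatory $1/\sqrt{k\rho}$ decay at $\infty$, and the cross term $(\nabla\chi)\Phi_c$ (supported near $\partial B(\tilde\bx,d/2)$ and near $\partial B(\tilde\by_0,2L)$) has to be balanced against the bulk term $\chi\nabla\Phi_c$ without creating spurious powers of $k$, $L$ or $d$. No new ideas are needed beyond this case analysis; once the one-dimensional radial integrals are carried out and combined via the interpolation inequality, the estimates \eqref{eqn:H_one_half_planewave} and \eqref{eqn:H_one_half_Phi} fall out.
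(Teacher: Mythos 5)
Your proposal is correct and essentially the paper's argument. Both proofs extend $w$ by multiplying by a smooth cut-off supported near $\Gamma$, reduce the $H^{1/2}_k$ estimate to bounds on $\|W\|_{L^2}$ and $\|\nabla W\|_{L^2}$, and estimate those $L^2$ integrals by the same pointwise Hankel bounds, the same cut-off geometry (a transition layer of width $\sim L$ at the outer edge and width $\sim d$ around the singularity, with $|\nabla\chi|\lesssim 1/L$ and $\lesssim 1/d$ respectively), and the same radial splittings.

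The only differences are cosmetic. For part (i) the paper bypasses the interpolation step and computes $\widehat{\chi_L\tilde u}(\bxi)=L^{n-1}\hat\chi(L(\bxi-k\bd))$ directly, reading off the $H^{1/2}_k$ norm from a fixed integral of $\hat\chi$; your route via $\|W\|_{L^2}$, $\|\nabla W\|_{L^2}$ and interpolation reaches the same bound. For part (ii) the paper uses the cruder inequality $\|W\|_{H^{1/2}_k}\leq k^{-1/2}\|W\|_{H^1_k}$ (valid since $k\leq(k^2+|\bxi|^2)^{1/2}$), whereas you use the sharper geometric-mean interpolation $\|W\|_{H^{1/2}_k}^2\leq\|W\|_{L^2}\|W\|_{H^1_k}$; the latter implies the former, so your bound is (marginally) tighter, but both yield \eqref{eqn:H_one_half_Phi} after the same $L^2$ computations. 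The paper also organises the cut-off geometry a little differently — it sets $\eps=\min\{1,d/(2L)\}$ and removes the set $B_d=\{s:r(s)<d\}$ directly (using that $\tilde\Gamma$ automatically avoids it because $d=\dist(\bx,\Gamma)$), whereas you split into the two sub-cases $|x_n|\geq d/\sqrt2$ vs.\ $\dist(\tilde\bx,\tilde\Gamma)\geq d/\sqrt2$; both give the needed lower bound $\rho\gtrsim d$ on $\supp\chi$ and the needed control of $|\nabla\chi|$. The bookkeeping you flag in the $n=2$ case is exactly what the paper carries out term-by-term, so no gap.
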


\begin{proof}
In both parts (i) and (ii) we wish to estimate $\normt{u}{H^{1/2}(\Gamma)}$, where $u\in H^{1/2}(\Gamma)$ is such that there exists a closed set $F\subset \R^{n-1}$ containing $\tGamma$, and having the same diameter as $\Gamma$, and an open set $K$ containing a neighbourhood of $F$ (e.g. $K=F_\delta:=\{\by\in\R^{n-1}:\dist(\by,F)<\delta\}$ for some $\delta>0$), such that $u=\tilde{u}|_\Gamma$ for some $\tilde{u}\in L^1_{\rm loc}(\R^{n-1})$ with $\tilde{u}|_K\in \scrD(\overline K)$.

Let $F':=\{\by/L:\by\in F\}$ be a scaled version of $F$ with $\diam(F')=1$. Then given any $\eps>0$ there exists (e.g.\ by McLean \cite[Thm 3.6]{McLean}) a cut-off function $\chi\in \scrD(\R^{n-1})$ such that $0\leq\chi(\bx)\leq 1$ for all $\bx\in\R^{n-1}$ with $\chi(\bx)=1$ if $\bx\in F'$ and $\chi(\bx)=0$ if $\bx\in\R^{n-1}\setminus F'_{\eps}$, and $|\pdonetext{\chi}{x_j}(\bx)|\leq C/\eps$ for $j=1,\ldots,n-1$ and $\bx\in F'_{\eps} \setminus F'$. Define $\chi_L\in \scrD(\R^{n-1})$ by $\chi_L(\bx):=\chi(\bx/L)$. Then $0\leq\chi_L(\bx)\leq 1$ for all $\bx\in\R^{n-1}$ with $\chi_L(\bx)=1$ if $\bx\in F$ and $\chi_L(\bx)=0$ if $\bx\in\R^{n-1}\setminus F_{L\eps}$, and $|\pdonetext{\chi_L}{x_j}(\bx)|\leq C/(L\eps)$ for $j=1,\ldots,n-1$ and $\bx\in F_{L\eps} \setminus F$. 
Then $(\chi_L \tilde{u})|_\Gamma=u$, so that, by the definition of $\normt{\cdot}{H^{1/2}_k(\Gamma)}$,
\begin{align}
\label{uChi}
\normt{u}{H^{1/2}_k(\Gamma)} \leq \normt{\chi_L \tilde{u}}{H^{1/2}_k(\R^{n-1})}.
\end{align}
We now estimate the right hand side of \rf{uChi} in the specific cases relevant for (i) and (ii). 
Throughout the proof, $C$ will denote a positive constant, independent of both $k$ and $\Gamma$.

For part (i) we have $\tilde{u}(\by)= \re^{\ri k\bd\cdot \by}$, $|\bd|\leq 1$. 
In this case $\tilde{u}(\by)$ has no singularities and we can take $F$ to be the smallest closed interval (for $n=2$) or closed ball (for $n=3$) containing $\tGamma$. Moreover, the standard shift and scaling theorems for the Fourier transform imply that $\widehat{\chi_L \tilde{u}}(\bxi) = L^{n-1}\widehat{\chi}(L(\bxi-k\bd))$, and hence the right-hand side of \rf{uChi} can be bounded directly in terms of the $H^{1/2}_k$ norm of $\chi$, which can be assumed to be a constant independent of $k$ and $\Gamma$ (as we have no singularities in $\tilde{u}(y)$ and therefore no restriction on the choice of $\eps$ in this case). Explicitly, 
\begin{align*}
\label{}
\normt{\chi_L \tilde{u}}{H^{1/2}_k(\R^{n-1})}^2 &= L^{2(n-1)}\int_{\R^{n-1}} (k^2+|\bxi|^2)^{1/2}|\widehat{\chi}(L(\bxi-k\bd))|^2 \, \rd\bxi\notag\\
&= L^{n-1}\int_{\R^{n-1}} (k^2+|\boldeta/L+k\bd|^2)^{1/2}|\widehat{\chi}(\boldeta)|^2 \, \rd\boldeta\notag\\
&\leq L^{n-2}(1+kL)\int_{\R^{n-1}} (2+|\boldeta|)|\widehat{\chi}(\boldeta)|^2 \, \rd\boldeta\notag,
\end{align*}
after the change of variable $\boldeta=L(\bxi-k\bd)$. 
Using this in \rf{uChi} gives the desired result \rf{eqn:H_one_half_planewave}, 
where $C>0$ is a constant independent of $k$, and proportional to $\sqrt{\int_{\R^{n-1}} (2+|\boldeta|)|\widehat{\chi}(\boldeta)|^2 \, \rd\boldeta}$. Taking e.g.\ $\eps=1$ in the definition of $\chi$ we see that $C$ is also independent of $\Gamma$.

For part (ii), where $\tilde{u}(\by)= \Phi_k(\bx,\by)$, $\bx\in D$,  
such a direct approach is not possible. However, since $\chi_L\tilde{u}\in H^1(\R^{n-1})$ we can estimate $\normt{\chi_L \tilde{u}}{H^{1/2}_k(\R^{n-1})}$ using the fact that $H^{1}(\R^{n-1})$ can be continuously embedded in $H^{1/2}(\R^{n-1})$. 

Explicitly, when $n=2$ we have
\begin{align}
\label{uChiEst}
\normt{\chi_L \tilde{u}}{H^{1/2}_k(\R)} \leq k^{-1/2}\normt{\chi_L \tilde{u}}{H^{1}_k(\R)}%
& \leq k^{1/2}\normt{\chi_L \tilde{u}}{L^2(\R)}+  k^{-1/2} \left(  \normt{\chi_L' \tilde{u}}{L^2(\R)} + \normt{\chi_L \tilde{u}'}{L^2(\R)}\right),
\end{align}
where $f'$ represents the derivative of $f$. %
In this case, with $\by=(s,0)$, we have $\tilde{u}(\by)\equiv\tilde{u}(s)=(\ri/4)H_0^{(1)}(kr(s))$, where $r(s)=\sqrt{(x_1-s)^2+x_2^2}$. Then $\tilde{u}'(s)=(-\ri k/4)(s-x_1)/r(s) H_1^{(1)}(k r(s))$. To avoid the singularity at $\by=\bx$ we define $d:=\dist(\bx, \Gamma)$ and take our closed set $F:=I_\Gamma\setminus B_d$, where $I_\Gamma$ is the smallest closed interval containing $\Gamma$ and $B_d=\{s\in \R : r(s)<d \}$. We set $\eps=\min\{1,d/(2L)\}$, and note that ${\rm meas}(F_{L\eps})\leq L+2\eps$ and ${\rm meas}(F_{L\eps}\setminus F)\leq 4L\eps$. 
We now estimate each of the three terms on the right hand side of \rf{uChiEst} in turn. 
In doing so we make extensive use of the bounds on the Hankel functions $H_0^{(1)}$ and $H_1^{(1)}$ presented in \rf{H0Est}-\rf{BnEst}.
\begin{itemize}
\item
For the term involving $\normt{\chi_L \tilde{u}}{L^2(\R)}$, we have, using the fact that $|H_0^{(1)}(z)|$ is a decreasing function of $z$, and arguing as in the proof of \cite[Lemma 4.1]{DHSLMM},
\begin{align*}
\label{}
\normt{\chi_L \tilde{u}}{L^2(\R)}^2 \leq \frac{1}{16}\int_{F_{L\eps}} |H_0^{(1)}(k r(s))|^2 \,\rd s 
\leq \frac{1}{8}\int_0^{L(1/2+\eps)} |H_0^{(1)}(ks)|^2 \,\rd s 
\leq \frac{C}{k}\log{(2+kL(1/2+\eps))},
\end{align*}
and since $\eps\leq 1$ we get
\begin{align}
\label{n2Est1}
k^{1/2}\normt{\chi_L \tilde{u}}{L^2(\R)}\leq C\log^{1/2}{(2+kL)}.
\end{align}

\item
For the term involving $\normt{\chi_L' \tilde{u}}{L^2(\R)}$ we estimate
\begin{align*}
\label{}
\normt{\chi_L' \tilde{u}}{L^2(\R)}^2 
&= \int_{F_{L\eps}\setminus F} |\chi_L' \tilde{u}|^2
\leq \frac{C}{L\eps}\sup_{s\in F_{L\eps}\setminus F} |H_0^{(1)}(kr(s))|^2
\leq \frac{C}{L\eps}|H_0^{(1)}(kd/2)|^2,
\end{align*}
since the definition of $\eps=\min\{1,d/(2L)\}$ ensures that $\eps\leq d/(2L)$, which in turn ensures that $r(s)\geq d/2$ for $s\in F_{L\eps}$. We can estimate $|H_0^{(1)}(kd/2)|\leq C\log{(2+1/(kd))}$, and also
\begin{align}
\label{FracEst}
\frac{1}{kL\eps}\leq 2\left(\frac{1}{kL} + \frac{1}{kd}\right) \leq 2\left(1+\frac{1}{kL}\right)\left(1+ \frac{1}{kd}\right),
\end{align}
so that 
\begin{align}
\label{n2Est2}
k^{-1/2}\normt{\chi_L' \tilde{u}}{L^2(\R)} 
\leq C\left(1+\frac{1}{\sqrt{kL}}\right)\left(1+ \frac{1}{\sqrt{kd}}\right)\log{\left(2+\frac{1}{kd}\right)}.
\end{align}
\item 
For the term involving $\normt{\chi_L \tilde{u}'}{L^2(\R)}$ 
we first note that $|\tilde{u}'(s)|\leq (k/4)|H_1^{(1)}(kr(s))|$. The fact that $|H_1^{(1)}(z)|$ is decreasing implies that\footnote{This estimate is `worse than the worse case': it puts the source a distance $d/2$ above the midpoint of the set $F_{L\eps}$.}
\begin{align*}
\label{}
\normt{\chi_L \tilde{u}'}{L^2(\R)}^2 
\leq \int_{F_{L\eps}}|\tilde{u}'|^2
&\leq 2Ck^2\int_0^{L(1/2+\eps)} |H_1^{(1)}(kR(s))|^2 \,\rd s
=Ck I,
\end{align*}
where $R(s)=\sqrt{s^2+(d/2)^2}$ and $I=\int_0^l |H_1^{(1)}(\sqrt{z^2+\tilde{d}^2})|^2\,\rd z$ with $l=kL(1/2+\eps)$ and $\tilde{d}=kd/2$. To estimate $I$ we note that if $\tilde{d}>1$ then $\sqrt{z^2+\tilde{d}^2}\geq 1$ and so
\begin{align*}
\label{}
I\leq C\int_0^l \frac{\rd z}{\sqrt{z^2+1}}=C\sinh^{-1}{l} \leq C\log{(2+l)}.
\end{align*}
If $\tilde{d}\leq 1$ then we split the integral $I=I_1+I_2$ where $I_1=\int_0^{\min\{1,l\}}$ and $I_2=\int_{\min\{1,l\}}^l$. Then
\begin{align*}
\label{}
I_1\leq C\int_0^1 \frac{\rd z}{z^2+\tilde{d}^2} =\frac{C}{\tilde{d}}\tan^{-1}{\tilde{d}} \leq \frac{C}{\tilde{d}},
\end{align*}
and if $l>1$ then 
\begin{align*}
\label{}
I_2\leq C\int_1^l \frac{\rd z}{\sqrt{z^2+\tilde{d}^2}} \leq C\int_1^l \frac{\rd z}{z} = C\log{l}\leq C\log{2+l}.
\end{align*}
Collecting all these results gives that
\begin{align*}
\label{}
I\leq C\left(1+ \frac{1}{kd}\right) \log{\left(2+kL\right)},
\end{align*}
and hence that
\begin{align}
\label{n2Est3}
k^{1/2}\normt{\chi_L \tilde{u}'}{L^2(\R)} 
\leq C\left(1+ \frac{1}{\sqrt{kd}}\right)\log^{1/2}{\left(2+kL\right)}.
\end{align}
\end{itemize}

Finally, using \rf{n2Est1}, \rf{n2Est2} and \rf{n2Est3} in \rf{uChiEst} gives the required estimate \rf{eqn:H_one_half_Phi}.
When $n=3$ we have, in place of \rf{uChiEst},
\begin{align}
\label{uChiEst3D}
\normt{\chi_L \tilde{u}}{H^{1/2}_k(\R^2)} \leq k^{-1/2}\normt{\chi_L \tilde{u}}{H^{1}_k(\R^2)}
&\leq k^{1/2} \normt{\chi_L \tilde{u}}{L^2(\R^2)}
+  k^{-1/2} \sum_{j=1}^2\left(\norm{\pdone{\chi_L}{y_j}\tilde{u}}{L^2(\R^2)} + \norm{\chi_L \pdone{\tilde{u}}{y_j}}{L^2(\R)}\right).
\end{align}
Now, with $\by=(\tby,0)$ and $\tby=(y_1,y_2)\in R^2$, we have $\tilde{u}(\by)\equiv \tilde{u}(\tby)=\re^{\ri k r(\tby)}/(4\pi r(\tby))$, where $r(\tby)=\sqrt{(x_1-y_1)^2+(x_2-y_2)^2+x_3^2}$. Then $(\pdonetext{\tilde{u}}{y_j})(\tby)=(y_j-x_j)(\ri kr(\tby) -1)\re^{\ri k r(\tby)}/(4\pi r(\tby)^3)$. To avoid the singularity at $\by=\bx$ we define $d:=\dist(\bx, \Gamma)$ and take our closed set $F=B_\Gamma\setminus B_d$, where $B_\Gamma$ is the smallest closed ball containing $\Gamma$ and $B_d=\{\tby\in \R^2 : r(\tby)<d \}$. Again we set $\eps=\min\{1,d/(2L)\}$, and note that in this case ${\rm meas}(F_{L\eps})\leq \pi L^2(1/2+\eps)^2$ and ${\rm meas}(F_{L\eps}\setminus F)\leq \pi L \eps (L+2 d)$. \footnote{This holds because the definition of $\eps$ ensures that $L\eps \leq d$.} 
\begin{itemize}
\item
For the term involving $\normt{\chi_L \tilde{u}}{L^2(\R^2)}$, 
since $|\tilde{u}|\leq 1/(4\pi r(\tby))|$, we can estimate
\footnote{This estimate is `worse than the worse case': it puts the source a distance $d/2$ above the midpoint of the set $F_{L\eps}$.}
\begin{align*}
\label{}
\normt{\chi_L \tilde{u}}{L^2(\R^2)}^2 
&\leq C\int_0^{L(1/2+\eps)} \frac{r\rd r }{r^2 + (d/2)^2}\\
&= C \log{(1+(L(1+2\eps)/d)^2)}
\leq C \log{(2+L/d)}
\leq C \log{(2+1/(kd))} \log{(2+kL)},
\end{align*}
since $\eps\leq 1$. Hence
\begin{align}
\label{n3Est1}
k^{1/2}\normt{\chi_L \tilde{u}}{L^2(\R^2)}\leq Ck^{1/2} \log^{1/2}{(2+1/(kd))}\log^{1/2}{(2+kL)}.
\end{align}
\item 
For the term involving $\normt{(\pdonetext{\chi_L}{y_j})\tilde{u}}{L^2(\R^2)}$ we estimate
\begin{align*}
\label{}
\norm{\pdone{\chi_L}{y_j} \tilde{u}}{L^2(\R^2)}^2 
&= \int_{F_{L\eps}\setminus F} \left|\pdone{\chi_L}{y_j} \tilde{u}\right|^2
\leq \frac{C(L+2d)}{L\eps}\sup_{\tby\in F_{L\eps}\setminus F} \frac{1}{16\pi^2 r(\tby)^2}
\leq \frac{C(1+L/d)}{dL\eps}.
\end{align*}
Using \rf{FracEst} we obtain
\begin{align}
\label{n3Est2}
k^{-1/2}\norm{\pdone{\chi_L}{y_j} \tilde{u}}{L^2(\R^2)}
\leq Ck^{1/2}\left(1+\frac{1}{\sqrt{kL}}\right)\left(1+ \frac{1}{(kd)^{3/2}}\right)\left(1+\sqrt{kL}\right).
\end{align}
\item
For the term involving $\normt{\chi_L(\pdonetext{\tilde{u}}{y_j})}{L^2(\R^2)}$ 
we first note that $|(\pdonetext{\tilde{u}}{y_j})(\tby)|\leq (1+kr(\tby))/(4\pi r(\tby)^2)$. Then %
\footnote{This estimate is `worse than the worse case': it puts the source a distance $d/2$ above the midpoint of the set $F_{L\eps}$.}
\begin{align*}
\label{}
\norm{\chi_L \pdone{\tilde{u}}{y_j}}{L^2(\R)}^2 
\leq \int_{F_{L\eps}}\left|\pdone{\tilde{u}}{y_j}\right|^2
&\leq Ck^2\int_0^{l} \frac{\left(1+\sqrt{r^2+\tilde{d}^2}\right)^2 r \,\rd r}{(r^2+\tilde{d}^2)^2}\\
&\leq Ck^2\left(1+l+\tilde{d}\right)^2 \int_0^{l} \frac{r \,\rd r}{(r^2+\tilde{d}^2)^2}\\
&= Ck^2 \frac{\left(1+l+\tilde{d}\right)^2}{2\tilde{d}^2}\frac{l^2}{l^2+\tilde{d}^2}\\
&\leq Ck^2 \frac{\left(1+l+\tilde{d}\right)^2}{2\tilde{d}^2},
\end{align*}
where $l=kL(1/2+\eps)$ and $\tilde{d}=kd/2$. 
From this we find, recalling that $\eps\leq 1$, that
\begin{align}
\label{n3Est3}
k^{1/2}\norm{\chi_L \pdone{\tilde{u}}{y_j}}{L^2(\R^2)} 
\leq Ck^{1/2}\left(1+ \frac{1}{kd}\right)\left(1+ kL\right).
\end{align}
\end{itemize}

Finally, using \rf{n3Est1}, \rf{n3Est2} and \rf{n3Est3} in \rf{uChiEst3D} gives the required estimate \rf{eqn:H_one_half_Phi}.
\end{proof}

As an application we use Lemma \ref{lem:H_one_half_norms} to prove a $k$-explicit pointwise bound on the solution of the sound-soft screen scattering problem considered in Example \ref{ex:scattering}. 
\begin{cor}
\label{cor:BoundOnSolnInDomain}
The solution $u$ of problem $\sD''$, with $g_{\sD}=-u^i|_\Gamma$, satisfies the pointwise bound
\begin{align*}
\label{}
|u(\bx)|\leq 
\begin{cases}
 C\left(1+ \frac{1}{\sqrt{kL}}\right)\left(1+ \frac{1}{(kd)^{3/2}}\right)\left(1+(kL)^2\right), & n=3,\\
 C\left(1+\frac{1}{\sqrt{kL}}\right)\left(1+\frac{1}{\sqrt{kd}}\right)\log\left(2+\frac{1}{kd}\right)\log^{1/2}(2+kL)(1+\sqrt{kL}), & n=2,
\end{cases}
\end{align*}
where $\bx\in D$, $d:=\dist(\bx,\Gamma)$, $L:=\diam{\Gamma}$, and $C>0$ is independent of $k$ and $\Gamma$. 
\end{cor}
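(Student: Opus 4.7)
The plan is to combine the integral representation from Theorem \ref{DirExUn}, the coercivity estimate \rf{SCoercive} for the single-layer operator, and the two bounds collected in Lemma \ref{lem:H_one_half_norms}.

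By Theorem \ref{DirExUn}, the unique solution of problem $\sD''$ is $u=-\cS_k\phi$, where $\phi\in\tilde H^{-1/2}(\Gamma)$ is the unique solution of $-S_k\phi=g_\sD$, i.e.\ $S_k\phi=u^i|_\Gamma$. For fixed $\bx\in D$ the function $\Phi_k(\bx,\cdot)$ restricted to $\Gamma$ lies in $H^{1/2}(\Gamma)$, and the definition of $\cS_k$ together with the duality $(\tilde H^{-1/2}(\Gamma))^*\cong H^{1/2}(\Gamma)$ provided by Theorem \ref{DualityTheorem} allow us to write
$$u(\bx)=-\langle \Phi_k(\bx,\cdot)|_\Gamma,\,\overline\phi\rangle_{H^{1/2}(\Gamma)\times\tilde H^{-1/2}(\Gamma)},$$
so that
$$|u(\bx)|\leq\|\Phi_k(\bx,\cdot)\|_{H^{1/2}_k(\Gamma)}\,\|\phi\|_{\tilde H^{-1/2}_k(\Gamma)}.$$

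To control the density I would apply the coercivity estimate \rf{SCoercive}: since $S_k\phi=u^i|_\Gamma$,
$$\tfrac{1}{2\sqrt 2}\|\phi\|_{\tilde H^{-1/2}_k(\Gamma)}^2\leq |\langle S_k\phi,\phi\rangle|=|\langle u^i|_\Gamma,\phi\rangle|\leq \|u^i\|_{H^{1/2}_k(\Gamma)}\,\|\phi\|_{\tilde H^{-1/2}_k(\Gamma)},$$
whence $\|\phi\|_{\tilde H^{-1/2}_k(\Gamma)}\leq 2\sqrt 2\,\|u^i\|_{H^{1/2}_k(\Gamma)}$. Applying Lemma \ref{lem:H_one_half_norms}(i) termwise in the sum \rf{uiDef} (absorbing $N$ and $\max_j|a_j|$ into the constant) yields $\|u^i\|_{H^{1/2}_k(\Gamma)}\leq CL^{(n-2)/2}(1+\sqrt{kL})$.

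Finally, combining this with the bound for $\|\Phi_k(\bx,\cdot)\|_{H^{1/2}_k(\Gamma)}$ supplied by Lemma \ref{lem:H_one_half_norms}(ii), the two cases of the claimed inequality reduce to elementary algebraic manipulations. In $n=2$ the product of the two bounds is already in the stated form; in $n=3$ one rewrites $L^{1/2}(1+\sqrt{kL})=k^{-1/2}\sqrt{kL}(1+\sqrt{kL})$ and uses the elementary inequalities $(1+\sqrt{kL})^2\leq 2(1+kL)$ and $(1+kL)^2\leq 2(1+(kL)^2)$ to recover the stated form. There is no real analytic obstacle once Lemma \ref{lem:H_one_half_norms} and Theorem \ref{ThmCoerc} are in hand; the only point requiring care is the bookkeeping of the $k$, $L$ and $d$ dependencies, in particular the absorption of the $L^{(n-2)/2}$ factor coming from the data into the $kL$-factors of the final bound.
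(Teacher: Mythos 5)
Your proof is correct and follows essentially the same route as the paper's: write $u(\bx)$ as a duality pairing of $\Phi_k(\bx,\cdot)$ against the BIE density, bound the density via coercivity of $S_k$ (equivalently, the Lax--Milgram bound $\|S_k^{-1}\|\leq 2\sqrt 2$ used in the paper), and then invoke the two parts of Lemma~\ref{lem:H_one_half_norms}. The only cosmetic difference is that the paper cites Theorem~\ref{DirEquivThm} and phrases the density bound in terms of $\|S_k^{-1}\|$ rather than unwinding it from \rf{SCoercive} directly; your algebraic reduction in the $n=3$ case (absorbing $L^{1/2}$ as $k^{-1/2}\sqrt{kL}$ and using $(1+\sqrt{kL})^2\leq 2(1+kL)$, $(1+kL)^2\leq 2(1+(kL)^2)$) is a legitimate filling-in of a step the paper leaves implicit.
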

\begin{proof}
Using Theorem \ref{DirEquivThm} we can estimate
\begin{align*}
\label{}
|u(\bx)| 
= \left| \cS_k \left[\pdonetext{u}{\bn}\right](\bx)\right| 
&= \left|\langle \Phi_k(\bx,\cdot),\overline{\left[\pdonetext{u}{\bn}\right]} \rangle_{H^{1/2}(\Gamma)\times \tilde{H}^{-1/2}(\Gamma)}\right|\\
&\leq \norm{\Phi_k(\bx,\cdot)}{H^{1/2}(\Gamma)}\norm{S_k^{-1}}{H^{1/2}(\Gamma)\to\tilde{H}^{-1/2}(\Gamma)}\norm{u^i|_\Gamma}{H^{1/2}(\Gamma)},
\end{align*}
and the result follows from applying Lemma \ref{lem:H_one_half_norms} to estimate the first and third factors, and using Theorem \ref{ThmCoerc} (and the Lax Milgram lemma) to bound $\norm{S_k^{-1}}{H^{1/2}(\Gamma)\to\tilde{H}^{-1/2}(\Gamma)}\leq 2\sqrt{2}$.
\end{proof}

\section{Aperture problems}
\label{ScatProbHole}
In this section we show how the analysis of \S\ref{ScatProb}, where we studied scattering by bounded screens, can be modified to the complementary case of scattering by unbounded screens with bounded apertures. As in \S\ref{ScatProb} we let $\Gamma$ be a bounded and relatively open non-empty subset of $\Gamma_\infty:=\{ \bx\in\mathbb{R}^n:x_n=0\}$, but we now consider $\Gamma_\infty\setminus\overline{\Gamma}$ as the scatterer, with $\Gamma$ representing the aperture, so that the propagation domain is now $D':=U^+\cup U^-\cup \Gamma$.

With $W$ defined as in \S\ref{subsec:FuncSpacesTrace}, for $u\in W$ we define %
\begin{align*}
\label{}
\llbracket u \rrbracket &:=\gamma^+(\chi u)|_\Gamma-\gamma^-(\chi u)|_\Gamma\in H^{1/2}(\Gamma),\\
\llbracket \pdonetext{u}{\bn} \rrbracket &:=\partial^+_\bn(\chi u)|_\Gamma-\partial^-_\bn(\chi u)|_\Gamma\in H^{-1/2}(\Gamma),
\end{align*}
where $\chi$ is any element of $\scrD_{1,\Gamma}(\R^n)$. 
Further, for $u\in W$ satisfying
\begin{align}
\label{eqn:sumcond1}
\gamma^+(\chi u)|_{\Gamma_\infty\setminus\overline\Gamma} + \gamma^+(\chi u)|_{\Gamma_\infty\setminus\overline\Gamma} = 0, \quad \mbox{for all } \chi\in\scrD(\R^n),
\end{align}
we define
\begin{align*}
\label{}
\sumpm{u}&:=\gamma^+(\chi u)+\gamma^-(\chi u)\in H^{1/2}_{\overline{\Gamma}}
\end{align*}
where $\chi$ is any element of $\scrD_{1,\Gamma}(\R^n)$. For $u\in W$ satisfying
\begin{align}
\label{eqn:sumcond2}
\partial_\bn^+(\chi u)|_{\Gamma_\infty\setminus\overline\Gamma}+ \partial_\bn^+(\chi u)|_{\Gamma_\infty\setminus\overline\Gamma} = 0, \quad \mbox{for all } \chi\in\scrD(\R^n),
\end{align}
we define
\begin{align*}
\label{}
\sumpm{\pdonetext{u}{\bn}}&:=\partial^+_\bn(\chi u)+\partial^-_\bn(\chi u)\in H^{-1/2}_{\overline{\Gamma}},
\end{align*}
where $\chi$ is any element of $\scrD_{1,\Gamma}(\R^n)$. 

We then have the following version of Green's representation theorem for an aperture:
\begin{thm}%
\label{GreenRepThmAperture}
Let $u\in C^2(U^\pm)\cap W$ satisfy \rf{eqn:sumcond1} and \rf{eqn:sumcond2} with $(\Delta + k^2)u=0$ in $U^\pm$ and suppose that $u$ satisfies the Sommerfeld radiation condition at infinity in $U^\pm$. Then
\begin{align}
\label{RepThmGeneralAperture}
u(\bx) =\mp\cS_k\sumpm{\pdonetext{u}{\bn}}(\bx) \pm \cD_k\sumpm{u}(\bx), \qquad\bx\in U^\pm.
\end{align}
\end{thm}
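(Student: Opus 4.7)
The plan is to mimic the proof of Theorem \ref{GreenRepThm}, but to combine the contributions from the two half-spaces $U^+$ and $U^-$ by subtraction rather than addition, so that Dirichlet and Neumann \emph{sums}, rather than jumps, appear on the right-hand side.

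Fix $\bx\in U^+$ (the case $\bx\in U^-$ being entirely analogous) and choose $R>0$ large enough that $\overline{\Gamma}\cup\{\bx\}\subset B_R(0)$. Set $U^\pm_R:=U^\pm\cap B_R(0)$, which are bounded Lipschitz domains. In each of these, apply the standard Green's representation theorem \cite[Theorem 2.20]{ChGrLaSp:11} (valid in $W^1(U^\pm_R;\Delta)$, with traces understood in the duality sense of \S\ref{subsec:FuncSpacesTrace}). Since $\bx\in U^+_R\setminus\overline{U^-_R}$, this gives $u(\bx)$ as a sum of layer potentials over $\partial U^+_R$, and $0$ as a sum of layer potentials over $\partial U^-_R$. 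On the flat portion $\Gamma_\infty\cap B_R$ the outward normals to $U^+_R$ and $U^-_R$ are $-\bn$ and $+\bn$ respectively, so subtracting the two identities causes the single- and double-layer contributions on this portion to pair up into $\gamma^+(\chi u)+\gamma^-(\chi u)$ and $\partial^+_\bn(\chi u)+\partial^-_\bn(\chi u)$ instead of the usual jumps.

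By the hypotheses \rf{eqn:sumcond1} and \rf{eqn:sumcond2}, these sums are supported in $\overline\Gamma$, i.e.\ they lie in $H^{1/2}_{\overline\Gamma}$ and $H^{-1/2}_{\overline\Gamma}$ respectively, and so constitute admissible densities for the single- and double-layer potentials $\cS_k$ and $\cD_k$ as defined in \S\ref{ScatProb}. It remains to let $R\to\infty$: the contributions from the two hemispherical caps $\partial B_R(0)\cap\overline{U^\pm}$ vanish in the limit by the Sommerfeld radiation condition imposed separately in $U^+$ and $U^-$ (the standard argument, as in \cite{CoKr:83}, applies to each half-space). Combining, one obtains
\begin{align*}
u(\bx) = -\cS_k\sumpm{\pdonetext{u}{\bn}}(\bx) + \cD_k\sumpm{u}(\bx),\qquad \bx\in U^+,
\end{align*}
which is \rf{RepThmGeneralAperture} with the upper sign. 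Repeating the argument with the roles of $U^+$ and $U^-$ exchanged yields the lower-sign version for $\bx\in U^-$.

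The main technical point to check is that after subtraction the flat-boundary integrals really do coincide with the duality-pairing definitions of $\cS_k$ and $\cD_k$ on $H^{\pm 1/2}_{\overline\Gamma}$; this is exactly the same identification that is carried out in the proof of Theorem \ref{GreenRepThm}, the only change being the sign of the linear combination, so no new analytical obstacle arises. In particular, no hypothesis on the regularity of $\partial\Gamma$ is used: the argument relies only on Lipschitz regularity of $\partial U^\pm_R$ and on the trace machinery of \S\ref{subsec:FuncSpacesTrace}.
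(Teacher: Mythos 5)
Your proof is correct and takes essentially the same route as the paper: the paper's proof of Theorem \ref{GreenRepThmAperture} simply points to the proof of Theorem \ref{GreenRepThm}, replacing the sum of the two half-space representation formulas by their difference and noting that the cancellation on $\Gamma_\infty\setminus\overline\Gamma$ now follows from \rf{eqn:sumcond1}--\rf{eqn:sumcond2} rather than from smoothness of $u$ across that set. You have merely spelled out the sign bookkeeping and the identification of the resulting densities in $H^{\pm 1/2}_{\overline\Gamma}$ in a bit more detail.
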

\begin{proof}
Follows the proof of Theorem \ref{GreenRepThm}, except one takes the \emph{difference} rather than the sum of the two representation formulas obtained in the domains $U^\pm_R$. That the contributions from $\Gamma_\infty\cap B_R(0)\setminus\overline{\Gamma}$ cancel follows from the assumption that \rf{eqn:sumcond1} and \rf{eqn:sumcond2} hold.
\end{proof}

We will consider the following two BVPs, formulated by imposing transmission conditions across the aperture $\Gamma$.
\begin{defn}[Problem $\sH$]
Given $g_{\sH}\in H^{-1/2}(\Gamma)$, find $u\in C^2\left(U^\pm\right)\cap W$ such that
\begin{align}
 \Delta u+k^2u & =  0, \quad \mbox{in }U^\pm, \label{eqn:HE3} \\
  u &= 0, \quad \mbox{on }\Gamma_\infty\setminus\overline{\Gamma}, \label{eqn:bc3a}\\
   \llbracket \pdonetext{u}{\bn} \rrbracket  & = g_{\sH}, \label{eqn:bc3b}\\
     [u] & = 0, \label{eqn:bc3c}\\
 \sumpm{u} &\in \tilde H^{1/2}(\Gamma), \label{eqn:bc3d}
\end{align}
and $u$ satisfies the Sommerfeld radiation condition.
\end{defn}
By \rf{eqn:bc3a} we mean, precisely, that $\gamma^\pm (\chi u)\vert_{\Gamma_\infty\setminus\overline{\Gamma}}=0$, where $\chi$ is any element of
$\scrD(\R^n)$.
\begin{defn}[Problem $\sI$]
Given $g_{\sI}\in H^{1/2}(\Gamma)$, find $u\in C^2\left(U^\pm\right)\cap W$ such that
\begin{align}
 \Delta u+k^2u & =  0, \quad \mbox{in }U^\pm, \label{eqn:HE4} \\
  \pdone{u}{\bn}  &= 0, \quad \mbox{on }\Gamma_\infty\setminus\overline{\Gamma}, \label{eqn:bc4a}\\
      \llbracket u \rrbracket  & = g_{\sI}, \label{eqn:bc4b}\\
[\pdonetext{u}{\bn} ] & = 0, \label{eqn:bc4c}\\
 \sumpm{\pdonetext{u}{\bn}} &\in \tilde H^{-1/2}(\Gamma), \label{eqn:bc4d}
\end{align}
and $u$ satisfies the Sommerfeld radiation condition.
\end{defn}
By \rf{eqn:bc4a} we mean, precisely, that $\partial_\bn^\pm (\chi u)\vert_{\Gamma_\infty\setminus\overline{\Gamma}}=0$, where $\chi$ is any element of
$\scrD(\R^n)$.

\begin{example}
Consider the aperture problem of scattering by $\Gamma_\infty\setminus\overline{\Gamma}$ of a plane wave
\begin{align}
\label{uiDefAp}
  u^i(\bx)&:=\re^{\ri  k \bx\cdot \bd}, \qquad \bx\in U^+,%
\end{align}
incident from $U^+$, where
$\bd=(\tbd,d_n)\in\mathbb{R}^n$ is a unit direction vector with $\tbd\in\R^{n-1}$ and $d_n<0$. The cases of a `sound soft' and a `sound hard' screen are modelled respectively by problem $\sH$ (with $g_{\sH}=-2\pdonetext{u^i}{\bn}|_\Gamma$) and problem $\sI$ (with $g_{\sI}=-2u^i|_\Gamma$). In both cases $u$ represents the diffracted field, the total field being given by
\begin{align}
\label{}
u^{\rm tot} =
\begin{cases}
u + u^i + u^r, & \mbox{in }U^+,\\
u , & \mbox{in }U^-,\\
\end{cases}
\end{align}
where the reflected wave $u^r(\bx):=c\re^{\ri  k \bx\cdot \bd'}$, where $\bd'=(\tbd,-d_n)$ and $c=-1$ for the sound soft case and $c=1$ for the sound hard case.
\end{example}

Using Theorems \ref{LayerPotRegThm} and \ref{GreenRepThmAperture}, it is straightforward to show that the BVPs $\sH$ and $\sI$ are equivalent to the BIEs \rf{BIE_hyp} and \rf{BIE_sl} respectively. Conditions \rf{eqn:bc3d} and \rf{eqn:bc4d} combined with Theorems \ref{ThmSk} and \ref{ThmTk} then imply that problems $\sH$ and $\sI$ are uniquely solvable. Although we do not provide full details here, we note that the conditions \rf{eqn:bc3c} and \rf{eqn:bc4c} in problems $\sH$ and $\sI$ are required to ensure that \rf{eqn:sumcond1} and \rf{eqn:sumcond2} hold for both problems, so that the representation theorem is valid. Specifically, if $u$ is a solution of problem $\sH$ then set $w(\bx):=u(\bx)-u(\bx')$, where $\bx'$ denotes the reflection of $\bx$ in $\Gamma_\infty$. From \rf{eqn:bc3a} and \rf{eqn:bc3c} and the uniqueness of the solution of the Helmholtz equation in a half-space with Dirichlet boundary conditions it follows that $w=0$, i.e.\ that $u(\bx)=u(\bx')$, which implies \rf{eqn:sumcond2}. Similarly, one can prove that if $u$ is a solution of problem $\sI$ then $u(\bx)=-u(\bx')$, which implies \rf{eqn:sumcond1}. These results are summarised in the following theorems.
\begin{thm}
\label{DirHoleExUn}
For any $g_{\sH}\in H^{-1/2}(\Gamma)$ problem $\sH$ has a unique solution given by formula 
\begin{align*}
\label{}
 u(\bx ) = \pm \cD_k\sumpm{u}(\bx), \qquad\bx\in U^\pm,
\end{align*}
where $\sumpm{u}$ is the unique solution in $\tilde H^{1/2}(\Gamma)$ of equation \rf{BIE_hyp} with $g_{\sN}=g_{\sH}/2$.
\end{thm}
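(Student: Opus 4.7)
The plan is to establish equivalence between problem $\sH$ and the hypersingular BIE \rf{BIE_hyp} with $g_{\sN} = g_{\sH}/2$, and then to invoke Theorem \ref{ThmTk} (together with Theorem \ref{DualityTheorem} and the Lax--Milgram lemma) for unique solvability of that BIE in $\tilde{H}^{1/2}(\Gamma)$.

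For existence, I would let $\psi\in\tilde{H}^{1/2}(\Gamma)$ be the unique solution of $T_k\psi=g_{\sH}/2$, guaranteed by coercivity of $T_k\colon\tilde{H}^{1/2}(\Gamma)\to H^{-1/2}(\Gamma)\cong(\tilde{H}^{1/2}(\Gamma))^*$, and define $u:=\pm\cD_k\psi$ in $U^\pm$. Regularity, the Helmholtz equation, and the Sommerfeld radiation condition then follow from Theorem \ref{LayerPotRegThm}(i)--(ii), since $\tilde{H}^{1/2}(\Gamma)\subset H^{1/2}_{\overline\Gamma}$. The jump relations \rf{JumpRelns3}--\rf{JumpRelns4} yield $\gamma^\pm u=\psi/2$, so $[u]=0$ and $\sumpm{u}=\psi\in\tilde{H}^{1/2}(\Gamma)$; since $\supp\psi\subset\overline\Gamma$, condition \rf{eqn:bc3a} also holds. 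Applying the normal derivative to the two sign conventions and using \rf{JumpRelns4} to identify the one-sided traces, $\llbracket\pdonetext{u}{\bn}\rrbracket=\partial^+_\bn\cD_k\psi+\partial^-_\bn\cD_k\psi=2T_k\psi=g_{\sH}$ on $\Gamma$, verifying \rf{eqn:bc3b}.

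For uniqueness, suppose $u$ solves problem $\sH$ with $g_{\sH}=0$. The first task is to verify the antisymmetry conditions \rf{eqn:sumcond1}--\rf{eqn:sumcond2} so that Theorem \ref{GreenRepThmAperture} is applicable. Condition \rf{eqn:sumcond1} is immediate from \rf{eqn:bc3a}. For \rf{eqn:sumcond2} I would invoke the reflection argument already sketched in the discussion following the statements of problems $\sH$ and $\sI$: setting $w(\bx):=u(\bx)-u(\bx')$ with $\bx'$ the reflection of $\bx$ in $\Gamma_\infty$, conditions \rf{eqn:bc3a} and \rf{eqn:bc3c} imply $w$ has vanishing Dirichlet trace on $\Gamma_\infty$, and $w$ still satisfies the Helmholtz equation in $U^+$ and the Sommerfeld radiation condition, so by uniqueness for the half-space Dirichlet problem $w\equiv 0$; hence $u(\bx)=u(\bx')$ and $\partial^+_\bn u=-\partial^-_\bn u$ on $\Gamma_\infty$, giving \rf{eqn:sumcond2}. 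The representation formula \rf{RepThmGeneralAperture} then collapses, because $\sumpm{\pdonetext{u}{\bn}}=0$, to $u=\pm\cD_k\sumpm{u}$ in $U^\pm$. Taking normal traces on $\Gamma$ and using \rf{JumpRelns4}, $\llbracket\pdonetext{u}{\bn}\rrbracket=2T_k\sumpm{u}=0$, and since $\sumpm{u}\in\tilde{H}^{1/2}(\Gamma)$ by \rf{eqn:bc3d}, the injectivity of $T_k$ on $\tilde{H}^{1/2}(\Gamma)$ (again from Theorem \ref{ThmTk} via Lax--Milgram) forces $\sumpm{u}=0$, whence $u\equiv 0$.

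The main technical point is the verification of \rf{eqn:sumcond2} for the homogeneous solution; without it the representation theorem is unavailable, and one cannot reduce the BVP to the BIE in the reverse direction. The reflection trick resolves this, but it crucially requires both \rf{eqn:bc3a} and \rf{eqn:bc3c} as built-in constraints in the formulation of problem $\sH$, which is exactly why the definition of $\sH$ is structured as it is. Beyond this, the proof is mostly bookkeeping with the jump relations and an appeal to Theorem \ref{ThmTk}.
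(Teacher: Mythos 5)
Your proof is correct and follows essentially the same approach as the paper, which does not supply a formal proof of Theorem~\ref{DirHoleExUn} but instead sketches exactly this argument in the paragraph preceding it (equivalence of $\sH$ with the hypersingular BIE via Theorems~\ref{LayerPotRegThm} and \ref{GreenRepThmAperture}, the reflection trick to obtain \rf{eqn:sumcond2} from \rf{eqn:bc3a} and \rf{eqn:bc3c}, and unique solvability from Theorem~\ref{ThmTk}). You have simply filled in the jump-relation bookkeeping that the paper leaves to the reader.
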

\begin{thm}
\label{NeuHoleExUn}
For any $g_{\sI}\in H^{1/2}(\Gamma)$ problem $\sI$ has a unique solution given by formula 
\begin{align*}
\label{}
u(\bx )=\mp \cS_k\sumpm{\pdonetext{u}{\bn}}(\bx), \qquad\bx\in U^\pm,
\end{align*}
where $\sumpm{\pdonetext{u}{\bn}}$ is the unique solution in $\tilde H^{-1/2}(\Gamma)$ of equation \rf{BIE_sl} with $g_\sD=g_\sI/2$.
\end{thm}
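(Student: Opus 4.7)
The plan is to follow the strategy indicated in the discussion preceding the theorem, establishing the equivalence of problem $\sI$ with the single-layer BIE \rf{BIE_sl} (with right-hand side $g_{\sD}=g_{\sI}/2$), whose unique solvability in $\tilde H^{-1/2}(\Gamma)$ has already been secured via Theorem \ref{ThmCoerc} together with the Lax--Milgram lemma.

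First, I would prove that every solution $u$ of $\sI$ is odd under reflection in $\Gamma_\infty$, i.e.\ $u(\bx)=-u(\bx')$, where $\bx'=(\tbx,-x_n)$. Setting $w(\bx):=u(\bx)+u(\bx')$ for $\bx\in U^+$ produces a function lying in $W$ that solves the Helmholtz equation in $U^+$ and satisfies the Sommerfeld radiation condition. Since reflection reverses the sign of $\partial/\partial x_n$, the Neumann trace of $w$ from $U^+$ on $\Gamma_\infty$ equals $\partial^+_\bn(\chi u)-\partial^-_\bn(\chi u)=[\pdonetext{u}{\bn}]$, which vanishes on all of $\Gamma_\infty$ by the boundary condition \rf{eqn:bc4a} and the jump condition \rf{eqn:bc4c}. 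Uniqueness for the homogeneous Neumann problem in a half-space then gives $w\equiv 0$; repeating the argument in $U^-$ and taking Dirichlet traces yields $\gamma^+u+\gamma^-u=0$ on $\Gamma_\infty$, so that the compatibility conditions \rf{eqn:sumcond1} and \rf{eqn:sumcond2} both hold (the latter is immediate from \rf{eqn:bc4a}), and in particular $\sumpm{u}=0$.

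Second, the representation formula \rf{RepThmGeneralAperture} then collapses to $u(\bx)=\mp\cS_k\phi(\bx)$ in $U^\pm$, where $\phi:=\sumpm{\pdonetext{u}{\bn}}\in\tilde H^{-1/2}(\Gamma)$ by \rf{eqn:bc4d}. Taking the Dirichlet trace from $U^+$ on $\Gamma$ and combining the identity $\gamma^+u|_\Gamma=g_{\sI}/2$ (which follows from $\llbracket u\rrbracket=g_{\sI}$ and $\gamma^+u+\gamma^-u=0$) with the definition $\gamma^+(\chi\cS_k\phi)|_\Gamma=S_k\phi$ yields $-S_k\phi=g_{\sI}/2$, i.e.\ $\phi$ solves \rf{BIE_sl} with $g_{\sD}=g_{\sI}/2$. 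Since $S_k:\tilde H^{-1/2}(\Gamma)\to H^{1/2}(\Gamma)$ is invertible by Theorem \ref{ThmSk} and Lax--Milgram, $\phi$ is uniquely determined and hence so is $u$. Conversely, given the unique such $\phi\in\tilde H^{-1/2}(\Gamma)$, I define $u:=\mp\cS_k\phi$ in $U^\pm$ and verify all clauses of $\sI$ using Theorem \ref{LayerPotRegThm}: the symmetry of $\Phi(\bx,\by)$ in $x_n\mapsto -x_n$ for $\by\in\Gamma_\infty$ makes $u$ odd by construction; the jump relations \rf{JumpRelns1}--\rf{JumpRelns2} combined with $\supp\phi\subset\overline\Gamma$ deliver \rf{eqn:bc4a}, \rf{eqn:bc4c} and the membership \rf{eqn:bc4d} (with $\sumpm{\pdonetext{u}{\bn}}=\phi$); and \rf{eqn:bc4b} follows from $\llbracket u\rrbracket=-2S_k\phi=g_{\sI}$.

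The main obstacle I anticipate lies in the reflection argument of the first step: namely, rigorously justifying the half-space Neumann uniqueness result for $w$ within the low-regularity class $W$ (with $L^2_{\mathrm{loc}}$ Laplacian and Neumann trace interpreted in $H^{-1/2}(\Gamma_\infty)$), and checking that the screen-and-aperture hypotheses \rf{eqn:bc4a} and \rf{eqn:bc4c} really do combine to give a globally vanishing Neumann trace of $w$ on $\Gamma_\infty$ as a distribution, so that the standard half-space uniqueness theorem can be applied without any smoothness assumption on $\partial\Gamma$.
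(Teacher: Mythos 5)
Your proof is correct and takes essentially the same route as the paper's (which explicitly states that full details are omitted): you establish the oddness $u(\bx)=-u(\bx')$ via the reflection argument using \rf{eqn:bc4a}, \rf{eqn:bc4c}, and half-space Neumann uniqueness; this makes $\sumpm{u}=0$ so that Theorem \ref{GreenRepThmAperture} collapses to the single-layer representation, and you then invoke coercivity of $S_k$ and Lax--Milgram exactly as the paper does for Theorem \ref{ThmSk}. Your worry about the low-regularity half-space uniqueness step is a fair one, but the paper itself relies on the same fact without further comment, so your argument is faithful to the intended proof.
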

Note in particular that while the scattering problem for a bounded sound-soft screen requires the solution of the single-layer BIE \rf{BIE_sl} on the screen, the problem for an aperture in a sound-soft screen requires the solution of the hypersingular BIE \rf{BIE_hyp} on the aperture.

\section{Acknowledgements}
The authors are grateful to A.\ Moiola for many stimulating discussions in relation to this work.
\bibliography{BEMbib_short}%
\bibliographystyle{siam}
\end{document}